\setlist[enumerate]{label=\emph{(\roman*)}}
\newtheorem{theorem}{Theorem}[section]
\newtheorem{corollary}[theorem]{Corollary}
\newtheorem{lemma}[theorem]{Lemma}
\newtheorem{proposition}[theorem]{Proposition}
\theoremstyle{definition}
\newtheorem{remark}[theorem]{Remark}
\numberwithin{equation}{section}
\newcommand{\R}{\mathbb{R}}
\def \C {{\mathbb{C}}}
\def \d {{\rm{d}}}
\def \pt{\partial_{t}}
\def \bchar{{\boldsymbol{1}}}
\def \cint{{\mathcal{C}_{\rm{int}}}}
\def \cext{{\mathcal{C}_{\rm{ext}}}}
\begin{document}

\parindent=0pt

	\title[Global solution of 2D DKG systems]
	{Global behavior of small data solutions for the 2D Dirac--Klein-Gordon equations}
	
		\author[S.~Dong]{Shijie Dong}
	\address{Southern University of Science and Technology, SUSTech International Center for Mathematics, and Department of Mathematics, 518055 Shenzhen, P.R. China.}
	\email{dongsj@sustech.edu.cn, shijiedong1991@hotmail.com}

	\author[K.~Li]{Kuijie Li}
	\address{Nankai University, School of Mathematical Sciences and LPMC, Tianjin, 300071, P.R. China.}
	\email{kuijiel@nankai.edu.cn}
	
		\author[Y.~Ma]{Yue Ma}
	\address{Xi'an Jiaotong University, School of Mathematics and Statistics, 28 West Xianning Road, Xi'an Shaanxi 710049, P.R. China.}
	\email{yuemath@xjtu.edu.cn}

	\author[X.~Yuan]{Xu Yuan}
	\address{Department of Mathematics, The Chinese University of Hong Kong, Shatin, N.T., Hong Kong, P.R. China.}
	\email{xu.yuan@cuhk.edu.hk}
	\begin{abstract}
    In this paper, we are interested in the two-dimensional Dirac--Klein-Gordon system, which is a basic model in particle physics. We investigate the global behaviors of small data solutions to this system in the case of a massive scalar field and a massless Dirac field. More precisely, our main result is twofold: 1) we show sharp time decay for the pointwise estimates of the solutions which imply the asymptotic stability of this system;  2) we show the linear scattering result of this system which is a fundamental problem when it is viewed as dispersive equations. 
    Our result is valid for general small, high-regular initial data, in particular, there is no restriction on the support of the initial data.
	\end{abstract}
	\maketitle
\section{Introduction}

\subsection{Main result}

The Dirac--Klein-Gordon system is a basic model in particle physics, describing a scalar field and a Dirac field through Yukawa interactions. In this paper, we study the two-dimensional Dirac--Klein-Gordon system with a massive scalar field and a massless Dirac field.
The model of interest takes the following mathematical formulation
\begin{equation}\label{equ:DKG}
\left\{\begin{aligned}
-i\gamma^{\mu}\partial_{\mu}\psi&=v\psi,\quad \quad \ \ (t,x)\in [0,\infty)\times \R^{2},\\
-\Box v+v&=\psi^{*}\gamma^{0}\psi,\quad (t,x)\in [0,\infty)\times \R^{2}.
\end{aligned}\right.
\end{equation}
The initial data are prescribed on the slice $t=0$
\begin{equation}\label{equ:initial}
\left(\psi,v,\pt v\right)_{|t=0}=\left(\psi_{0},v_{0},v_{1}\right),
\end{equation}
in which $(\psi_{0},v_{0},v_{1}):\mathbb{R}^2 \to \mathbb{C}^2\times \R\times \R$.
The Dirac matrices~$\left\{\gamma^{0},\gamma^{1},\gamma^{2}\right\}$ satisfy the identities
\begin{equation}\label{equ:gamma}
\left\{\gamma^{\mu},\gamma^{\nu} \right\}:=\gamma^\mu \gamma^\nu + \gamma^\nu \gamma^\mu = -2\eta_{\mu\nu}I_{2},\quad 
(\gamma^{\mu})^{*}=-\eta_{\mu\nu}\gamma^{\nu},
\end{equation}
in which $A^{*}$ is the conjugate transpose of the matrix $A$, $I_{2}$ is the $2\times2$ identity matrix, and $\eta=\rm{diag}(-1,1,1)$ denotes the Minkowski metric in $\R^{1+2}$. More specifically, Dirac matrices take the explicit expressions
\begin{equation*}
\gamma^{0}=\begin{pmatrix}
1 & 0\\
0 & -1
\end{pmatrix}, \quad 
\gamma^{1}=\begin{pmatrix}
0 & 1\\
-1 & 0
\end{pmatrix},\quad 
\gamma^{2}=\begin{pmatrix}
0 & -i\\
-i & 0
\end{pmatrix}.
\end{equation*}

\smallskip
The local existence of the Cauchy problem \eqref{equ:DKG}--\eqref{equ:initial} can be found in Bournaveas \cite{Boura}, the global existence of the Cauchy problem \eqref{equ:DKG}--\eqref{equ:initial} was proved in Gr\"{u}nrock-Pecher \cite{GH10}, and in both works, the authors considered low regularity and large initial data.  Based on the local and global existence results, a fundamental problem is to understand the global behavior of the solution to the Dirac--Klein-Gordon system. This problem is challenging even in the 3D case where both the scalar and the Dirac fields enjoy faster pointwise decay. For a 3D Dirac--Klein-Gordon system with a massless scalar field and a massive Dirac field, the Cauchy problem was tackled in the pioneering work of Bachelot \cite{Bache}, where the author demonstrated the sharp pointwise decay of the solutions together with a scattering result.

\smallskip
 For the 2D Dirac--Klein-Gordon system, it is regarded to be more difficult as we lose $t^{-1/2}$ decay rate in the pointwise estimates of the solutions. The global behavior is well-understood in the case of a massive scalar field and a massive Dirac field, where the Dirac--Klein-Gordon system can be reduced to a system of coupled Klein-Gordon equations; see the works \cite{SimTaf} by Simon-Taflin and \cite{OTT} by Ozawa-Tsutaya-Tsutsumi.  However, the global behavior of the solutions to the 2D Dirac--Klein-Gordon system \eqref{equ:DKG} was unknown until the recent work \cite{DongWyatt} by Dong-Wyatt. In this work, the authors illustrated the sharp time decay for the solutions in the case of small, high-regular initial data with compact support.

\smallskip
In the present paper, we revisit the 2D Cauchy problem \eqref{equ:DKG}--\eqref{equ:initial}: we prove the global existence, the sharp time decay, and the scattering result of the solutions for all initial data which are small, high-regular, but not necessarily compactly supported. Our main result is formulated as follows.

\begin{theorem}\label{thm:main}
	Let $N\in \mathbb{N}$ with $N\ge 14$ and $\mathcal{X}_{N}=H^{N-1}\times H^{N}\times H^{N-1}$. There exists an $\varepsilon_0>0$ such that for all initial data $\left(\psi_{0},v_{0},v_{1}\right)$ 
	satisfying the smallness condition
	\begin{equation}\label{est:smallness}
	\begin{aligned}
	&\|\psi_{0}\|_{L^{1}}+\sum_{k\le N} \left(\left\|\langle |x|\rangle^{k+1}\nabla_{x}^{k}\psi_{0}\right\|_{L^{2}}
	+\left\|\langle |x|\rangle^{k+2} \log(2+|x|) \nabla_{x}^{k}v_{1}\right\|_{L^{2}}\right)\\
	&+\sum_{k\le N+1}\left\|\langle |x|\rangle^{k+1} \log(2+|x|) \nabla_{x}^{k}v_{0}\right\|_{L^{2}}\le \varepsilon<\varepsilon_0,
	\end{aligned}
	\end{equation}
	the Cauchy problem~\eqref{equ:DKG}-\eqref{equ:initial} admits a global-in-time solution $(\psi,v)$, which enjoys the following sharp time decay estimates
	\begin{equation}\label{est:thmpoint}
	\left|\psi(t,x)\right|\lesssim \varepsilon \langle t\rangle^{-\frac{1}{2}},\quad 
	\left|v(t,x)\right|\lesssim \varepsilon\langle t+|x|\rangle^{-1}.
	\end{equation}
	Moreover, the solution $(\psi,v)$ scatters to a free solution in $\mathcal{X}_{N}$ for $t\to \infty${\rm{:}} there exists $(\psi_{0\ell},v_{0\ell},v_{1\ell})\in \mathcal{X}_{N}$ such that 
	\begin{equation}\label{est:thmsca}
	\begin{aligned}
	\lim_{t\to \infty}\left\|(\psi,v,\pt v)-(\psi_{\ell},v_{\ell},\pt v_{\ell})\right\|_{\mathcal{X}_{N}}=0,
	\end{aligned}
	\end{equation}
	where $(\psi_{\ell},v_{\ell})$ is the solution to the 2D linear homogeneous Dirac--Klein-Gordon system with the initial data $(\psi_{0\ell},v_{0\ell},v_{1\ell})$.
\end{theorem}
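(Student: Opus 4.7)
The plan is to run a continuity/bootstrap argument on constant-$t$ slices in the spirit of the Klainerman vector field method. First, I would introduce the conformal Killing generators $\Gamma\in\{\partial_\alpha, L_j=t\partial_j+x^j\partial_t, \Omega=x^1\partial_2-x^2\partial_1\}$ together with the spinor-adapted boosts $\widehat{L}_j$ carrying a lower-order matrix correction so as to commute with $-i\gamma^\mu\partial_\mu$. Define the top-order energies
\[
E_N^{\rm KG}(t)=\sum_{|I|\le N}\int_{\R^2}\bigl(|\partial\Gamma^I v|^2+|\Gamma^I v|^2\bigr)\,\d x,\qquad E_{N-1}^{\rm D}(t)=\sum_{|I|\le N-1}\int_{\R^2}|\widehat{\Gamma}^I\psi|^2\,\d x,
\]
and make the bootstrap assumption $E_N^{\rm KG}(t)^{1/2}+E_{N-1}^{\rm D}(t)^{1/2}\le C_1\varepsilon\langle t\rangle^{C_1\varepsilon}$, together with the pointwise bounds \eqref{est:thmpoint} at an intermediate order obtained from Klainerman--Sobolev-type inequalities adapted respectively to the 2D wave/Dirac and Klein--Gordon fields. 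The objective is then to improve both constants by a factor of two.

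Next, I would close the two nonlinear source estimates. For the Dirac equation, the control of $v\psi$ is comparatively soft: by the bootstrap pointwise bound $|v|\lesssim\varepsilon\langle t\rangle^{-1}$, one has $\|v\psi\|_{H^{N-1}}\lesssim\varepsilon^2\langle t\rangle^{-1+C_1\varepsilon}$, which is time-integrable and controls the growth of $E_{N-1}^{\rm D}$. The principal technical difficulty sits in the Klein--Gordon source: the naive bound $\|\psi^*\gamma^0\psi\|_{H^N}\lesssim\|\psi\|_{L^\infty}E_{N-1}^{\rm D}(t)^{1/2}\lesssim\varepsilon^2\langle t\rangle^{-1/2+C_1\varepsilon}$ produces a non-integrable growth that would preclude scattering. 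The remedy I expect to use is a combination of the hidden Lorentz-scalar/null structure of the Dirac bilinear---decomposing $\psi$ into its good and bad components with respect to the light cone, and recognizing that the square of the good part is essentially a null-form divergence---together with Alinhac's ghost-weight method $e^{-q(|x|-t)}$, $q(s)=\arctan s$, to absorb the borderline logarithmic loss into a good-derivative flux of $v$.

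The central obstacle distinguishing this paper from \cite{DongWyatt} is the absence of compact support of the data, which in that earlier work allowed a hyperboloidal foliation starting at $t=2$ and obviated the exterior analysis. The hardest step will therefore be the bookkeeping of the exterior contributions: I would use the $\log(2+|x|)$-weighted $L^2$ norms in \eqref{est:smallness} to control a weighted exterior energy that exactly cancels the logarithmic divergence remaining in the KG source estimate, while the $L^1$ assumption on $\psi_0$ is used separately to obtain the sharp $\langle t\rangle^{-1/2}$ decay of the free massless Dirac propagator via the 2D $L^1$--$L^\infty$ dispersive estimate. Finally, the scattering statement \eqref{est:thmsca} follows from Duhamel's principle: once the bootstrap is closed with $C_1\varepsilon$ sufficiently small, the sources $v\psi$ and $\psi^*\gamma^0\psi$ lie in $L^1_t H^{N-1}$, so the backward-in-time linear evolution of $(\psi,v,\pt v)(t)$ is Cauchy in $\mathcal{X}_N$ and converges to asymptotic data $(\psi_{0\ell},v_{0\ell},v_{1\ell})$, as required.
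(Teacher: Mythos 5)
The proposal has a genuine gap at its center. You assert that the Dirac source estimate is ``comparatively soft'' and that $\|v\psi\|_{H^{N-1}}\lesssim\varepsilon^2\langle t\rangle^{-1+C_1\varepsilon}$ follows from $|v|\lesssim\varepsilon\langle t\rangle^{-1}$. This is false: by the Leibniz rule, the worst term puts $N-1$ vector fields on $v$, giving $\|\Gamma^{I}v\|_{L^2}\|\psi\|_{L^\infty}$, and then the $L^\infty$ factor is the Dirac field, whose best uniform pointwise decay in 2D is only $\langle t\rangle^{-1/2}$ (massless Dirac). So the source is merely $O(\langle t\rangle^{-1/2})$ in $L^2_x$, which is not time-integrable, and the bootstrap ansatz $E_{N-1}^{\rm D}(t)^{1/2}\le C_1\varepsilon\langle t\rangle^{C_1\varepsilon}$ cannot be closed by Gronwall. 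The two sources $v\psi$ and $\psi^*\gamma^0\psi$ are equally critical; the paper explicitly flags this as the principal obstacle. Consequently, your scattering step also breaks: since $v\psi$ and $\psi^*\gamma^0\psi$ are \emph{not} in $L^1_tH^{N-1}$, the backward linear evolution of the raw unknowns is not Cauchy in $\mathcal{X}_N$.

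What your scheme is missing, and what the paper builds the whole argument around, is a layer of nonlinear (Shatah-type) transformations: $\widetilde\psi=\psi+i\gamma^\mu\partial_\mu(v\psi)$, $\tilde v=v-\psi^*\gamma^0\psi$, and the auxiliary wave variable $\Psi$ (with $\widetilde\Psi=\Psi-v\psi$) solving $-\Box\Psi=i\gamma^\mu\partial_\mu\psi$. After transformation the new sources are a null form $Q_0(\psi,v)$ plus cubic terms $\mathcal{N}_1,\dots,\mathcal{N}_4$, which \emph{are} in $L^1_tH^{N-1}$; scattering is proved for $(\widetilde\psi,\tilde v)$ and then the correction terms are shown to vanish in $\mathcal{X}_N$. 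Even at the top order $|I|=N$ (where the transformed equations cannot be differentiated further without exceeding $N$ derivatives on $v$), the closure requires the $\phi^*\gamma^0G$ multiplier structure of the Dirac energy identity combined with the algebraic vanishing $[\Phi]_+^*\gamma^0[\Phi]_+=0$ so that at least one good factor $[\psi]_-$ appears under the spacetime integral, to be absorbed by the ghost weight. Your proposal gestures at the good/bad decomposition, but the normal-form layer and the $\phi^*\gamma^0G$-bilinear structure are not refinements one can defer; they are what makes the bootstrap close at all. (Two smaller misreadings: the $L^1$ hypothesis on $\psi_0$ feeds a logarithmic $L^2$-bound for the free 2D wave (Lemma~\ref{le:Lx2wave}) applied to $\widetilde\Psi$, not an $L^1\to L^\infty$ Dirac dispersive estimate; and the $\log(2+|x|)$ weights feed the Georgiev-type pointwise Klein--Gordon estimate via a dyadic summation (Corollary~\ref{coro:KGdecay}), not an exterior energy.)
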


\begin{remark}
In \cite{GH10}, the global existence to the 2D Cauchy problem \eqref{equ:DKG}--\eqref{equ:initial} was first obtained, in which the description of the asymptotic behavior was absent. Very recently in \cite{DongWyatt}, the asymptotic behavior was described for the solution with compactly supported initial data. To the best of our knowledge, our  current result appears to be the first asymptotic behavior result for non-compactly supported initial data.
\end{remark}

\begin{remark}
For the 2D Dirac-Klein-Gordon system \eqref{equ:DKG}, which can be regarded as dispersive equations, it is particularly interesting to investigate its scattering aspect. In view of \eqref{est:thmpoint}, neither of the pointwise decay rates of the solutions is integrable in time. Thus the linear scattering result \eqref{est:thmsca} is not a direct consequence of the pointwise decay. On the other hand, the decay result \eqref{est:thmpoint} is not a consequence of the scattering result either. Nevertheless, we manage to obtain the scattering result by carefully exploring different properties of the scalar field and the Dirac field (see more details in \S\ref{Se:Nonlin}-\ref{Se:Endthm}). To the best of our knowledge, this appears to be the first scattering result for the 2D Dirac--Klein-Gordon system \eqref{equ:DKG}.
\end{remark}

\begin{remark}
The proof for Theorem \ref{thm:main} should be applicable to the 3D Dirac--Klein-Gordon system in the case of a massive scalar field and a massless Dirac field, where both fields gain an extra $t^{-1/2}$ decay rate. This will lead us to the same (even a better) result of pointwise decay and linear scattering of the small data solutions.
\end{remark}

\begin{remark}
In the proof, one key ingredient is the observation that the nonlinear term in the equation of the Dirac field $\psi$ enjoys an extra $KG \times KG$ structure, which originally is of the form $KG \times Dirac$. This structure does not directly appear in the Dirac equation, but does appear when we do the energy estimate (see for instance Lemma \ref{le:EneryDirac}). This observation, together with Lemma \ref{lem:hidden} or \cite[Lemma 3.2]{DongWyatt}, helps us to improve the energy estimate for the massless Dirac field (see more details in Lemma~\ref{le:highest}).
\end{remark}

\subsection{References}
The Dirac--Klein-Gordon system has attracted people's attention for decades, and there exists a large literature on its mathematical study.
The questions of vital importance to the Cauchy problem of this system include but are not limited to: 1) global existence; 2) global behavior, such as pointwise decay and linear scattering, of the solutions when certain conditions are posed on the initial data.

\smallskip
For the 3D case, Chadam and Glassey \cite{ChGl74} first proved global existence for a Dirac--Klein-Gordon system with certain special initial data. Then Choquet-Bruhat \cite{Ch-Br81} showed global existence and scattering for a massless Dirac--Klein-Gordon system. Later on, Bachelot \cite{Bache} proved global existence and showed global behavior of a Dirac--Klein-Gordon system (in the case of a massless scalar field and a massive Dirac field) with small, high-regular, compactly supported initial data, and this work is closely related to our study. 
Recently, Wang \cite{WangX} and Bejenaru-Herr \cite{Bejenaru-Herr} proved global existence and scattering for a Dirac--Klein-Gordon system (with certain restrictions on the mass of the scalar field and the mass of the Dirac field) with low-regular initial data. There also exists the work \cite{DFS-07} as well as others in the 3D case for this system, but we are not going to be exhaustive here.

\smallskip
For the 2D case, Bournaveas \cite{Boura} proved local existence to the system with low-regular initial data motivated by an earlier work of Zheng \cite{ZhengY} on a modified Dirac--Klein-Gordon system. Later on, D’Ancona-Foschi-Selberg \cite{DFS-07-2D} improved this local existence result. After that, Gr\"{u}nrock-Pecher \cite{GH10} first 
showed global existence to the Dirac--Klein-Gordon system for low-regular (and also high-regular) initial data. 
Very recently, Dong-Wyatt \cite{DongWyatt} demonstrated the first asymptotic stability result for system \eqref{equ:DKG} with small, high-regular, and compactly supported initial data. In the proof of~\cite{DongWyatt}, the authors relied on the vector field method and the hyperboloidal foliation of the spacetime (i.e., $s=\sqrt{t^2-|x|^2}=constant$ slices), and in contrast, in the present paper we will use the flat foliation of the spacetime (i.e., $t=constant$ slices). For the 1D case, we would like to draw one's attention to the works \cite{Boura00, ChGl74}.

\smallskip
A model problem closely relevant to our study is the Klein-Gordon--Zakharov system, which can be regarded as semilinear coupled wave and Klein-Gordon equations. In the 3D case, global existence and global behavior were obtained in the pioneering work \cite{OTT95} for small perturbations around $0$; in a recent work \cite{Guo-N-W}, the authors proved linear scattering for solutions in the energy space with radial symmetry. In the 2D case, very recently the pointwise asymptotic behavior of the small smooth solutions to the system was illustrated; see for instance \cite{Dong2006, DM20, DongMa}. It is worth mentioning that, in \cite{DongMa}, the authors obtained a scattering result for the solutions to the Klein-Gordon part of the 2D Klein-Gordon--Zakharov system.

\smallskip
Finally, we want to lead one to some Dirac-related or Klein-Gordon-related works \cite{Bejenaru-Herr2D, Candy-Herr2, Chen-Dirac, DLW, Dong-Li21, DMZ22, Ionescu-P-WKG, Ionescu-P-EKG, Klainerman-WY, PLF-YM-arXiv1, ZangJDE, Stingo}, which are also relevant to our study.

\subsection{Major difficulties}
A direct problem we encounter when studying the Dirac--Klein-Gordon system \eqref{equ:DKG}, using Klainerman's vector field method, is that we cannot commute the system with the scaling vector field $S = t\partial_t + x^a \partial_a$, due to the presence of a massive Klein-Gordon field. This obstacle makes it already non-trivial to study the global existence and the global behavior of the solutions to this system in the 3D case. Moreover, in the 2D case, the following two extra difficulties arise.

\smallskip
First, the slow decay nature of both the 2D Dirac and 2D Klein-Gordon fields causes serious problems. Recall that in the 2D case, the free Dirac field's decay rate is $t^{-1/2}$ (there also exists some $\langle t-r\rangle$ decay rate which however is gone near $r=t$), and the free Klein-Gordon field's decay rate is $t^{-1}$, both of which are non-integrable quantities. Thus, this fact makes it highly non-trivial to show the boundedness of the $L^1_tL^2_x$ norm of the nonlinearities, which is required for closing the bootstrap, as they are just quadratic in \eqref{equ:DKG}.

\smallskip
Second, one problem of the insufficient decay of the Dirac field $\psi$ occurs when applying the ghost weight method to close the highest-order energy estimates for both fields, which is the most severe obstacle in the present paper. This problem does not exist in the case of compact initial data; see for instance \cite{DongWyatt}. Recall that it seems necessary to have,
\begin{equation}\label{equ:nece-decay}
|\psi(t, x)| \lesssim \langle t\rangle^{-1/2} \langle t-r\rangle^{-1/2 -\delta},\quad \mbox{for some}\ 0<\delta\ll 1,
\end{equation}
when applying the ghost weight method to close the highest-order energy estimates (according to the spacetime integral parts in \eqref{equ:Denergy} and \eqref{def:KGenergy}). However, even in \cite{DongWyatt} where the initial data are compactly supported, the authors only can show
\begin{equation*}
|\psi(t, x)| \lesssim \langle t\rangle^{-1/2} \langle t-r\rangle^{-1/2}.
\end{equation*}
which suffices to close the highest-order energy estimates by using the hyperboloidal foliation method.
Thus, intuitively one does not expect \eqref{equ:nece-decay}, and needs to find another way to go through the highest-order energy estimates.

\subsection{Key points of the proof}\label{subsec:key-idea}
We now explain how to solve the problems mentioned above. We apply the strategies used in \cite{Dong2005,DongMa,DongMaYuan} to circumvent problems caused by the lack of the scaling vector field. As for the issue of the slow time decay rates for 2D Dirac and Klein-Gordon fields, we rely mainly on two ingredients to conquer it: 1) we rely on various non-trivial nonlinear transformations to transform the original nonlinear terms to faster decaying terms (either null nonlinearities or cubic ones), which are essentially in the spirit of Shatah's normal form \cite{Shatah} (we also refer to \cite{DongWyatt, Tsutsumi}); 2) we rely on an observation that the Klein-Gordon component $v$ is able to transform the $\langle t-r\rangle$ decay of the Dirac component $\psi$ to the favorable $\langle t+r\rangle$ decay, and this is a vital ingredient used in \cite{DongMa} in the context of 2D Klein-Gordon--Zakharov system. We refer to \cite[Introduction]{DongMa} for a more detailed illustration. 

\smallskip
Next, we explain in detail how we resolve the highest-order energy estimates.
Heuristically, we consider the first order energy and chose $\psi\partial_{\alpha}v$ as the representative nonlinear term. Note that, the term $\psi\partial_{\alpha}v$ appears in the energy estimate of $\partial_{\alpha}\psi$, due to the Leibniz rule and $\left[-i\gamma^{\mu}\partial_{\mu},\partial_{\alpha}\right]=0$,
\begin{equation*}
-i\gamma^\mu \partial_\mu \partial_\alpha \psi 
=\partial_\alpha(v \,\psi)
=\psi\partial_{\alpha}v+v\partial_{\alpha}\psi.
\end{equation*}
We first assume 
\begin{equation}\label{equ:achievable}
|\psi(t, x)| \lesssim \langle t+r\rangle^{-1/2+\delta} \langle t-r\rangle^{-1},
\end{equation}
which is achievable (to be explained below). 
There is no special structure if we directly estimate the nonlinear term in \eqref{est:Energyphi1} of Lemma \ref{le:EneryDirac},
\begin{equation*}
\begin{aligned}
\int_0^t \| \psi\partial_{\alpha}v \| \d s
&\lesssim
\bigg( \int_0^t \left\| \frac{\partial_{\alpha}v}{\langle r-s\rangle^{3/5}} \right\|_{L_{x}^2}^2\d s \bigg)^{1/2} \left( \int_0^t \left\|\langle r-s\rangle^{3/5}\psi \right\|_{L_{x}^\infty}^2 \, \d s \right)^{1/2}
\\
&\lesssim
\langle t\rangle^\delta \bigg( \int_0^t \left\| \frac{\partial_\alpha v}{\langle r-s\rangle^{3/5}} \right\|_{L_{x}^2}^2\d s \bigg)^{1/2},
\end{aligned}
\end{equation*}
in which we used \eqref{equ:achievable} in the last step, and this does not allow one to close the bootstrap due to the growth $\langle t\rangle^\delta$. However, a special structure can be discovered, based on a result in Lemma \ref{lem:hidden} or \cite[Lemma 3.2]{DongWyatt}. More precisely, if we apply \eqref{est:Energyphi2} in Lemma \ref{le:EneryDirac}, we need to treat
\begin{equation*}
(\partial_\alpha v) \psi^* \gamma^0 \partial_\alpha\psi,
\end{equation*}
instead of the original $\psi\partial_\alpha v$. Thus, according to Lemma \ref{lem:hidden}, we need to bound
\begin{equation*}
\int_0^t \int_{\mathbb{R}^2} \left|(\partial_\alpha v) \psi^* \gamma^0 \partial_\alpha \psi \right| \d x \d s
\lesssim
\int_0^t \int_{\mathbb{R}^2} \left| \partial_\alpha v \right| \left|\psi \right| \left| [\partial_\alpha \psi]_- \right|\d x \d s
+ \mbox{similar terms},
\end{equation*}
with $[\cdot]_-$ defined in \eqref{def:phi-}. Using \eqref{equ:achievable} in the above, we have 
\begin{equation*}
\begin{aligned}
&\int_0^t \int_{\mathbb{R}^2} \left| \partial_\alpha v \right| \left|\psi \right| \left| [\partial_\alpha \psi]_- \right|\d x \d s
\\
&\lesssim
\int_0^t \left\| \frac{\partial_\alpha v}{\langle s-r\rangle^{3/5}} \right\|_{L_{x}^2}  \left\| \frac{[\partial_\alpha \psi]_-}{\langle s-r\rangle^{3/5}} \right\|_{L_{x}^2}\d s
\\
&\lesssim
\bigg(\int_0^t \left\| \frac{\partial_\alpha v}{\langle s-r\rangle^{3/5}} \right\|^2_{L_{x}^2}\d s\bigg)^{1/2}
\bigg(\int_0^t \left\| \frac{[\partial_\alpha \psi]_-}{\langle s-r\rangle^{3/5}} \right\|^2_{L_{x}^2} \, \d s \bigg)^{1/2},
\end{aligned}
\end{equation*}
in which we used the fact $|\langle s-r\rangle^{6/5} \psi| \lesssim 1$. This is not yet sufficient for closing the bootstrap, but the advantage is that there is no extra time growth in the right hand side. Some other technical tricks would help to close the bootstrap, but we do not exhaust those details here.
 
\smallskip
We end this part by briefly showing how to get \eqref{equ:achievable}. We introduce an auxiliary variable $\Psi$, which is the solution to some wave equation, and which is connected with $\psi$ via the relation $|\psi| \lesssim |\partial \Psi|$. This idea originally came from \cite[Lemma 3]{Boura00} when treating 1D Dirac--Klein-Gordon equations. For the variable $\Psi$, we show that 
\begin{equation}
 |L_{1} \Psi|+ |L_{2} \Psi|+ |S \Psi| \lesssim \langle t+r\rangle^{-1/2+\delta},
\end{equation}
and then we are led to \eqref{equ:achievable} by
$$
 |\psi| \lesssim |\partial \Psi|
 \lesssim \langle t-r\rangle^{-1} \left(  |L_1 \Psi|+|L_{2}\Psi| + |S \Psi| \right)
 \lesssim  \langle t+r\rangle^{-1/2+\delta} \langle t-r\rangle^{-1},
$$
in which $0<\delta \ll 1$, and $L_1$, $L_{2}$ and $S$ are standard vector fields.

\subsection{Outline of the paper}
The paper is organized as follows. First, in section~\ref{Se:Pre}, we state notation and all the technical tools involved in an energy approach to the small data solution problem of the system~\eqref{equ:DKG}: the estimates on null form and vector fields, global Sobolev inequality, pointwise and energy estimates of the Dirac spinor and wave-type field, and the hidden structures within the system~\eqref{equ:DKG}. Second, in subsection~\ref{Se:Boot}, we introduce the bootstrap setting of this paper. Then, in subsection~\ref{Se:Pointwise}-\ref{Se:Nonlin}, we derive the pointwise estimates for solution and the space-time norm estimates for nonlinear terms under the bootstrap assumption. Last, we finish the proof of Theorem~\ref{thm:main} in subsection~\ref{Se:End}-\ref{Se:Endthm}.

\section{Preliminaries}\label{Se:Pre}

\subsection{Notation and conventions}
Our problem is in (1+2) dimensional spacetime $\R^{1+2}$. We denote a spacetime point in $\R^{1+2}$ by 
$(t,x)=(x_{0},x_{1},x_{2})$, and its spatial radial by $r=\sqrt{x_{1}^{2}+x_{2}^{2}}$. Set $\omega_{a}=\frac{x_{a}}{r}$ for $a=1,2$ and $x=(x_{1},x_{2})\in \R^{2}$. Spacetime indices are represented by Greek letters $\left\{\alpha,\beta,\dots\right\}$ and spatial indices are denoted by Roman indices $\left\{a,b,\dots\right\}$. We use Japanese bracket to denote $\langle \rho\rangle=(1+\rho^{2})^{\frac{1}{2}}$ for $\rho\in \R$.
We write $A \lesssim B$ to indicate $A\leq C_0 B$ with $C_0$ a universal constant. Unless otherwise specified we will always adopt the Einstein summation convention for repeated lower and upper indices.

Following Klainerman~\cite{Klainerman86}, we introduce the following vector fields
\begin{enumerate}
	\item [(1)] Translations: $\partial_{\alpha}:=\partial_{x_{\alpha}}$, for $\alpha=0,1,2$.
	\item [(2)] Rotation: $\Omega:=x_{1}\partial_{2}-x_{2}\partial_{1}$.
	\item [(3)] Scaling vector field: $S:=t\pt +x^{a}\partial_{a}$.
	\item [(4)] Lorentz boosts: $L_{a}:=t\partial_{a}+x_{a}\partial_{t}$, for $a=1,2$.
\end{enumerate}
Following Bachelot~\cite{Bache}, we also introduce the modified rotation and Lorentz boots,
\begin{equation*}
\widehat{\Omega}=\Omega-\frac{1}{2}\gamma^{1}\gamma^{2}\quad \mbox{and}\quad 
\widehat{L}_{a}=L_{a}-\frac{1}{2}\gamma^{0}\gamma^{a},\ \mbox{for}\ a=1,2.
\end{equation*}
We define the two ordered sets of vector fields,
\begin{equation*}
\begin{aligned}
\left(\Gamma_{1},\Gamma_{2},\Gamma_{3},\Gamma_{4},\Gamma_{5},\Gamma_{6}\right)&=\left(\partial_{0},\partial_{1},\partial_{2},\Omega,L_{1},L_{2}\right),\\
\left(\widehat{\Gamma}_{1},\widehat{\Gamma}_{2},\widehat{\Gamma}_{3},\widehat{\Gamma}_{4},\widehat{\Gamma}_{5},\widehat{\Gamma}_{6}\right)&=\left(\partial_{0},\partial_{1},\partial_{2},\widehat{\Omega},\widehat{L}_{1},\widehat{L}_{2}\right).
\end{aligned}
\end{equation*}
Moreover, for all multi-index $I=(i_{1},i_{2},i_{3},i_{4},i_{5},i_{6})\in \mathbb{N}^{6}$, we denote
\begin{equation*}
\Gamma^{I}=\prod_{k=1}^{6}\Gamma_{k}^{i_{k}}\quad \mbox{and}\quad 
\widehat{\Gamma}^{I}=\prod_{k=1}^{6}\widehat{\Gamma}_{k}^{i_{k}}.
\end{equation*}
Additionally, we introduce the good derivatives $G_{a}=\partial_{a}+\omega_{a}\partial_{t}$ for $a=1,2$.

To shorten notation, we denote 
\begin{equation*}
\begin{aligned}
\Gamma=\left(\Gamma_{1},\Gamma_{2},\Gamma_{3},\Gamma_{4},\Gamma_{5},\Gamma_{6}\right)\quad \mbox{and}\quad \widehat{\Gamma}=\left(\widehat{\Gamma}_{1},\widehat{\Gamma}_{2},\widehat{\Gamma}_{3},\widehat{\Gamma}_{4},\widehat{\Gamma}_{5},\widehat{\Gamma}_{6}\right).
\end{aligned}
\end{equation*}
More specifically, for any $\C$-valued or $\C^{2}$-valued function $f$, we have 
\begin{equation*}
\begin{aligned}
\left|\Gamma f\right|=\left(\sum_{k=1}^{6}\left|\Gamma_{k}f\right|^{2}\right)^{\frac{1}{2}}\quad \mbox{and}\quad \big|\widehat{{\Gamma}}f\big|=\left(\sum_{k=1}^{6}\big|\widehat{\Gamma}_{k}f\big|^{2}\right)^{\frac{1}{2}}.
\end{aligned}
\end{equation*}

To state the global Sobolev inequality, we also define 
\begin{equation*}
\left(\Lambda_{1},\Lambda_{2}\right)=\left(\partial_{r},\Omega\right)\quad \mbox{and}\quad
\Lambda^{I}=\Lambda_{1}^{i_{1}}\Lambda_{2}^{i_{2}},\ \  \mbox{for}\ I=(i_{1},i_{2})\in \mathbb{N}^{2}.
\end{equation*}

Following Dong-Wyatt~\cite[Section 3]{DongWyatt}, we introduce the following notation in order to explain the hidden structure of the nonlinear term of~\eqref{equ:DKG}: for all $\phi=\phi(t,x):\R^{1+2}\to \C^{2}$, we set 
\begin{equation}\label{def:phi-}
\begin{aligned}
\left[\phi\right]_{+}(t,x)&:=\phi(t,x)+\omega_{a}\gamma^{0}\gamma^{a}\phi(t,x),\\
\left[\phi\right]_{-}(t,x)&:=\phi(t,x)-\omega_{a}\gamma^{0}\gamma^{a}\phi(t,x).
\end{aligned}
\end{equation}

We fix a suitable cut-off $C^{1}$ function $\chi:\R\to \R$ such that 
\begin{equation}\label{def:chi}
\chi'\ge 0\quad \mbox{and}\quad  \chi=\left\{\begin{aligned}
&0,\quad \mbox{for}\ x\le 1,\\
&1,\quad \mbox{for}\ x\ge 2.
\end{aligned}\right.
\end{equation}

Let $\left\{\chi_{j}\right\}_{j=0}^{\infty}$ be a Littlewood-Paley partition of unity, i.e.
\begin{equation*}
1=\sum_{j=0}^{\infty}\chi_{j}(\rho),\ \rho\ge 0,\ \chi_{j}\in C_{0}^{\infty}\left(\R^{2}\right),\quad \chi_{j}\ge 0\quad \mbox{for all}\ j\ge 0,
\end{equation*}
and
\begin{equation*}
\mbox{supp}\chi_{0}\cap [0,\infty)=[0,2],\quad 
\mbox{supp}\chi_{j}\subset \left[2^{j-1},2^{j+1}\right]\quad \mbox{for all}\ j\ge 1.
\end{equation*}

For simplicity of notation, we denote the initial data $(\psi_{0},v_{0},v_{1})$ by $\left({\psi}_{0},\vec{v}_{0}\right)$.

	Last, for further reference, we recall the following standard Gronwall's inequality. Let $T>0$ and $f$, $g$, $h\in C\left([0,T]:\R\right)$ be nonnegative functions with the property that
\begin{equation*}
f(t)\le g(t)+\int_{0}^{t}h(s)f(s)\d s,\quad \mbox{for all}\ t\in [0,T].
\end{equation*}
Then, we have 
\begin{equation}\label{est:Gron}
f(t)\le g(t)+\int_{0}^{t}g(s)h(s)\exp\left(\int_{s}^{t}h(\tau)\d \tau\right)\d s,\quad \mbox{for all}\  t\in [0,T].
\end{equation}

\subsection{Preliminary estimates}
In this subsection, we recall several preliminary estimates that are related to the vector fields and the null form. First, from the fact that the differences between (unmodified) vector fields $\Gamma$ and modified vector fields $\widehat{{\Gamma}}$ are constant matrices, we have 
\begin{equation}\label{est:hatGa}
\sum_{|I|\le M}\big|\widehat{{\Gamma}}^{I}f\big|\lesssim \sum_{|I|\le M}\big|{\Gamma}^{I}f\big|\lesssim \sum_{|I|\le M}\big|\widehat{{\Gamma}}^{I}f\big|,
\end{equation}
\begin{equation}\label{est:hatparGa}
\sum_{|I|\le M}\big|\partial\widehat{{\Gamma}}^{I}f\big|\lesssim \sum_{|I|\le M}\big|\partial{\Gamma}^{I}f\big|\lesssim \sum_{|I|\le M}\big|\partial\widehat{{\Gamma}}^{I}f\big|,
\end{equation}
for any smooth $\mathbb{R}$-valued or $\mathbb{C}^{2}$-valued function $f$ and $M\in \mathbb{N}^{+}$.

Second, we recall the following estimates related to the vector fields $\Gamma$.

\begin{lemma}[\cite{Sogge}]
	For any smooth function $f=f(t,x)$, we have 
	
	\begin{equation}\label{est:comm}
	\left|\left[\partial,\Gamma^{I}\right]f\right|+\left|\left[S,\Gamma^{I}\right]f\right|\lesssim \sum_{|J|< |I|}\left|\partial \Gamma^{J}f\right|,
	\end{equation}
	\begin{equation}\label{est:Gaparf}
	\langle t+r\rangle \sum_{a=1,2}|G_{a}f|+\langle t-r\rangle |\partial f|\lesssim \left(|Sf|+\left|\Gamma f\right|\right).
	\end{equation}
\end{lemma}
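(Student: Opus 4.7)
The plan is to treat the two inequalities separately, since they have rather different character: the commutator bound \eqref{est:comm} is a purely algebraic identity proved by induction, whereas the weighted gradient estimate \eqref{est:Gaparf} follows from explicit algebraic relations among $S$, $L_a$ and the usual derivatives, plus a case analysis in the regimes where the weights $\langle t\pm r\rangle$ degenerate.

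For \eqref{est:comm} I would argue by induction on $|I|$. In the base case, for each basic vector field $\Gamma_k\in\{\partial_\beta,\Omega,L_b\}$, the commutator $[\partial_\alpha,\Gamma_k]$ is either zero or a single translation $\pm\partial_\mu$, and $[S,\Gamma_k]$ is either zero or again $\pm\partial_\mu$ (explicitly $[S,\partial_\alpha]=-\partial_\alpha$, $[S,\Omega]=0$, $[S,L_a]=0$); these all follow from elementary computations using $S=t\partial_t+x^a\partial_a$ and the definitions of $\Omega$ and $L_b$. The inductive step uses the Leibniz-type identity $[X,\Gamma_{k_1}\Gamma^J]=[X,\Gamma_{k_1}]\Gamma^J+\Gamma_{k_1}[X,\Gamma^J]$ for $X\in\{\partial_\alpha,S\}$, after which each resulting term is absorbed into a sum of the form $\sum_{|J'|<|I|}|\partial\Gamma^{J'}f|$ via the base case and the inductive hypothesis.

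For \eqref{est:Gaparf} the key algebraic identities are
\begin{equation*}
\omega^a L_a + S = (t+r)(\partial_t+\partial_r), \qquad \omega^a L_a - S = (t-r)(\partial_r - \partial_t),
\end{equation*}
obtained by expanding $L_a$ and $S$ and using $x_a=r\omega_a$. The first identity, together with $\omega^a G_a=\partial_t+\partial_r$, immediately yields $\langle t+r\rangle|\omega^a G_a f|\lesssim |Sf|+|\Gamma f|$, which controls the radial part of $G$. The tangential part $G_a-\omega_a\omega^b G_b=\partial_a-\omega_a\partial_r$ is a bounded coefficient times $\Omega/r$ in two dimensions, and it is recovered through the auxiliary identities $tG_a=L_a+(t-r)\omega_a\partial_t$ and $rG_a=L_a+(r-t)\partial_a$, which together give $(t+r)|G_a f|\lesssim |Lf|+\langle t-r\rangle|\partial f|$. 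The bound on $\langle t-r\rangle|\partial f|$ is then obtained by inverting the $2\times 2$ linear system above to express $\partial_t$ and $\partial_r$ in terms of $Sf$, $\omega^a L_a f$, and the weights $t\pm r$; the remaining angular derivative $\partial_a-\omega_a\partial_r$ is again handled via $\Omega$.

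The only mild subtlety is the behaviour where the weights degenerate: when $\langle t+r\rangle\lesssim 1$ or $\langle t-r\rangle\lesssim 1$ one cannot invert the algebraic identities, but the inequality becomes trivial because the weight is $O(1)$ and one may use $|\partial f|\lesssim|\Gamma f|$ directly, since $\partial\subset\Gamma$. Beyond these routine case distinctions I expect no real obstacle; the statement is a standard Klainerman-style identity attributed to \cite{Sogge}, so in practice I would lay out the identities above and refer the reader there for the complete verification.
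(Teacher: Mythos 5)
Your approach is the standard one from Klainerman's and Sogge's work, and indeed the paper itself gives no proof, simply citing Sogge (page 38 and Proposition~1.1) and noting that the inequalities follow from elementary computations. Your sketch correctly identifies the key algebraic ingredients: for \eqref{est:comm}, the commutators $[\partial_\alpha,\Gamma_k]$ and $[S,\Gamma_k]$ are each either zero or a fixed translation $\pm\partial_\mu$, and the general case follows by Leibniz and induction on $|I|$; for \eqref{est:Gaparf}, the identities $\omega^a L_a\pm S=(t\pm r)(\partial_r\pm\partial_t)$, $\omega^a G_a=\partial_t+\partial_r$, $tG_a=L_a+(t-r)\omega_a\partial_t$, and $rG_a=L_a+(r-t)\partial_a$ are all correct (I checked each of them), and adding the last two yields $(t+r)G_a=2L_a+(t-r)(\omega_a\partial_t-\partial_a)$, which conveniently controls $\langle t+r\rangle|G_a f|$ without any $1/r$ factor once $\langle t-r\rangle|\partial f|$ is under control.

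The one step that is incomplete as written is the bound $\langle t-r\rangle|\partial_a f|\lesssim|Sf|+|\Gamma f|$. You control the radial part $\omega_a\partial_r$ by inverting the $2\times2$ system, and then say the tangential part $\partial_a-\omega_a\partial_r$ is ``handled via $\Omega$.'' That works only when $r\gtrsim t+r$, because $\partial_a-\omega_a\partial_r=\pm\omega_b\,\Omega/r$ has a $1/r$ factor, and in the regime $r$ small with $t$ large both weights $\langle t+r\rangle$ and $\langle t-r\rangle$ are large while $1/r$ is unbounded, so this is not one of the ``trivial'' degenerate cases you list. The fix is a further case split: for $r\le t/2$, use $\partial_a=t^{-1}(L_a-x_a\partial_t)$ directly, so that
\begin{equation*}
\langle t-r\rangle|\partial_a f|\le\frac{\langle t-r\rangle}{t}\bigl(|L_a f|+r|\partial_t f|\bigr)\lesssim|L_a f|+\langle t-r\rangle|\partial_t f|,
\end{equation*}
since $\langle t-r\rangle\lesssim t$ and $r\lesssim|t-r|$ there; for $r\ge t/2$ (and $r\gtrsim 1$), $\langle t-r\rangle/r\lesssim 1$ and the $\Omega/r$ decomposition suffices. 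With this split the argument is complete, and it is the same route Sogge takes. Everything else in your sketch, including the degenerate-weight discussion and the induction for \eqref{est:comm}, is correct.
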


\begin{proof}
	The proof of these inequalities directly follows from some elementary
	computations (see for instance~\cite[Page 38 and Proposition 1.1]{Sogge}), and we omit it.
	\end{proof}

Next, we recall the following estimates related to the null form $Q_{0}$.
\begin{lemma}
	For any smooth $\mathbb{C}$-valued or $\mathbb{C}^{2}$-valued functions $f$ and $g$, the following estimates hold.
	\begin{enumerate}
		\item {\rm {Estimate of $Q_{0}(f,g)$}}. We have 
	\begin{equation}\label{est:Q0fg}
	\left|Q_{0}(f,g)\right|\lesssim \langle t+r\rangle^{-1}\left(\left|Sf\right|+\left|\Gamma f\right|\right)\left|\Gamma g\right|,
	\end{equation}

\item {\rm{Estimate of $Q_{0}(f,g)$ in the interior region.}} We have 
	\begin{equation}\label{est:Q0inside}
	\begin{aligned}
	\left|Q_{0}(f,g)\right|
	&\lesssim \frac{\langle t-r\rangle}{\langle t+r\rangle}|\partial f||\partial g|+\langle t\rangle^{-1}|\Gamma f||\Gamma g|,\quad \mbox{for}\ r\le 3t+3.
	\end{aligned}
	\end{equation}
	
\end{enumerate}
\end{lemma}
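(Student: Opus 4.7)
The plan is to prove both estimates via the standard null-frame decomposition of $Q_0$, combined with algebraic identities that let us trade ordinary derivatives for the Klainerman vector fields $S$, $L_a$, and for good derivatives $G_a$. The starting point for both parts is the identity
\begin{equation*}
Q_0(f,g) = -\tfrac{1}{2}\bigl[L_+ f \cdot L_- g + L_- f \cdot L_+ g\bigr] + r^{-2}\,\Omega f \cdot \Omega g,
\end{equation*}
where $L_\pm = \partial_t \pm \partial_r$. The null structure is visible in the absence of any $L_- f \cdot L_- g$ term: every summand contains a \emph{good} factor ($L_+$ or $r^{-1}\Omega$), whose decay in $\langle t+r\rangle^{-1}$ must be extracted by an appropriate identity.

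For (i), the heart of the argument is the algebraic identity $(t+r)L_+ = S + \omega_a L_a$ (with $\omega_a = x_a/r$), immediately verified from $\omega_a L_a = t\partial_r + r\partial_t$ and $S = t\partial_t + r\partial_r$. This produces $|L_+ f| \lesssim \langle t+r\rangle^{-1}(|Sf| + |\Gamma f|)$, which is exactly the shape required on the right-hand side of (i). For the angular factor, the identity $t\Omega = x_1 L_2 - x_2 L_1$ yields $|r^{-1}\Omega f| \lesssim t^{-1}|\Gamma f|$ in the region $r \le t$, while for $r \ge t$ the trivial bound $r^{-1}|\Omega f| \le |\partial f| \lesssim |\Gamma f|$ and $r \sim \langle t+r\rangle$ give the same conclusion. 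Pairing with the trivial $|L_- g| \le |\partial g| \le |\Gamma g|$ controls the $L_+ f \cdot L_- g$ piece and the angular piece directly; the remaining $L_- f \cdot L_+ g$ piece is handled by invoking $Q_0(f,g) = Q_0(g,f)$ and re-running the argument after swapping the roles of $f$ and $g$.

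For (ii), to extract the sharper weight $\langle t-r\rangle/\langle t+r\rangle$ on the $|\partial f||\partial g|$ contribution, I would refine the bound on $L_+$. Combining $Sf = (t-r)\partial_t f + r L_+ f$ (easy to check) with $(t+r)L_+ = S + \omega_a L_a$ produces the refined identity
\begin{equation*}
tL_+ f = (t-r)\partial_t f + \omega_a L_a f,
\end{equation*}
which in the interior $r \le 3t+3$ (where $\langle t+r\rangle \sim \langle t\rangle$) gives
\begin{equation*}
|L_+ f| \lesssim \tfrac{\langle t-r\rangle}{\langle t+r\rangle}|\partial f| + \langle t\rangle^{-1}|\Gamma f|.
\end{equation*}
Substituting this (and its analogue for $L_+ g$) into the null-frame decomposition, using $|L_-| \le |\partial|$ on the paired factor, and $r^{-1}|\Omega f| \lesssim \langle t\rangle^{-1}|\Gamma f|$ for the angular contribution, yields the stated bound. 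The small-time regime $t \lesssim 1$ is covered by the trivial pointwise estimate $|Q_0| \lesssim |\partial f||\partial g|$.

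The main obstacle will be the treatment of the mixed term $L_- f \cdot L_+ g$, since $L_-$ carries no useful decay of its own: for (i) this forces an appeal to the symmetry of $Q_0$ in order to reassign the $S$-factor to the other variable, while for (ii) the refined identity $tL_+ f = (t-r)\partial_t f + \omega_a L_a f$ is essential --- it trades the $|Sf|$-contribution one would obtain from the naive bound for the sharp $\langle t-r\rangle$ weight, and is really the only non-routine computation in the proof.
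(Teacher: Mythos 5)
Your null-frame decomposition and the two identities $(t+r)L_+ = S + \omega_a L_a$ and $tL_+ = (t-r)\partial_t + \omega_a L_a$ are all correct, and the argument for part (ii) is sound --- it is a genuinely different route from the paper's, which substitutes $\partial_a = t^{-1}(L_a - x_a\partial_t)$ directly into $Q_0$, but both routes work (and yours reuses the null-frame decomposition already set up for (i)). However, there is a real gap in your treatment of the cross term $L_-f\cdot L_+g$ in part (i). The estimate \eqref{est:Q0fg} is genuinely asymmetric: $S$ is applied only to $f$. Bounding $|L_+ g|\lesssim \langle t+r\rangle^{-1}(|Sg|+|\Gamma g|)$ and $|L_-f|\lesssim|\Gamma f|$ puts $S$ on $g$, which is not the stated inequality and cannot be converted to it; ``re-running the argument with $f$ and $g$ swapped'' via $Q_0(f,g)=Q_0(g,f)$ proves the \emph{swapped} inequality, not the original one. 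The two are inequivalent: if $\partial_t g\equiv 1$ and $\nabla_x g\equiv 0$, then $|L_+g|=1$, $|\Gamma g|\sim\langle r\rangle$, $|Sg|=t$, so for $r$ small and $t$ large the bound $\langle t+r\rangle^{-1}|\Gamma f|(|Sg|+|\Gamma g|)\sim |\Gamma f|$ has no time decay, whereas the claimed right side $\langle t+r\rangle^{-1}(|Sf|+|\Gamma f|)|\Gamma g|\sim\langle t\rangle^{-1}(|Sf|+|\Gamma f|)$ does.

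The fix stays entirely within your framework and uses the companion identity $(t-r)L_-=S-\omega_a L_a$, verified exactly like the one you already checked. Combining it with $tL_+g=(t-r)\partial_t g+\omega_a L_a g$ shifts the $(t-r)$ factor from the $g$-side to the $f$-side:
\begin{equation*}
L_-f\cdot L_+g=\frac{1}{t}\Bigl[(Sf-\omega_a L_a f)\,\partial_t g+(L_-f)\,\omega_a L_a g\Bigr],
\end{equation*}
an identity valid for $t>0$ and nonsingular across $t=r$, which gives $|L_-f\cdot L_+g|\lesssim t^{-1}(|Sf|+|\Gamma f|)|\Gamma g|$. In the complementary regime $r\gtrsim t$, use $rL_+g=(r-t)\partial_r g+\omega_a L_a g$ together with $(r-t)L_-f=\omega_a L_a f-Sf$ to get the same with $r^{-1}$ in place of $t^{-1}$; these, plus the trivial case $t+r\le 1$, yield \eqref{est:Q0fg}. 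With that correction your argument becomes a self-contained alternative to the paper's, which for (i) only cites \cite[Lemma 3.3]{Sogge}.
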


\begin{proof}
	Proof of (i). See~\cite[Lemma 3.3]{Sogge} for the complete proof.
	
	Proof of (ii). The proof is inspired by the previous works~\cite[Section 2]{Geor} and
	\cite[Section 4]{LeFlochMa} on estimating the null form.~Note that, the null form can be rewritten as 
	\begin{equation*}
	\begin{aligned}
	Q_{0}(f,g)&=\left(1-\frac{r^{2}}{t^{2}}\right)(\pt f)(\pt g)-\frac{1}{t^{2}}\sum_{a=1,2}\left(L_{a}f\right)\left(L_{a}g\right)\\
	&+\frac{x^{a}}{t^{2}}\left[(\pt f)(L_{a}g)+(L_{a}f)(\pt g)\right],
	\end{aligned}
	\end{equation*} 
	which implies~\eqref{est:Q0inside}.
	\end{proof}
Recall that, from~\cite[Page 58]{Sogge}, for any multi-index ${I}\in \mathbb{N}^{+}$, we also have 
\begin{equation}\label{est:GamQ0}
\Gamma^{I}Q_{0}(f,g)=\sum_{I_{1}+I_{2}=I}Q_{0}\left(\Gamma^{I_{1}}f,\Gamma^{I_{2}}g\right).
\end{equation}
Combining~\eqref{est:Q0fg} and~\eqref{est:GamQ0}, for any $I\in \mathbb{N}^{6}$, we have 
\begin{equation}\label{est:GaQ0fg}
\left|\langle t+r\rangle \Gamma^{I}Q_{0}(f,g)\right|\lesssim \sum_{|I_{1}|+|I_{2}|\le |I|}\left(\left|S\Gamma^{I_{1}}f\right|+\left|\Gamma \Gamma^{I_{1}}f|\right|\right)\left|\Gamma \Gamma^{I_{2}}g\right|.
\end{equation}
In contrast to the case of the Klein-Gordon operator, the Dirac operator commutes with the modified vector field $\widehat{{\Gamma}}$ instead of the unmodified one $\Gamma$. Hence, the following two Lemmas that are related to the unmodified and modified vector fields $(\Gamma,\widehat{{\Gamma}})$ are required, in order to estimate the Dirac-Dirac interaction term $\psi^{*}\gamma^{0}\psi$ appearing as sourcing for the Klein-Gordon equation.
\begin{lemma}
	For any multi-index $I\in \mathbb{N}^{6}$, smooth $\mathbb{R}$-valued function $f_{1}$ and $\mathbb{C}^{2}$-valued function $f_{2}$, we have 
		\begin{equation}\label{est:hatGfPhi}
	\big|\big[\widehat{{\Gamma}}^{I}(f_{1}f_{2})\big]_{-}\big|\lesssim \sum_{|I_{1}|+|I_{2}|\le |I|}\left|\Gamma^{I_{1}}f_{1}\right|\big|\big[\widehat{{\Gamma}}^{I_{2}}f_{2}\big]_{-}\big|.
	\end{equation}
\end{lemma}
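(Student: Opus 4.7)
The plan is to reduce the estimate to a Leibniz-type formula for $\widehat{\Gamma}^{I}$ acting on the product of a scalar and a spinor, and then exploit the fact that the projection $[\,\cdot\,]_-$ commutes with multiplication by scalar-valued functions.

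First, I would record the elementary observation that for each $k$ the difference $\widehat{\Gamma}_{k}-\Gamma_{k}$ is a constant $2\times 2$ matrix (zero for $k=1,2,3$, and $-\tfrac12\gamma^{1}\gamma^{2}$ or $-\tfrac12\gamma^{0}\gamma^{a}$ for the modified rotation/boosts). Because $f_{1}$ is $\mathbb{R}$-valued (hence scalar), this constant matrix passes through $f_{1}$, so the Leibniz rule at order one takes the very clean form
\begin{equation*}
\widehat{\Gamma}_{k}(f_{1}f_{2})=(\Gamma_{k}f_{1})\,f_{2}+f_{1}\,\widehat{\Gamma}_{k}f_{2},
\end{equation*}
where on the left-hand side $\Gamma_{k}$ (not $\widehat{\Gamma}_{k}$) hits the scalar factor.

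Second, I would iterate this identity by induction on $|I|$. At each step the only new ingredient is the first-order Leibniz rule above, applied to each factor. Using also the fact that commutators among the $\widehat{\Gamma}_{k}$'s produce terms of the same algebraic type but lower order (the commutation relations of $\Gamma$'s are standard, and the constant-matrix corrections either vanish or commute past scalars), a routine bookkeeping argument yields a pointwise bound of the form
\begin{equation*}
\big|\widehat{\Gamma}^{I}(f_{1}f_{2})\big|
\;\lesssim\;\sum_{|I_{1}|+|I_{2}|\le|I|}\big|\Gamma^{I_{1}}f_{1}\big|\,\big|\widehat{\Gamma}^{I_{2}}f_{2}\big|,
\end{equation*}
and in fact the identity analog, where the right-hand side is an honest (signed) sum of terms $(\Gamma^{I_{1}}f_{1})(\widehat{\Gamma}^{I_{2}}f_{2})$ plus lower-order commutator terms of the same structure.

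Third, I apply the projector $[\,\cdot\,]_{-}=(I_{2}-\omega_{a}\gamma^{0}\gamma^{a})(\,\cdot\,)$ defined in \eqref{def:phi-}. The key point is that $\Gamma^{I_{1}}f_{1}$ is real-scalar-valued (since $f_{1}$ is and the $\Gamma_{k}$ are first-order real differential operators), so it commutes with the matrix $I_{2}-\omega_{a}\gamma^{0}\gamma^{a}$. Therefore
\begin{equation*}
\big[(\Gamma^{I_{1}}f_{1})(\widehat{\Gamma}^{I_{2}}f_{2})\big]_{-}=(\Gamma^{I_{1}}f_{1})\,\big[\widehat{\Gamma}^{I_{2}}f_{2}\big]_{-},
\end{equation*}
and the desired estimate \eqref{est:hatGfPhi} follows from the Leibniz expansion and the triangle inequality.

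The only nontrivial step is the inductive bookkeeping in the second stage—keeping track of the commutators between $\widehat{\Gamma}_{k}$'s so that every term really has the form (scalar derivative of $f_{1}$)$\times$(modified derivative of $f_{2}$). This is routine but needs care; once set up, the collapse of $[\,\cdot\,]_{-}$ through scalar factors is immediate.
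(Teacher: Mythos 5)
Your proof is correct and takes essentially the same approach as the paper's: both rest on the first-order Leibniz identity $\widehat{\Gamma}_k(f_1f_2)=(\Gamma_kf_1)f_2+f_1\widehat{\Gamma}_kf_2$ (valid because the constant-matrix correction $\widehat{\Gamma}_k-\Gamma_k$ passes through the scalar $f_1$), an induction with commutator bookkeeping in the Lie algebra, and the fact that $[\,\cdot\,]_-$ commutes with multiplication by scalars. The only stylistic difference is that the paper writes the first-order identities with $[\,\cdot\,]_-$ already applied and inducts from there, whereas you establish the unprojected Leibniz expansion for $\widehat{\Gamma}^I(f_1f_2)$ first and apply the projection once at the end—these are interchangeable.
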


\begin{proof}
 By an elementary computation, we have 
	\begin{equation*}
	\begin{aligned}
	\big[\widehat{\Omega}(f_{1}f_{2})\big]_{-}&=\left(\Omega f_{1}\right)\left[f_{2}\right]_{-}+f_{1}\big[\widehat{\Omega}f_{2}\big]_{-},\\
	\big[\widehat{L}_{1}(f_{1}f_{2})\big]_{-}&=\left(L_{1} f_{1}\right)\left[f_{2}\right]_{-}+f_{1}\big[\widehat{L}_{1}f_{2}\big]_{-},\\
	\big[\widehat{L}_{2}(f_{1}f_{2})\big]_{-}&=\left(L_{2} f_{1}\right)\left[f_{2}\right]_{-}+f_{1}\big[\widehat{L}_{2}f_{2}\big]_{-}.
	\end{aligned}
	\end{equation*}
	Combining the above identities with the Leibniz rule of $\partial$ and then using an induction argument, we obtain~\eqref{est:hatGfPhi}.
	\end{proof}

\begin{lemma}
	For any multi-index $I\in \mathbb{N}^{6}$, and any smooth $\mathbb{C}^{2}$-valued functions $\Phi_{1}$ and $\Phi_{2}$, we have 
	\begin{equation}\label{est:hatGG}
	\left|\Gamma^{I}\left(\Phi_{1}^{*}\gamma^{0}\Phi_{2}\right)\right|\lesssim \sum_{|I_{1}|+|I_{2}|\le |I|}\big|\big(\widehat{{\Gamma}}^{I_{1}}\Phi_{1}\big)^{*}\gamma^{0}\big(\widehat{{\Gamma}}^{I_{2}}\Phi_{2}\big)\big|.
	\end{equation}
\end{lemma}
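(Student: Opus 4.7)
The plan is to reduce everything to an exact Leibniz identity for the scalar-valued quadratic form $\Phi_1^\ast \gamma^0 \Phi_2$, and then to induct on $|I|$. The key observation is that although $\Gamma_k$ and $\widehat{\Gamma}_k$ differ by a constant matrix $M_k$ (with $M_k = 0$ for $k=1,2,3$, $M_4 = \tfrac12 \gamma^1\gamma^2$, $M_5 = \tfrac12 \gamma^0\gamma^1$, $M_6 = \tfrac12 \gamma^0\gamma^2$), the correction matrices are chosen precisely so that each $\Gamma_k$ acts on $\Phi_1^\ast \gamma^0 \Phi_2$ by the \emph{modified} Leibniz rule
\begin{equation*}
\Gamma_k\bigl(\Phi_1^\ast \gamma^0 \Phi_2\bigr) = \bigl(\widehat{\Gamma}_k \Phi_1\bigr)^\ast \gamma^0 \Phi_2 + \Phi_1^\ast \gamma^0 \bigl(\widehat{\Gamma}_k \Phi_2\bigr).
\end{equation*}
Once this identity is in hand, the lemma follows by a direct induction on $|I|$: apply the above to each vector field peeled off from $\Gamma^I$, and the sum structure in \eqref{est:hatGG} is produced by exactly the same combinatorics as the classical Leibniz rule, with $|I_1| + |I_2| = |I|$.

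First I would verify the modified Leibniz rule. The translations $\partial_\alpha$ act trivially (they equal $\widehat{\partial}_\alpha$ and commute with the constant matrix $\gamma^0$), so the standard Leibniz rule applies. For the remaining fields, since $\Omega, L_1, L_2$ are scalar first-order differential operators, they already satisfy the standard Leibniz rule, and the only thing to check is that the algebraic corrections cancel: the identity above is equivalent to
\begin{equation*}
M_k^\ast \gamma^0 + \gamma^0 M_k = 0, \qquad k=4,5,6.
\end{equation*}
This is a pure Clifford-algebra computation using $(\gamma^\mu)^\ast = -\eta_{\mu\nu}\gamma^\nu$ and the anticommutation relation in \eqref{equ:gamma}. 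For $M_4 = \tfrac12\gamma^1\gamma^2$, one uses $(\gamma^1\gamma^2)^\ast = (\gamma^2)^\ast(\gamma^1)^\ast = \gamma^2\gamma^1 = -\gamma^1\gamma^2$ and $\gamma^0\gamma^1\gamma^2 = \gamma^1\gamma^2\gamma^0$ (two anticommutations), so the two terms cancel. For $M_{4+a} = \tfrac12\gamma^0\gamma^a$, one similarly finds $(\gamma^0\gamma^a)^\ast \gamma^0 = \gamma^0\gamma^a\gamma^0 = -\gamma^a$ and $\gamma^0\gamma^0\gamma^a = \gamma^a$ (using $(\gamma^0)^2 = I$), again cancelling.

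With the modified Leibniz rule established, the induction is routine: assuming the conclusion holds for all multi-indices of length $< |I|$, write $\Gamma^I = \Gamma_k \Gamma^{I'}$ for some $k$ and $|I'| = |I|-1$, apply the inductive hypothesis to $\Gamma^{I'}(\Phi_1^\ast \gamma^0 \Phi_2)$, and then apply the modified Leibniz rule once more to each summand, producing only terms of the form $\bigl(\widehat{\Gamma}^{I_1}\Phi_1\bigr)^\ast \gamma^0 \bigl(\widehat{\Gamma}^{I_2}\Phi_2\bigr)$ with $|I_1|+|I_2| = |I|$. Taking absolute values and using $|a+b|\leq |a|+|b|$ yields \eqref{est:hatGG}.

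The only substantive step is the algebraic cancellation $M_k^\ast \gamma^0 + \gamma^0 M_k = 0$; this is precisely the reason Bachelot's modifications $\widehat{\Omega}$ and $\widehat{L}_a$ are chosen with those particular coefficients, and once it is verified the rest of the argument is a mechanical induction.
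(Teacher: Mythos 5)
Your proof is correct and takes essentially the same approach as the paper: establish the modified Leibniz rule $\Gamma_k(\Phi_1^*\gamma^0\Phi_2) = (\widehat\Gamma_k\Phi_1)^*\gamma^0\Phi_2 + \Phi_1^*\gamma^0(\widehat\Gamma_k\Phi_2)$ for $k=4,5,6$ and then induct. Your reformulation of the algebraic cancellation as the condition $M_k^*\gamma^0 + \gamma^0 M_k = 0$ is exactly the content the paper verifies by direct substitution, just packaged more abstractly.
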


\begin{proof}
	By~\eqref{equ:gamma} and an elementary computation, we have 
	\begin{equation*}
	\begin{aligned}
	\Omega\left(\Phi_{1}^{*}\right)=\big(\widehat{\Omega}\Phi_{1}\big)^{*}-\frac{1}{2}\Phi_{1}^{*}\gamma^{1}\gamma^{2}\quad \mbox{and}\quad L_{a}\left(\Phi_{1}^{*}\right)=\big(\widehat{L}_{a}\Phi_{1}\big)^{*}+\frac{1}{2}\Phi_{1}^{*}\gamma^{0}\gamma^{a},
	\end{aligned}
	\end{equation*}
	and so 
	\begin{equation*}
	\begin{aligned}
	\Omega\left(\Phi_{1}^{*}\gamma^{0}\Phi_{2}\right)
	&=\big(\widehat{\Omega}\Phi_{1}\big)^{*}\gamma^{0}\Phi_{2}+\Phi_{1}^{*}\gamma^{0}\big({\widehat{\Omega}}\Phi_{2}\big)-\frac{1}{2}\Phi_{1}^{*}\left(\gamma^{1}\gamma^{2}\gamma^{0}-\gamma^{0}\gamma^{1}\gamma^{2}\right)\Phi_{2}\\
	&=\big(\widehat{\Omega}\Phi_{1}\big)^{*}\gamma^{0}\Phi_{2}+\Phi_{1}^{*}\gamma^{0}\big({\widehat{\Omega}}\Phi_{2}\big),
	\end{aligned}
	\end{equation*}
	\begin{equation*}
	\begin{aligned}
	L_{a}\left(\Phi_{1}^{*}\gamma^{0}\Phi_{2}\right)
	&=\big(\widehat{L}_{a}\Phi_{1}\big)^{*}\gamma^{0}\Phi_{2}+\Phi_{1}^{*}\gamma^{0}\big({\widehat{L}_{a}}\Phi_{2}\big)+\frac{1}{2}\Phi_{1}^{*}\left(\gamma^{0}\gamma^{a}\gamma^{0}+\gamma^{0}\gamma^{0}\gamma^{a}\right)\Phi_{2}\\
	&=\big(\widehat{L}_{a}\Phi_{1}\big)^{*}\gamma^{0}\Phi_{2}+\Phi_{1}^{*}\gamma^{0}\big({\widehat{L}_{a}}\Phi_{2}\big).
	\end{aligned}
	\end{equation*}
	Combining the above identities with the Leibniz rule of $\partial$ and then using an induction argument, we obtain~\eqref{est:hatGG}.
	\end{proof}

Last, for future reference, we recall the following two global Sobolev inequalities.
\begin{lemma}\label{le:GlobalSobolev}
For all sufficiently regular function $f=f(t,x)$, we have 
\begin{align}
|f(t,x)|&\lesssim \langle t+r\rangle^{-\frac{1}{2}}\sum_{|I|\le 3}\left\|\Gamma^{I}f(t,x)\right\|_{L_{x}^{2}},\label{est:GloSob}\\
|f(t,x)|&\lesssim \langle r\rangle^{-\frac{1}{2}}\sum_{\substack{|I|\le 2\\(i_{1},i_{2})\ne (2,0)}}\|\Lambda^{I}f(t,x)\|_{L_{x}^{2}}\label{est:GloSobr}.
\end{align}
\end{lemma}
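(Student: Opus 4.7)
Estimates (i) and (ii) are of distinct nature: (ii) is a fixed-time weighted Sobolev inequality on $\R^{2}$, while (i) is a Klainerman--Sobolev-type estimate on Minkowski spacetime. My plan is to prove (ii) first via a radial/angular decomposition, and then to deduce (i) by splitting spacetime into an exterior region (where (ii) applies directly) and an interior region (where the Lorentz boosts substitute for the missing scaling vector field).

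\emph{For (ii).} Pass to polar coordinates $(r,\theta)$ by setting $h(r,\theta)=f(t,r\cos\theta,r\sin\theta)$, so that $\Lambda_{1}h=\partial_{r}h$ and $\Lambda_{2}h=\Omega h=\partial_{\theta}h$. On the circle $\{|x|=r_{0}\}$ with $r_{0}\ge 1$, the one-dimensional Sobolev embedding $H^{1}(S^{1})\hookrightarrow L^{\infty}(S^{1})$ yields
\begin{equation*}
\sup_{\theta}|h(r_{0},\theta)|^{2}\lesssim \int_{0}^{2\pi}\bigl(|h|^{2}+|\Omega h|^{2}\bigr)(r_{0},\theta)\,d\theta.
\end{equation*}
Next, using decay of $f$ at infinity (justified by density), the weighted radial identity
\begin{equation*}
r_{0}\int_{0}^{2\pi}|g(r_{0},\theta)|^{2}\,d\theta=-\int_{r_{0}}^{\infty}\int_{0}^{2\pi}\partial_{r}\bigl(r|g|^{2}\bigr)\,d\theta\,dr,
\end{equation*}
together with Cauchy--Schwarz and the polar area element $dx=r\,dr\,d\theta$, bounds the right-hand side by $\|g\|_{L^{2}_{x}}^{2}+\|g\|_{L^{2}_{x}}\|\partial_{r}g\|_{L^{2}_{x}}$. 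Applying this with $g=f$ and then $g=\Omega f$ produces exactly the list $\{f,\partial_{r}f,\Omega f,\partial_{r}\Omega f,\Omega^{2}f\}$ of $\Lambda^{I}f$-norms with $|I|\le 2$ and $(i_{1},i_{2})\neq (2,0)$; crucially $\partial_{r}^{2}f$ never appears. The regime $r_{0}\le 1$, where $\langle r\rangle^{-1/2}\sim 1$, reduces to standard 2D Sobolev on the unit ball in Cartesian coordinates, using $\Delta=\partial_{r}^{2}+r^{-1}\partial_{r}+r^{-2}\Omega^{2}$ and elliptic regularity to avoid $\partial_{r}^{2}$.

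\emph{For (i).} Decompose $\R^{2}$ into the exterior $\{r\ge (1+t)/2\}$ and interior $\{r\le (1+t)/2\}$. In the exterior $\langle t+r\rangle\sim \langle r\rangle$, and since $\partial_{r}=\omega^{a}\partial_{a}$ together with $\Omega\in \Gamma$ imply $|\Lambda^{I}f|\lesssim |\Gamma^{I}f|$ pointwise, (ii) directly yields (i). In the interior $\langle t+r\rangle\sim\langle t\rangle$, and one exploits the Lorentz boosts. For $(t_{0},x_{0})$ in this region, apply the Lorentz transformation of velocity $\beta=x_{0}/t_{0}$ (with $|\beta|\le 1/2$), mapping $(t_{0},x_{0})$ to $(\tau_{0},0)$ with $\tau_{0}=\sqrt{t_{0}^{2}-r_{0}^{2}}\sim t_{0}$. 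In the boosted frame the vector fields $\widetilde{L}_{a}$ reduce to $\tau_{0}\widetilde{\partial}_{a}$ at the new spatial origin, and a spatial rescaling by $\tau_{0}$ combined with standard 2D Sobolev embedding on a unit ball in the rescaled coordinates produces the factor $\tau_{0}^{-1/2}\sim\langle t\rangle^{-1/2}$. The total derivative count is $|I|\le 3$: two for the 2D Sobolev embedding, plus one more absorbed when extracting the radial decay from the rescaling.

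\emph{Main obstacle.} The delicate step is the interior analysis of (i). Since the scaling vector field $S$ is intentionally excluded from $\Gamma$ (the massive Klein--Gordon operator does not commute with $S$), the standard dyadic-rescaling proof of the Klainerman--Sobolev inequality is not directly available; the Lorentz boosts alone must substitute for the rescaling. Tracking how the boost transforms the vector fields and keeping the derivative count at $|I|\le 3$ is the main technicality.
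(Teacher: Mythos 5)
The paper does not prove this lemma: for \eqref{est:GloSob} it cites Georgiev \cite[Lemma 2.4]{Geor} (with further explanation in \cite[Lemma 2.5, Remark 2.6]{DongMaYuan}), and for \eqref{est:GloSobr} it cites Klainerman \cite[Proposition 1]{Klainerman86}. So there is no in-paper proof to compare against; I will assess your argument on its own merits.

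Your proof of \eqref{est:GloSobr} is the standard one and matches the cited source: circle Sobolev plus the radial trace identity $r_0\int_{0}^{2\pi}|g(r_{0},\theta)|^{2}\,\d\theta=-\int_{r_{0}}^{\infty}\int_{0}^{2\pi}\partial_{r}(r|g|^{2})\,\d\theta\,\d r$. This part is fine. (Your treatment of $r_0\le 1$ is hand-waved, but the paper only ever applies \eqref{est:GloSobr} in the exterior region $r\gtrsim 1$, so this is a side issue.)

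For \eqref{est:GloSob} your exterior analysis is correct, but your interior argument via a genuine Lorentz boost has a real gap. A boost is a spacetime diffeomorphism: after boosting so that $(t_{0},x_{0})\mapsto(\tau_{0},0)$, the original slice $\{t=t_{0}\}$ becomes the tilted hyperplane $\{\widetilde{t}+\beta\cdot\widetilde{x}=\tau_{0}\}$ in the new frame, not the constant-$\widetilde{t}$ slice $\{\widetilde{t}=\tau_{0}\}$. The "standard 2D Sobolev embedding on a unit ball in the rescaled coordinates" you invoke lives on a constant-$\widetilde t$ slice, whereas the quantities available to you, $\|\Gamma^{I}f(t_{0},\cdot)\|_{L^{2}_{x}}$, live on the tilted hyperplane. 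Since the lemma is stated for an arbitrary sufficiently regular $f(t,x)$ with no PDE assumed, $L^{2}$ norms on distinct spacelike hypersurfaces are simply unrelated; no energy argument is available to transfer them. Your closing remark that the extra derivative is "absorbed when extracting the radial decay from the rescaling" does not address this mismatch. There is a second, independent problem: even granting the hypersurface issue, the identity $L_{a}=\tau_{0}\widetilde{\partial}_{a}$ holds only at the single point $(\tau_{0},0)$, not on the unit $\widetilde{x}$-ball of radius $\sim\tau_{0}$ needed for Sobolev, so on that ball the rescaled derivatives $\tau_{0}\widetilde{\partial}_{a}$ are \emph{not} controlled by $\{L_{b},\Omega,\partial_{\mu}\}$ with bounded coefficients; a traveling-wave example $f(t,x)=\phi(x-vt)$ illustrates that $t_{0}\partial_{a}f$ at time $t_{0}$ is genuinely of size $t_{0}$ and cannot be cheaply rewritten. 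Georgiev's actual proof is quite different in spirit: it never performs a Lorentz transformation, but combines the radial/angular mechanism of \eqref{est:GloSobr} with the algebraic identity $\omega^{a}L_{a}=t\partial_{r}+r\partial_{t}$ and a dyadic decomposition in $t+r$, so that the $\langle t+r\rangle^{-1/2}$ gain comes from the codimension-one (radial) direction and the boosts enter only through this identity at fixed time, not through a change of frame. You correctly identified the interior step as the main obstacle, but the sketch as written does not close it.
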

\begin{proof}
	The inequality~\eqref{est:GloSob} is due to~Georgiev~\cite{Geor}. We refer to~\cite[Lemma 2.4]{Geor} for a complete proof and also refer to~\cite[Lemma 2.5 and Remark 2.6]{DongMaYuan} for an explanation.
	The inequality~\eqref{est:GloSobr} is a consequence of the standard Sobolev inequality on circle $S^{1}$. We refer to~\cite[Proposition 1]{Klainerman86} for a complete proof.
\end{proof}

\subsection{Estimates for the Dirac spinor}\label{Se:Dirac}
In this subsection, we present the energy estimate and extra pointwise decay of gradient for the Dirac spinor. We start with the ghost weight energy (introduced by Alinhac in \cite{Alinhac01b}) for all $\phi=\phi(t,x):\R^{1+2}\to \C^{2}$, denoting
\begin{equation}\label{equ:Denergy}
\mathcal{E}^{D}(t,\phi)=\int_{\R^{2}}|\phi(t,x)|^{2}\d x+\int_{0}^{t}\int_{\R^{2}}\frac{\left|[\phi]_{-}(s,x)\right|^{2}}{\langle r-s \rangle^{\frac{6}{5}}}\d x\d s.
\end{equation}

\begin{lemma}\label{le:EneryDirac}
	Suppose $\phi=\phi(t,x)$ is the solution to the Cauchy problem
	\begin{equation}\label{equ:Dirac}
	-i\gamma^{\mu}\partial_{\mu}\phi=G\quad \mbox{with}\quad \phi (0,x)=\phi_{0}(x),
	\end{equation}
	where $G=G(t,x):\R^{1+2}\to \C^{2}$ is a sufficiently nice function, 
	then the following estimates are true.
	\begin{enumerate}
		\item {\rm{Ghost weight estimate.}} It holds
	\begin{equation}\label{est:Energyphi1}
	\begin{aligned}
	\mathcal{E}^{D}(t,\phi)^{\frac{1}{2}}&\lesssim \left(\int_{\R^{2}}|\phi_{0}(x)|^{2}\d x\right)^{\frac{1}{2}}+\int_{0}^{t}\|G(s,x)\|_{L_{x}^{2}} \d s, 
	\end{aligned}
	\end{equation}
    \begin{equation}\label{est:Energyphi2}
	\begin{aligned}
	\mathcal{E}^{D}(t,\phi)&\lesssim \int_{\R^{2}}|\phi_{0}(x)|^{2}\d x+\int_{0}^{t}\int_{\R^{2}}\left|\phi^{*}(s,x)\gamma^{0}G(s,x)\right|\d x \d s.
	\end{aligned}
	\end{equation}
	
	\item {\rm {Modified ghost weight estimate.}} For $0<\delta_{1}\ll 1$, we have 
	\begin{equation}\label{est:ModiGhostphi}
	\begin{aligned}
	&\int_{0}^{t}\langle s\rangle^{-\delta_{1}}\int_{\R^{2}}\frac{\left|[\phi]_{-}(s,x)\right|^{2}}{\langle r-s \rangle^{\frac{6}{5}}}\d x\d s\\
	&\lesssim \int_{0}^{t}|\phi_{0}(x)|^{2}\d x+\int_{0}^{t}\langle s\rangle^{-\delta_{1}}\int_{\R^{2}}\left|\phi^{*}(s,x)\gamma^{0}G(s,x)\right|\d x \d s.
	\end{aligned}
	\end{equation}
	
	\item {\rm{Energy estimate on an exterior region.}} It holds
	\begin{equation}\label{est:cutenergyphi}
	\begin{aligned}
	&\left\|\langle r-t\rangle\chi(r-2t)\phi(t,x)\right\|_{L_{x}^{2}}\\
	&\lesssim \left\|\langle r\rangle\phi_{0}(x)\right\|_{L_{x}^{2}}+\int_{0}^{t}\left\|\langle r-s\rangle\chi(r-2s)G(s,x)\right\|_{L_{x}^{2}}\d s.
	\end{aligned}
	\end{equation}
\end{enumerate}
\end{lemma}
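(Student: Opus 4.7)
The plan is to derive all three estimates from a single weighted divergence identity for the conserved Dirac current $J^{\mu}:=\phi^{*}\gamma^{0}\gamma^{\mu}\phi$. Using the Hermiticity $(\gamma^{0}\gamma^{\mu})^{*}=\gamma^{0}\gamma^{\mu}$ (an immediate consequence of~\eqref{equ:gamma}) together with~\eqref{equ:Dirac}, one obtains
\begin{equation*}
\partial_{\mu}J^{\mu}=-2\,\mathrm{Im}\big(\phi^{*}\gamma^{0}G\big),\qquad J^{0}=|\phi|^{2}.
\end{equation*}
The essential algebraic input is the pointwise identity
\begin{equation*}
|\phi|^{2}-\omega_{a}J^{a}=\tfrac{1}{2}\big|[\phi]_{-}\big|^{2},
\end{equation*}
obtained by expanding $[\phi]_{-}^{*}[\phi]_{-}$ and using $\gamma^{0}\gamma^{a}\gamma^{0}\gamma^{b}=-\gamma^{a}\gamma^{b}$ together with $\{\gamma^{a},\gamma^{b}\}=-2\delta_{ab}I_{2}$. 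This is exactly what transfers the transversal part of the Dirac current into the ghost-weight integrand.

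For~\eqref{est:Energyphi2} I would multiply the divergence identity by the Alinhac weight $W(t,x)=e^{q(r-t)}$ with $q'(\rho)=\langle\rho\rangle^{-6/5}$, so $W\simeq 1$, and compute
\begin{equation*}
(\partial_{t}W)J^{0}+(\partial_{a}W)J^{a}=-\tfrac{1}{2}\langle r-t\rangle^{-6/5}\,W\,\big|[\phi]_{-}\big|^{2}.
\end{equation*}
Integrating $\partial_{\mu}(WJ^{\mu})=(\partial_{\mu}W)J^{\mu}+W\partial_{\mu}J^{\mu}$ over $[0,t]\times\R^{2}$ and bounding $|\mathrm{Im}(\phi^{*}\gamma^{0}G)|\le|\phi^{*}\gamma^{0}G|$ yields~\eqref{est:Energyphi2} at once. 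Estimate~\eqref{est:Energyphi1} then follows from the square-root Gronwall trick: from $f(t)^{2}\le f(0)^{2}+2\int_{0}^{t}f(s)g(s)\,\d s$ applied with $f=(\mathcal{E}^{D})^{1/2}$ and $g=\|G\|_{L^{2}_{x}}$ one deduces $f(t)\le f(0)+\int_{0}^{t}g\,\d s$.

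For~\eqref{est:ModiGhostphi} I would take the differential form of the ghost-weight inequality, multiply it by $\langle s\rangle^{-\delta_{1}}$, and integrate by parts in $s$ on the term $\langle s\rangle^{-\delta_{1}}\tfrac{d}{ds}\|\phi\|_{L^{2}_{x}}^{2}$. Since $\partial_{s}\langle s\rangle^{-\delta_{1}}=-\delta_{1}s\langle s\rangle^{-\delta_{1}-2}\le 0$, both the boundary contribution $\langle t\rangle^{-\delta_{1}}\|\phi(t)\|_{L^{2}_{x}}^{2}$ and the bulk contribution $\delta_{1}\int_{0}^{t}s\langle s\rangle^{-\delta_{1}-2}\|\phi\|_{L^{2}_{x}}^{2}\,\d s$ appear with a favourable sign and can simply be dropped, leaving~\eqref{est:ModiGhostphi}.

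The one step requiring genuine bookkeeping, and what I expect to be the main obstacle, is~\eqref{est:cutenergyphi}. My plan is to run exactly the same scheme with the heavier weight $W=e^{q(r-t)}\langle r-t\rangle^{2}\chi^{2}(r-2t)$. Using $\omega_{a}J^{a}=|\phi|^{2}-\tfrac{1}{2}|[\phi]_{-}|^{2}$ and $\partial_{a}W=\omega_{a}\partial_{r}W$, the flux term becomes
\begin{equation*}
(\partial_{\mu}W)J^{\mu}=(\partial_{t}W+\partial_{r}W)|\phi|^{2}-\tfrac{1}{2}(\partial_{r}W)\big|[\phi]_{-}\big|^{2}.
\end{equation*}
A direct computation shows that the ghost-weight contributions and the $\langle r-t\rangle^{2}$-derivative contributions cancel in the combination $\partial_{t}W+\partial_{r}W$, yielding
\begin{equation*}
\partial_{t}W+\partial_{r}W=-2\langle r-t\rangle^{2}\chi(r-2t)\chi'(r-2t)e^{q(r-t)}\le 0;
\end{equation*}
meanwhile, on $\mathrm{supp}\,\chi(r-2t)$ one has $r-t\ge t+1>0$, so every individual term in $\partial_{r}W$ is non-negative. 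Hence both sign checks pass, $(\partial_{\mu}W)J^{\mu}\le 0$, and integrating in time followed by one more square-root Gronwall closes~\eqref{est:cutenergyphi}. The key subtlety is precisely this sign-check: the cut-off at $r-2t$ (rather than at $r-t$) is what guarantees $r-t>0$ on the support of the weight, and in turn both $\partial_{r}W\ge 0$ and the ability to discard $\partial_{t}W+\partial_{r}W$.
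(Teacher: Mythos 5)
Your proposal is correct and follows essentially the same route as the paper: the key algebraic identity $|\phi|^2-\omega_a\phi^*\gamma^0\gamma^a\phi=\tfrac12|[\phi]_-|^2$, the exponential ghost weight, the sign checks on $\partial_tW+\partial_rW$ and $\partial_rW$ for the exterior weight, and dropping the favourably-signed terms after integration by parts in $s$ for the modified ghost weight are all the same ingredients the paper uses. The only cosmetic differences are that you phrase the computation in terms of the current $J^\mu=\phi^*\gamma^0\gamma^\mu\phi$ and a single weight template (and keep the factor $e^{q(r-t)}$ in the exterior weight, which the paper drops — it is harmless either way since $e^{q}\simeq1$), and that you derive~\eqref{est:Energyphi1} from~\eqref{est:Energyphi2} by the square-root Gronwall argument rather than directly from the integrated identity.
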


\begin{remark}
	The advantage of \eqref{est:Energyphi2} over \eqref{est:Energyphi1} is that a structure can be discovered to benefit us with the aid of Lemma \ref{lem:hidden}. This was explained with an example in Section \ref{subsec:key-idea}. Roughly speaking, the estimate \eqref{est:cutenergyphi} provides us with an extra $\langle r-t\rangle^{-1}$ decay, which automatically turns to the strong $\langle r+t\rangle^{-1}$ decay in the exterior region $\{ r \geq 2t + 3 \}$.
\end{remark}

\begin{proof}[Proof of Lemma~\ref{le:EneryDirac}]
	Proof of (i). For $(t,x)\in [0,\infty)\times \R^{2}$, we set 
	\begin{equation*}
	p(t,x)=\int_{-\infty}^{r-t}\langle \tau\rangle^{-\frac{6}{5}}\d \tau,\quad \mbox{where}\ r=|x|.
	\end{equation*}
	Multiplying $i e^p  \phi^* \gamma^0$ to both sides of the equation~\eqref{equ:Dirac}, we have
	\begin{equation} \label{eq:ghostdirac1}
	e^p \phi^* \partial_t \phi + e^p \phi^*\gamma^0\gamma^a \partial_a \phi = i e^p \phi^*\gamma^0G.
	\end{equation}
	Taking the complex conjugate of \eqref{eq:ghostdirac1} and using~\eqref{equ:gamma}, one can obtain 
	\begin{equation} \label{eq:ghostdirac2}
	e^p (\partial_t \phi)^* \phi + e^p (\partial_a \phi)^*\gamma^0\gamma^a \phi = -i e^p G^*\gamma^0 \phi.
	\end{equation}
	Combining \eqref{eq:ghostdirac1} and \eqref{eq:ghostdirac2}, we see that
	\begin{align*}
	\partial_t\left(e^p|\phi|^2\right) + \partial_a(e^p\phi^*\gamma^0\gamma^a \phi) +  \frac{e^p}{\langle r-t\rangle^{\frac{6}{5}}} (|\phi|^{2} - \omega_a \phi^*\gamma^0\gamma^a\phi) =-2e^p {\rm{Im}}(\phi^*\gamma^0G).
	\end{align*}
	On the other hand, by~\eqref{equ:gamma} and the definition of $[\phi]_{-}$ in~\eqref{def:phi-}, we find
	\begin{align*}
	|[\phi]_{-}|^2 & = (\phi-\omega_a \gamma^0\gamma^a \phi)^*(\phi-\omega_b\gamma^0\gamma^b \phi) \\
	&=\left(\phi^{*}-\omega_{a}\phi^{*}\gamma^{0}\gamma^{a}\right)(\phi-\omega_b\gamma^0\gamma^b \phi)\\
	& =  2|\phi|^{2} - 2\omega_a \phi^* \gamma^0\gamma^a \phi - \sum_{0<a<b} \omega_a\omega_b \phi^*(\gamma^a\gamma^b + \gamma^b\gamma^a) \phi.
	\end{align*}
	Note that, the last term in the above identity vanishes, which implies that
	\begin{equation}\label{equ:phi-}
	|[\phi]_{-}|^2=2(|\phi|^{2} - \omega_a \phi^* \gamma^0\gamma^a \phi).
	\end{equation}
	Combining the above identities, we obtain
	\begin{align*} 
	\partial_t(e^p|\phi|^2) + \partial_a(e^p\phi^*\gamma^0\gamma^a \phi) +  \frac{e^p}{2\langle r-t\rangle^{\frac{6}{5}}} \left|[\phi]_{-}\right|^2 =-2e^p {\rm{Im}}(\phi^*\gamma^0G).
	\end{align*}
	Integrating the above identity over the domain $[0,t]\times \R^{2}$ and then using the fact that $e^{p}\sim 1$, we obtain~\eqref{est:Energyphi1}--\eqref{est:Energyphi2}.
	
	\smallskip
	Proof of (ii). Here, we apply the multiplier $i \langle t\rangle^{-\delta_{1}} e^p  \phi^* \gamma^0$ instead of $i e^p  \phi^* \gamma^0$. Using a similar argument as in the proof of Lemma~\ref{le:EneryDirac} (i), we have
	\begin{align*} 
	&\partial_t(\langle t\rangle^{-\delta_{1}}e^p|\phi|^2) + \partial_a(\langle t\rangle^{-\delta_{1}}e^p\phi^*\gamma^0\gamma^a \phi) +  \frac{\langle t\rangle^{-\delta_{1}}e^p}{2\langle r-t\rangle^{\frac{6}{5}}} \left|[\phi]_{-}\right|^2 \\
	&+\delta_{1}t\langle t\rangle^{-(\delta_{1}+2)}e^{p}|\phi|^{2}=-2\langle t\rangle^{-\delta_{1}}e^p {\rm{Im}}(\phi^*\gamma^0G).
	\end{align*}
	Integrating the above identity over the domain $[0,t]\times \R^{2}$ and then using again the fact that $e^{p}\sim 1$, we obtain~\eqref{est:ModiGhostphi}.
	
	\smallskip
	Proof of (iii).	Multiplying $i\langle r-t\rangle^{2}\chi^{2}(r-2t)\phi^{*}\gamma^{0}$ to both sides of the equation~\eqref{equ:Dirac}, 
	\begin{equation} \label{equ:chidirac}
 \langle r-t\rangle^{2}\chi^{2}(r-2t)\left(\phi^* \partial_t \phi + \phi^*\gamma^0\gamma^a \partial_a \phi\right) = i \langle r-t\rangle^{2}\chi^{2}(r-2t)\phi^*\gamma^0G.
	\end{equation}
	Taking the complex conjugate of \eqref{equ:chidirac} and then using~\eqref{equ:gamma}, we deduce that
	\begin{equation} \label{equ:chidirac2}
	\langle r-t\rangle^{2}\chi^{2}(r-2t)\left((\pt\phi)^* \phi + (\partial_{a}\phi)^*\gamma^0\gamma^a \phi\right) = -i \langle r-t\rangle^{2}\chi^{2}(r-2t)G^*\gamma^0\phi.
	\end{equation}
	Combining~\eqref{equ:chidirac} and~\eqref{equ:chidirac2}, we get 
	\begin{equation*}
	\langle r-t\rangle^{2}\chi^{2}(r-2t)\left(\pt|\phi|^{2} + \partial_{a}(\phi^*\gamma^0\gamma^a \phi)\right) =-2 \langle r-t\rangle^{2}\chi^{2}(r-2t){\rm{Im}}(\phi^*\gamma^0G).
	\end{equation*}
	Then, by an elementary computation, we have 
	\begin{equation*}
	\begin{aligned}
	&\langle r-t\rangle^{2}\chi^{2}(r-2t)\pt|\phi|^{2}\\
	&=\pt(\langle r-t\rangle^{2}\chi^{2}(r-2t)|\phi|^{2})\\
	&+2\chi(r-2t)\left[(r-t)\chi(r-2t)+2\langle r-t\rangle^{2}\chi'(r-2t)\right]|\phi|^{2},\quad \quad 
	\end{aligned}
	\end{equation*}
	and 
	\begin{equation*}
	\begin{aligned}
	&\langle r-t\rangle^{2}\chi^{2}(r-2t)\partial_{a}\left(\phi^*\gamma^0\gamma^a \phi\right)\\
	&=\partial_{a}(\langle r-t\rangle^{2}\chi^{2}(r-2t)\phi^{*}\gamma^{0}\gamma^{a}\phi)\\
	&-2\chi(r-2t)\left[(r-t)\chi(r-2t)+\langle r-t\rangle^{2}\chi'(r-2t)\right]\omega_{a}\phi^*\gamma^0\gamma^a \phi.
	\end{aligned}
		\end{equation*}
	Combining the above identities with~\eqref{equ:phi-}, we find
	\begin{equation*}
	\begin{aligned}
	&\pt \left(\langle r-t\rangle^{2}\chi^{2}(r-2t)|\phi|^{2}\right)+\partial_{a}(\langle r-t\rangle^{2}\chi^{2}(r-2t)\phi^{*}\gamma^{0}\gamma^{a}\phi)\\
	&+2\chi(r-2t)\left[(r-t)\chi(r-2t)+\langle r-t\rangle^{2}\chi'(r-2t)\right]\left|\left[\phi\right]_{-}\right|^{2}\\
	&+2\langle r-t\rangle^{2}\chi(r-2t)\chi'(r-2t)|\phi|^{2}=-2 \langle r-t\rangle^{2}\chi^{2}(r-2t){\rm{Im}}(\phi^*\gamma^0G).
	\end{aligned}
	\end{equation*}
	Hence, using the fact that $\chi'$, $\chi\ge 0$, we have 
	\begin{equation*}
	\begin{aligned}
	&\pt \left(\langle r-t\rangle^{2}\chi^{2}(r-2t)|\phi|^{2}\right)+\partial_{a}(\langle r-t\rangle^{2}\chi^{2}(r-2t)\phi^{*}\gamma^{0}\gamma^{a}\phi)\\
	&\le 2 \langle r-t\rangle^{2}\chi^{2}(r-2t)\left|{\rm{Im}}(\phi^*\gamma^0G)\right|.
	\end{aligned}
	\end{equation*}
	Integrating the above inequality over the domain $[0,t]\in \R^{2}$ and then using the Cauchy-Schwarz inequality, we obtain~\eqref{est:cutenergyphi}.
	\end{proof}

Second, we explore the extra decay inside of a cone for the gradient of a Dirac spinor; see also \cite[Lemma 4.3]{DongWyatt}.
\begin{lemma}\label{le:Diracdecay}
	Suppose $\phi=\phi(t,x)$ is the solution to the equation
	\begin{equation}\label{equ:dirac}
	-i\gamma^{\mu}\partial_{\mu}\phi(t,x)=G(t,x),
	\end{equation}
	where $G=G(t,x):\R^{1+2}\to \C^{2}$ is a sufficiently nice function, 
	then we have 
	\begin{equation}\label{est:decaypphi}
	\left|\partial \phi\right|\lesssim\frac{1}{\langle t-r\rangle}\big(\big|\widehat{\Gamma}\phi\big|+\left|\phi\right|\big)+\frac{t}{\langle t-r\rangle}|G|,\quad \quad \mbox{for}\ r\le 3t+3.
	\end{equation}
	\end{lemma}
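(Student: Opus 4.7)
The plan is to use the Dirac equation to eliminate $\partial_t \phi$ in favour of the boost vector fields, and then to invert the resulting matrix coefficient via the Clifford relations so that a $\langle t-r\rangle^{-1}$ factor appears naturally. First, I rewrite \eqref{equ:dirac} as $\gamma^0 \partial_t \phi + \gamma^a \partial_a \phi = iG$, multiply through by $t$, and use the identity $t\partial_a = L_a - x_a \partial_t$ (which follows from $L_a = t\partial_a + x_a \partial_t$) to isolate $\partial_t \phi$:
\[
  (t\gamma^0 - x^a \gamma^a) \,\partial_t \phi = i t G - \gamma^a L_a \phi.
\]

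The key algebraic step is the observation that the matrix $M := t\gamma^0 - x^a \gamma^a$ satisfies $M^2 = (t^2 - r^2) I_2$. Indeed, from \eqref{equ:gamma} one has $(\gamma^0)^2 = I_2$ and $\{\gamma^0,\gamma^a\} = 0$, which kills the cross-terms $tx^a\{\gamma^0,\gamma^a\}$, while $\{\gamma^a,\gamma^b\} = -2\delta^{ab} I_2$ gives $(x^a\gamma^a)^2 = -r^2 I_2$. Multiplying the identity above by $M$ and using $|M| \lesssim t+r$ together with the decomposition $L_a = \widehat{L}_a + \tfrac{1}{2}\gamma^0 \gamma^a$ yields
\[
  |\partial_t \phi| \lesssim \frac{(t+r)\bigl(t|G| + |\widehat{\Gamma}\phi| + |\phi|\bigr)}{|t-r|(t+r)} = \frac{t|G| + |\widehat{\Gamma}\phi| + |\phi|}{|t-r|}, \qquad t \neq r.
\]

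To upgrade $|t-r|^{-1}$ to the Japanese-bracket version $\langle t-r\rangle^{-1}$, I combine this bound (valid when $|t-r| \geq 1$) with the trivial inclusion $\partial_t \in \widehat{\Gamma}$ (giving $|\partial_t\phi| \leq |\widehat{\Gamma}\phi|$ when $|t-r| \leq 1$). For the spatial derivatives I write $\partial_a \phi = t^{-1}(L_a \phi - x_a \partial_t \phi)$; in the interior region $r \leq 3t+3$ with $t \gtrsim 1$ one has $r/t \lesssim 1$ and, since $|t-r| \leq 2t+3$ there, also $1/t \lesssim 1/\langle t-r\rangle$. Combined with the bound on $\partial_t\phi$, this gives the claim for $\partial_a \phi$. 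The small-time range $t \lesssim 1$ (where $r \leq 3t+3$ forces $|t-r|$ and $r$ to be bounded) is covered by the trivial inclusion $\partial_a \in \widehat{\Gamma}$, since $\langle t-r\rangle$ is then uniformly bounded.

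The principal obstacle is the matrix identity $M^2 = (t^2 - r^2) I_2$, which depends crucially on the specific anticommutation structure of the Dirac matrices; everything else is a careful but routine case analysis near the light cone $r = t$ and at small times, needed to merge the apparently singular $|t-r|^{-1}$ bound with the trivial $\widehat{\Gamma}$-bound into a uniform estimate with Japanese brackets.
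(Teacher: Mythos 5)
Your proof is correct and follows essentially the same route as the paper's: after eliminating $\partial_a$ in favour of $L_a$ and $\partial_t$, both arguments multiply by the Clifford element $t\gamma^0 - x^a\gamma^a$ (the paper works with the normalized $\gamma^0 - (x_a/t)\gamma^a$, which gives the identical factor $(t-r)(t+r)/t^2$), invoke the same squaring identity from the anticommutation relations, and then patch the small-time/near-cone regions with the trivial bound $|\partial\phi|\lesssim|\widehat{\Gamma}\phi|$.
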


\begin{remark}
	This result means that a partial derivative $\partial$ on the Dirac field $\phi$ gives an extra $\langle t-r\rangle^{-1}$ decay, provided that the source term $G(t, x)$ has sufficiently fast decay which is exactly satisfied in our case. Moreover, a null condition or a Klein-Gordon field will then turn this $\langle t-r\rangle^{-1}$ decay into a favorable $\langle t+r\rangle^{-1}$ decay.
\end{remark}

\begin{proof}[Proof of Lemma~\ref{le:Diracdecay}] Case I. Let $t\in [0,1]$. It is easy to check that 
	\begin{equation}\label{est:pphi1}
	\left|\partial \phi\right|\lesssim \big|\widehat{{\Gamma}} \phi\big|\lesssim\frac{1}{\langle t-r\rangle}\big|\widehat{\Gamma}\phi\big|+\frac{t}{\langle t-r\rangle}|G|,\quad \quad \mbox{for}\ r\le 3t+3.
	\end{equation}
	
	Case II. Let $t\in (1,\infty)$. Using $\partial_a = t^{-1}L_{a} - (x_a/t) \partial_t$, we can rewrite~\eqref{equ:dirac} as 
	\begin{equation*}
	i\left(\gamma^{0}-\frac{x_{a}}{t}\gamma^{a} \right)\pt\phi=-i\gamma^{a}\frac{L_{a}}{t}\phi-G.
	\end{equation*}
	Multiplying $-i\left(\gamma^{0}-\frac{x_{a}}{t}\gamma^{a} \right)$ to the above identity and then using~\eqref{equ:gamma}, we find
	\begin{equation*}
	\frac{(t-r)(t+r)}{t^{2}}\pt \phi=-\left(\gamma^{0}-\frac{x_{a}}{t}\gamma^{a} \right)\gamma^{b}\frac{L_{b}}{t}\phi+i\left(\gamma^{0}-\frac{x_{a}}{t}\gamma^{a} \right)G,
	\end{equation*}
	which implies
	\begin{equation}\label{est:pphi2}
	|\pt \phi|\lesssim \frac{1}{\langle t-r\rangle}\big|\Gamma\phi\big|+\frac{t}{\langle t-r\rangle}|G|,\quad \quad \mbox{for}\ r\le 3t+3.
	\end{equation}
	Hence, using again $\partial_a = t^{-1}L_{a} - (x_a/t) \partial_t$, we have
	\begin{equation}\label{est:pphi3}
	\left|\partial_{a} \phi\right|\lesssim \frac{1}{t}|L_{a}\phi|+|\pt \phi|\lesssim \frac{1}{\langle t-r\rangle}\big|\Gamma\phi\big|+\frac{t}{\langle t-r\rangle}|G|,\quad \quad \mbox{for}\ r\le 3t+3.
	\end{equation}
	We see that~\eqref{est:decaypphi} follows from~\eqref{est:hatGa},~\eqref{est:pphi1},~\eqref{est:pphi2} and~\eqref{est:pphi3}.
	\end{proof}
\subsection{Estimates for the wave-type fields}\label{Se:Wave}
In this subsection, we recall several different types of energy estimates, the $L_{x}^{2}$ norm estimate and the pointwise decay for the 2D wave or Klein-Gordon equations, respectively. First, we introduce the following standard energy $\mathcal{E}_{1}$ and Alinhac's ghost weight energy $\mathcal{G}_{1}$,
\begin{equation}\label{def:KGenergy}
\begin{aligned}
\mathcal{E}_{1}(t,u)&=\int_{\R^{2}}\left((\pt u)^{2}+(\partial_{1} u)^{2}+(\partial_{2} u)^{2}+u^{2}\right)(t,x)\d x,\\
\mathcal{G}_{1}(t,u)&=\mathcal{E}_{1}(t,u)+\sum_{a=1,2}\int_{0}^{t}\int_{\R^{2}}\frac{|G_{a}u|^{2}}{\langle r-s\rangle^{\frac{6}{5}}}\d x \d s+\int_{0}^{t}\int_{\R^{2}}\frac{u^{2}}{\langle r-s\rangle^{\frac{6}{5}}}\d x \d s.
\end{aligned}
\end{equation}
We recall the following different types of energy estimates for the 2D Klein-Gordon equation. The proofs are similar to~\cite[Theorem 6.4]{Alin} and~\cite[Proposition 3.5]{DongMa}, but they are provided below for the sake of completeness and the better readability.
\begin{lemma}[\cite{Alinhac01b,Alin,DongMa}]\label{le:energywave}
	Suppose $u=u(t,x)$ is the solution to the Cauchy problem
	\begin{align}\label{equ:KGenergy}
	-\Box u+u=F\quad \mbox{with}\quad (u,\pt u)_{|t=0}=(u_{0},u_{1}),
	\end{align}
		where $F=F(t,x):\R^{1+2}\to \R$ is a sufficiently nice function, 
	then the following estimates are true.
	\begin{enumerate}
	\item {\rm{Ghost weight energy estimate.}} It holds
	\begin{equation}\label{est:EnergyKG}
	\mathcal{G}_{1}(t,u)^{\frac{1}{2}}\lesssim \mathcal{E}_{1}(0,u)^{\frac{1}{2}}+\int_{0}^{t}\|F(s,x)\|_{L_{x}^{2}}\d s.
	\end{equation} 
	
	\item {\rm{Modified ghost weight estimate.}} For $0<\delta_{1}\ll 1$, we have 
	\begin{equation}\label{est:ModighostKG}
	\begin{aligned}
	&\sum_{a=1,2}\int_{0}^{t}\langle s\rangle^{-\delta_{1}}\int_{\R^{2}}\left(\frac{u^{2}}{\langle r-s\rangle^{\frac{6}{5}}}+\frac{|G_{a}u|^{2}}{\langle r-s\rangle^{\frac{6}{5}}}\right)\d x \d s\\
	&\lesssim \mathcal{E}_{1}(0,u)+\int_{0}^{t}\langle s\rangle^{-\delta_{1}}\|F(s,x)\|_{L_{x}^{2}}\|\pt u(s,x)\|_{L_{x}^{2}} \d s.
	\end{aligned}
	\end{equation}
	
	\item {\rm{Energy estimate on an exterior region.}} It holds
	\begin{equation}\label{est:cutenergyv}
	\begin{aligned}
	&\left\|\langle r-t\rangle\chi(r-2t)u(t,x)\right\|_{L_{x}^{2}}+\left\|\langle r-t\rangle\chi(r-2t)\partial u(t,x)\right\|_{L_{x}^{2}}\\
	&\lesssim \left\|\langle r\rangle u_{0}(x)\right\|_{H_{x}^{1}}+ \| \langle r\rangle u_{1}(x) \|_{L_{x}^{2}} + \int_{0}^{t}\left\|\langle r-s\rangle\chi(r-2s)F(s,x)\right\|_{L_{x}^{2}}\d s.
	\end{aligned}
	\end{equation}
\end{enumerate}
\end{lemma}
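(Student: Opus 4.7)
The plan is to mirror the ghost-weight strategy already deployed for the Dirac spinor in Lemma~\ref{le:EneryDirac}, adapted to the Klein--Gordon operator $\pt^{2}-\Delta+1$ (note that under the convention $\eta=\mathrm{diag}(-1,1,1)$ the equation reads $\pt^{2}u-\Delta u+u=F$). For each of the three statements I would pick an appropriately weighted multiplier, contract it with the equation, and integrate by parts on $[0,t]\times\R^{2}$, isolating a total time derivative (which reconstructs the energy), pure spatial divergences (which vanish upon spatial integration), and a spacetime residual which must be shown to be nonnegative.

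For (i), I would reuse the ghost weight $p(t,x)=\int_{-\infty}^{r-t}\langle\tau\rangle^{-6/5}\,\d\tau$ introduced in the proof of Lemma~\ref{le:EneryDirac} and multiply the equation by $e^{p}\pt u$. Using $\pt p=-\langle r-t\rangle^{-6/5}$ and $\partial_{a}p=\omega_{a}\langle r-t\rangle^{-6/5}$, a direct manipulation yields
\begin{equation*}
\tfrac{1}{2}\pt\!\left(e^{p}\big((\pt u)^{2}+|\nabla u|^{2}+u^{2}\big)\right)-\partial_{a}\!\left(e^{p}\pt u\,\partial_{a}u\right)+\tfrac{e^{p}}{2\langle r-t\rangle^{6/5}}\mathcal{Q}=e^{p}\pt u\,F,
\end{equation*}
where $\mathcal{Q}=(\pt u)^{2}+2\omega_{a}\pt u\,\partial_{a}u+|\nabla u|^{2}+u^{2}$. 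The algebraic key is the completion of squares
\begin{equation*}
(\pt u)^{2}+2\omega_{a}\pt u\,\partial_{a}u+|\nabla u|^{2}=\sum_{a=1,2}|G_{a}u|^{2},
\end{equation*}
which follows from $G_{a}u=\partial_{a}u+\omega_{a}\pt u$ and $\omega_{1}^{2}+\omega_{2}^{2}=1$; it recasts $\mathcal{Q}$ as $\sum_{a}|G_{a}u|^{2}+u^{2}\ge 0$, exactly the density appearing in $\mathcal{G}_{1}$. After integrating over $[0,t]\times\R^{2}$, using $e^{p}\sim 1$, and bounding the source via Cauchy--Schwarz, the estimate (i) follows by dividing through by $\mathcal{G}_{1}(t,u)^{1/2}$. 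For (ii), I would run the same computation with the multiplier $\langle t\rangle^{-\delta_{1}}e^{p}\pt u$; this produces on the left-hand side an additional term $\tfrac{1}{2}\delta_{1}t\langle t\rangle^{-\delta_{1}-2}e^{p}\big((\pt u)^{2}+|\nabla u|^{2}+u^{2}\big)$ which is nonnegative and may be discarded, and a Cauchy--Schwarz on the source then yields the weighted spacetime bound claimed.

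For (iii) I would follow the template of the Dirac exterior estimate proved in Lemma~\ref{le:EneryDirac} above, using the multiplier $\langle r-t\rangle^{2}\chi^{2}(r-2t)\pt u$. The resulting integration-by-parts identity produces the total time derivative of the weighted energy density $\langle r-t\rangle^{2}\chi^{2}(r-2t)\big((\pt u)^{2}+|\nabla u|^{2}+u^{2}\big)$, a pure spatial divergence, a collection of boundary-type terms carrying the factor $\langle r-t\rangle^{2}\chi(r-2t)\chi'(r-2t)\ge 0$ which can be dropped, and lower-order pieces involving $(r-t)\chi^{2}(r-2t)$ which reassemble via the same good-derivative identity. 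The main obstacle will be the bookkeeping in (iii): one has to verify carefully that every term arising from $\pt$ and $\partial_{a}$ falling on $\langle r-t\rangle^{2}\chi^{2}(r-2t)$ either cancels into the good quadratic form $\sum_{a}|G_{a}u|^{2}+u^{2}$, carries the favorable sign from $\chi,\chi'\ge 0$, or can be absorbed by the weighted energy itself so that Gronwall's inequality~\eqref{est:Gron} closes the estimate. Apart from this bookkeeping, no ideas beyond those already exhibited in the Dirac case of Lemma~\ref{le:EneryDirac} are required; the extra mass term $u^{2}$ is in fact an asset, providing an additional nonnegative contribution to the energy density on the left.
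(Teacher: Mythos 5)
Your proposal is correct and follows essentially the same strategy as the paper for all three parts: multiply by a weighted $\pt u$ (with weight $e^{p}$, $\langle t\rangle^{-\delta_{1}}e^{p}$, and $\langle r-t\rangle^{2}\chi^{2}(r-2t)$ respectively), integrate by parts, and identify the good-derivative structure $\sum_{a}|G_{a}u|^{2}+u^{2}$. One small remark on (iii): your hedge that some residual terms might need to be ``absorbed by the weighted energy... so that Gronwall's inequality closes the estimate'' turns out to be unnecessary once the bookkeeping is carried out. The key sign observation you leave implicit is that on the support of $\chi(r-2t)$ one has $r\geq 2t+1>t$, hence $(r-t)\chi^{2}(r-2t)\geq 0$; together with $\chi\chi'\geq 0$, every residual term recombines into a nonnegative multiple of either $\sum_{a}|G_{a}u|^{2}+u^{2}$ or $|\partial u|^{2}+u^{2}$ and can simply be discarded. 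After that, Cauchy--Schwarz on the right-hand side and the standard division-by-$\sup E^{1/2}$ step finish the argument without any Gronwall iteration.
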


\begin{proof}
	Proof of (i). For $(t,x)\in [0,\infty)\times \R^{2}$, we set 
	\begin{equation*}
	p(t,x)=\int_{-\infty}^{r-t}\langle \tau \rangle^{-\frac{6}{5}}\d \tau,\quad \mbox{where}\ r=|x|.
	\end{equation*}
	Multiplying $e^{p}\partial_{t}u$ to both sides of the equation~\eqref{equ:KGenergy}, we have 
	\begin{equation*}
	\begin{aligned}
	e^{p}(-\Box u+u)\pt u
	&=\frac{1}{2}\pt \left(e^{p}\left(|\partial u|^{2}+u^{2}\right)\right)-\partial^{a}\left(e^{p}\pt u\partial_{a}u\right)\\
	&+\frac{1}{2}\frac{e^{p}}{\langle t-r\rangle^{\frac{6}{5}}}\left((G_{1}u)^{2}+(G_{2}u)^{2}+u^{2}\right)=e^{p}F\pt u.
	\end{aligned}
	\end{equation*}
	Integrating the above identity over the domain $[0,t]\times \R^{2}$ and then using the fact that $e^{p}\sim 1$, we obtain~\eqref{est:EnergyKG}.
	
	\smallskip
	Proof of (ii). Here, we apply the multiplier $\langle t\rangle^{-\delta_{1}}e^{p}\pt u$ instead of $e^{p}\partial_{t}u$. By an elementary computation, we have 
	\begin{equation*}
	\begin{aligned}
		&\langle t\rangle^{-\delta_{1}}e^{p}(-\Box u+u)\pt u\\
		&=\frac{1}{2}\pt \left[\langle t\rangle^{-\delta_{1}}e^{p}\left(|\partial u|^{2}+u^{2}\right)\right]+\frac{\delta_{1}}{2}t\langle t\rangle^{-(\delta_{1}+2)}e^{p}\left(|\partial u|^{2}+u^{2}\right)\\
		&+\frac{1}{2}\frac{\langle t\rangle^{-\delta_{1}}e^{p}}{\langle t-r\rangle^{\frac{6}{5}}}\left((G_{1}u)^{2}+(G_{2}u)^{2}+u^{2}\right)-\partial^{a}\left(\langle t\rangle^{-\delta_{1}}e^{p}\pt u\partial_{a}u\right)=\langle t\rangle^{-\delta_{1}}e^{p}F\pt u.
		\end{aligned}
	\end{equation*}
	Integrating the above identity over the domain $[0,t]\times \R^{2}$ and then using again the fact that $e^{p}\sim 1$, we obtain~\eqref{est:ModighostKG}.
	
	\smallskip
	Proof of (iii). Multiplying $\langle r-t\rangle^{2}\chi^{2}(r-2t)\pt u$ to both sides of the equation~\eqref{equ:KGenergy},
	\begin{equation*}
	\begin{aligned}
	&\langle r-t\rangle^{2}\chi^{2}(r-2t)(-\Box u+u)\pt u\\
	&=\frac{1}{2}\pt \left[\langle r-t\rangle^{2}\chi^{2}(r-2t)(|\partial u|^{2}+u^{2})\right]
	-\partial^{a}\left[\langle r-t\rangle^{2}\chi^{2}(r-2t)(\partial_{a}u)(\pt u)\right]\\
	&+\left[(r-t)\chi^{2}(r-2t)+\langle r-t\rangle^{2}\chi'(r-2t)\chi(r-2t)\right]\left((G_{1}u)^{2}+(G_{2}u)^{2}+u^{2}\right)\\
	&+\langle r-t\rangle^{2}\chi'(r-2t)\chi(r-2t)\left(|\partial u|^{2}+u^{2}\right)=\langle r-t\rangle^{2}\chi^{2}(r-2t)F\pt u.
	\end{aligned}
	\end{equation*}
	Hence, using the fact that $\chi'$, $\chi\ge 0$, we have
	\begin{equation*}
	\begin{aligned}
	&\pt \left[\langle r-t\rangle^{2}\chi^{2}(r-2t)(|\partial u|^{2}+u^{2})\right]\\
	&\le 2\langle r-t\rangle^{2}\chi^{2}(r-2t)|F\pt u|+2\partial^{a}\left[\langle r-t\rangle^{2}\chi^{2}(r-2t)(\partial_{a}u)(\pt u)\right].
	\end{aligned}
	\end{equation*}
	Integrating the above inequality over the domain $[0,t]\times \R^{2}$ and then using the Cauchy-Schwarz inequality, we obtain~\eqref{est:cutenergyv}.
	\end{proof}

Then, we recall the pointwise decay for the Klein-Gordon component from~\cite{Geor}.

\begin{theorem}[\cite{Geor}]\label{thm:KGdecay}
	Suppose $u=u(t,x)$ is the solution to the Cauchy problem
	\begin{equation*}
	\begin{aligned}
	-\Box u+u=F\quad \mbox{with}\quad (u,\pt u)_{|t=0}=(u_{0},u_{1}),
	\end{aligned}
	\end{equation*}
	where $F=F(t,x):\R^{1+2}\to \R$ is a sufficiently nice function, then we have 
	\begin{equation}\label{est:KGLini}
	\begin{aligned}
	\langle t+r\rangle|u(t,x)|
	&\lesssim  \sum_{j=0}^{\infty}\sum_{|I|\le 5}\left\|\langle |x|\rangle \chi_{j}(|x|)\Gamma^{I}u(0,x)\right\|_{L_{x}^{2}}\\
	&+\sum_{j=0}^{\infty}\sum_{|I|\le 4}\max_{0\le s\le t}\chi_{j}(s)\left\|\langle s+|x|\rangle\Gamma^{I}F(s,x)\right\|_{L_{x}^{2}}.
	\end{aligned}
	\end{equation}
\end{theorem}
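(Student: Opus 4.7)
The plan is to follow Georgiev's strategy of proving a Klainerman--Sobolev-type pointwise decay for the 2D Klein-Gordon equation \emph{without} using the scaling vector field $S = t\pt + x^a\partial_a$, which does not commute with the mass term. The replacement for $S$ is a Littlewood--Paley decomposition: one localizes the initial data in $|x|$ and the source in the time variable, so that on each dyadic annulus $|x|\sim 2^j$ or time slab $s\sim 2^j$ the scale is explicit and the weights $\langle|x|\rangle$ and $\langle s+|x|\rangle$ in the norms on the right of \eqref{est:KGLini} can absorb the role that $S$ would play in the classical argument.

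By linearity, split $u=u^{(0)}+u^{(F)}$ into the homogeneous and forced parts and handle each separately. Using the partition $\{\chi_j\}$ introduced above, decompose $(u_0,u_1)=\sum_j \chi_j(|x|)(u_0,u_1)$ and $F(s,x)=\sum_j \chi_j(s) F(s,x)$, and write $u^{(0)}_j$, $u^{(F)}_j$ for the corresponding Klein-Gordon evolutions. It suffices to prove the per-piece bounds
\begin{equation*}
\langle t+r\rangle\, \big|u^{(0)}_j(t,x)\big| \lesssim \sum_{|I|\le 5} \left\|\langle|x|\rangle \chi_j(|x|)\, \Gamma^I (u_0,u_1)\right\|_{L_{x}^{2}},
\end{equation*}
\begin{equation*}
\langle t+r\rangle\, \big|u^{(F)}_j(t,x)\big| \lesssim \sum_{|I|\le 4}\, \max_{0\le s\le t} \chi_j(s)\left\|\langle s+|x|\rangle \Gamma^I F(s,\cdot)\right\|_{L_{x}^{2}},
\end{equation*}
and then sum in $j$; since the $\chi_j$ have almost disjoint supports, summing the right-hand sides reproduces the norms appearing in \eqref{est:KGLini}.

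The heart of the argument is this per-piece estimate. For each fixed $j$ one combines three inputs. First, the vector fields $\Gamma=(\partial_\alpha,\Omega,L_1,L_2)$ commute with $-\Box+1$ up to lower-order terms (cf.\ \eqref{est:comm}), so the energy estimate \eqref{est:EnergyKG} gives $L^2_x$-control on $\Gamma^I u^{(0)}_j$ for $|I|\le 5$ directly in terms of the right-hand side. Second, one applies a Klainerman--Sobolev-type pointwise inequality of the form $\langle t+r\rangle\, |v(t,x)| \lesssim \sum_{|I|\le 3} \|\Gamma^I v\|_{L_{x}^{2}}$ valid for Klein-Gordon fields concentrated at spatial scale $2^j$. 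Third --- and this is Georgiev's key observation --- on the annulus $|x|\sim 2^j$ one has $|Sv|\lesssim 2^j |\partial v| \lesssim \langle|x|\rangle |\partial v|$, so the weight $\langle|x|\rangle$ in the norm exactly compensates for the missing scaling generator. For $u^{(F)}_j$ one additionally uses finite speed of propagation to localize the solution near the slab $\{s\sim 2^j\}$, which produces the weight $\langle s+|x|\rangle$ on the right.

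The main obstacle is precisely this per-piece bound: in two spatial dimensions the Klein-Gordon dispersive decay is only $\langle t\rangle^{-1}$, leaving essentially no slack, so one must track weights through the fundamental-solution representation on each dyadic piece very carefully. The technical core is to carry this out so that the only losses are logarithmic in $j$ and are absorbed by the weights $\langle|x|\rangle$ and $\langle s+|x|\rangle$ when summed over $j$.
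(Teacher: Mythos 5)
The paper does not prove Theorem~\ref{thm:KGdecay}; the result is quoted from Georgiev~\cite{Geor} and no proof appears in the text. Your outer scaffold --- split into homogeneous and forced parts, decompose the data dyadically in $|x|$ and the source dyadically in $s$, prove per-piece bounds, and sum --- is indeed the skeleton of Georgiev's argument. But the per-piece bound, which you correctly call the heart of the matter, is not established, and two of the three inputs you offer for it are incorrect.

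The pointwise inequality $\langle t+r\rangle|v|\lesssim\sum_{|I|\le 3}\|\Gamma^I v\|_{L^2_x}$ is not available from $\Gamma=(\partial_\alpha,\Omega,L_a)$; the global Sobolev inequality~\eqref{est:GloSob} gives only $\langle t+r\rangle^{1/2}|v|\lesssim\sum_{|I|\le 3}\|\Gamma^I v\|_{L^2_x}$, and the extra half-power of decay is precisely what the theorem asserts. The gain must come from the equation itself: one writes $u=\Box u+F$ and expresses $\Box$ via $\Gamma$-derivatives weighted by $\langle t-r\rangle/\langle t+r\rangle$ inside the cone (the identity underlying~\eqref{est:decayKG}), thereby trading one power of $\langle t+r\rangle$ for a factor $\langle t-r\rangle$ before invoking~\eqref{est:GloSob}; the dyadic decomposition is what tames that $\langle t-r\rangle$. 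None of this appears in your sketch. Moreover, the observation you single out as Georgiev's key step --- $|Sv|\lesssim 2^j|\partial v|\lesssim\langle|x|\rangle|\partial v|$ on $|x|\sim 2^j$ --- holds only at $t=0$, where $S|_{t=0}=x^a\partial_a$; for $t>0$, $S=t\partial_t+x^a\partial_a$ and $|Sv|$ scales like $(t+r)|\partial v|$, which is not controlled by the spatial scale of the data. Since $[S,-\Box+1]=2\Box\neq 0$, a bound on $Su$ at $t=0$ does not propagate forward in time; this is exactly why $S$ is excluded from the Klein--Gordon commuting vector fields and why the theorem is not a soft consequence of wave-equation machinery. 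Finally, finite speed of propagation does not ``localize the solution near the slab $\{s\sim 2^j\}$'': a temporal localization of the source gives no temporal localization of the solution (which persists for all later $t$), as finite speed converts spatial --- not temporal --- localization of the source to the solution. In short, the mechanism that produces the extra half-power of decay is absent from the proposal, and the step labeled ``Georgiev's key observation'' is false for $t>0$.
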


As a consequence, we have the following simplified version of Theorem~\ref{thm:KGdecay}.
\begin{corollary}\label{coro:KGdecay}
	With the same settings as Theorem~\ref{thm:KGdecay}, let $0<\delta_{1}\ll 1$ and assume 
	\begin{equation}\label{est:assumf}
	\sum_{|I|\le 4}\max_{0\le s\le t}\langle s\rangle^{\delta_{1}}\|\langle s+|x|\rangle \Gamma^{I}F(s,x)\|_{L_{x}^{2}}\le C_{F},
	\end{equation}
	then we have 
	\begin{equation}\label{est:KGLINI}
	\langle t+r\rangle |u(t,x)|\lesssim C_{F}+\sum_{|I|\le 5}\left\| \log (2+|x|)\langle |x|\rangle\Gamma^{I}u(0,x)\right\|_{L_{x}^{2}}.
	\end{equation}
\end{corollary}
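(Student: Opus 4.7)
The plan is to start from the conclusion of Theorem~\ref{thm:KGdecay} and absorb the two dyadic sums over $j$ separately, using the smallness assumption \eqref{est:assumf} for the source term and the logarithmic weight for the initial data.

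First I would handle the source term contribution
\[
S_F := \sum_{j=0}^{\infty}\sum_{|I|\le 4}\max_{0\le s\le t}\chi_{j}(s)\left\|\langle s+|x|\rangle\Gamma^{I}F(s,x)\right\|_{L_{x}^{2}}.
\]
Since $\mathrm{supp}\,\chi_j \subset [2^{j-1},2^{j+1}]$ for $j\ge 1$ and $[0,2]$ for $j=0$, one has $\chi_j(s)\langle s\rangle^{-\delta_1} \lesssim 2^{-j\delta_1}$ uniformly in $s\ge 0$. Pulling out $\langle s\rangle^{-\delta_1}\langle s\rangle^{\delta_1}$ inside the norm and invoking \eqref{est:assumf} gives
\[
S_F \lesssim \sum_{j=0}^{\infty} 2^{-j\delta_1} \sum_{|I|\le 4}\max_{0\le s\le t}\langle s\rangle^{\delta_1}\|\langle s+|x|\rangle\Gamma^{I}F(s,x)\|_{L_x^2}\lesssim C_F,
\]
since $\sum_j 2^{-j\delta_1}<\infty$.

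Second I would handle the data contribution
\[
S_0 := \sum_{j=0}^{\infty}\sum_{|I|\le 5}\left\|\langle |x|\rangle \chi_{j}(|x|)\Gamma^{I}u(0,x)\right\|_{L_{x}^{2}}.
\]
On $\mathrm{supp}\,\chi_j$ one has $\log(2+|x|)\gtrsim \langle j\rangle$, so each term is bounded by $\langle j\rangle^{-1}\|\log(2+|x|)\langle|x|\rangle \chi_j(|x|)\Gamma^I u(0,x)\|_{L_x^2}$. Cauchy--Schwarz in $j$ combined with the finite overlap property $\sum_{j}\chi_j^2 \lesssim 1$ yields
\[
S_0 \lesssim \Bigl(\sum_{j\ge 0}\langle j\rangle^{-2}\Bigr)^{1/2}\sum_{|I|\le 5}\Bigl(\sum_{j\ge 0}\|\log(2+|x|)\langle|x|\rangle \chi_j(|x|)\Gamma^I u(0,x)\|_{L_x^2}^2\Bigr)^{1/2}
\]
which is controlled by $\sum_{|I|\le 5}\|\log(2+|x|)\langle|x|\rangle \Gamma^I u(0,x)\|_{L_x^2}$. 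Combining the two estimates with \eqref{est:KGLini} gives \eqref{est:KGLINI}.

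The only mild subtlety I expect is the bookkeeping in the Cauchy--Schwarz step: one must carefully exploit the logarithmic weight to beat the harmonic divergence of $\sum_j$, and ensure the finite-overlap of $\{\chi_j\}$ is used to replace $\sum_j \chi_j^2$ by a harmless constant. Everything else reduces to straightforward dyadic summation and an application of the hypothesis \eqref{est:assumf}.
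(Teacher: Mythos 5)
Your proof is correct and follows essentially the same route as the paper: split $\chi_j(s)\langle s\rangle^{-\delta_1}\lesssim 2^{-j\delta_1}$ for the geometric summability of the source contribution, and use $\log(2+|x|)\gtrsim j+1$ on $\mathrm{supp}\,\chi_j$ together with Cauchy--Schwarz in $j$ and the finite-overlap property $\sum_j\chi_j^2\lesssim1$ for the data contribution. No gaps; this is the argument the paper gives.
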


\begin{proof}
	First, from~\eqref{est:assumf} and the definition of $\left\{\chi_{j}\right\}_{j=0}^{\infty}$, we have
	\begin{equation*}
	\begin{aligned}
	&\sum_{|I|\le 4}\max_{0\le s\le t}\chi_{j}(s)\left\|\langle s+|x|\rangle \Gamma^{I}F(s,x)\right\|_{L_{x}^{2}}\\
	&\lesssim \sum_{|I|\le 4}\left(\max_{0\le s\le t}\chi_{j}(s)\langle s\rangle^{-\delta_{1}}\right)\left(\max_{0\le s\le t}\langle s\rangle^{\delta_{1}}\|\langle s+|x|\rangle \Gamma^{I}F(s,x)\|_{L_{x}^{2}}\right)\le C_{F}2^{-j\delta_{1}},
	\end{aligned}
	\end{equation*}
	which implies
	\begin{equation*}
	\sum_{j=0}^{\infty}\sum_{|I|\le 4}\max_{0\le s\le t}\chi_{j}(s)\left\|\langle s+|x|\rangle \Gamma^{I}F(s,x)\right\|_{L_{x}^{2}}\lesssim C_{F}\sum_{j=0}^{\infty}2^{-j\delta_{1}}\lesssim \frac{C_{F}}{1-2^{-\delta_{1}}}.
	\end{equation*}
	Second, using again the definition of $\chi_{j}$, we have 
	\begin{equation*}
	\begin{aligned}
	&\sum_{|I|\le 5}\left\|\langle x\rangle \chi_{j}(|x|)\Gamma^{I}u(0,x)\right\|_{L_{x}^{2}}\\
	&\lesssim \sum_{|I|\le 5}\left\|\chi_{j}(|x|)\log (2+|x|)\langle x\rangle \Gamma^{I}u(0,x)\right\|_{L_{x}^{2}}\left\|\log (2+|x|)\textbf{1}_{{\rm{supp}}\chi_{j}}(x)\right\|_{L_{x}^{\infty}}\\
	&\lesssim \frac{1}{j+1}\sum_{|I|\le 5}\left\|\chi_{j}(|x|)\log (2+|x|)\langle x\rangle \Gamma^{I}u(0,x)\right\|_{L_{x}^{2}}.
	\end{aligned}
	\end{equation*}
	Based on the above inequality and the Cauchy-Schwarz inequality, we have 
	\begin{equation*}
	\begin{aligned}
	\sum_{j=0}^{\infty}\sum_{|I|\le 5}\left\|\langle x\rangle \chi_{j}(|x|)\Gamma^{I}u(0,x)\right\|_{L_{x}^{2}}
	\lesssim \sum_{|I|\le 5}\left\|\log (2+|x|)\langle x\rangle \Gamma^{I}u(0,x)\right\|_{L_{x}^{2}}.
	\end{aligned}
	\end{equation*}
	Combining the above estimates with Theorem~\ref{thm:KGdecay}, we obtain~\eqref{est:KGLINI}.
\end{proof}

Next, we recall the extra decay inside of a cone of a Klein-Gordon component from~\cite{DongMa}.
For the sake of completeness and the reader's convenience, we repeat the proof of the following Lemma which is inspired by the previous works~\cite{Hormander,Klainerman93}.
\begin{lemma}[\cite{DongMa,Hormander,Klainerman93}]
	Suppose $u=u(t,x)$ is the solution to the Klein-Gordon equation
	\begin{equation}\label{equ:KG}
	\begin{aligned}
	-\Box u(t,x)+u(t,x)=F(t,x),
	\end{aligned}
	\end{equation}
	where $F=F(t,x):\R^{1+2}\to \R$ is a sufficiently nice function, then we have 
	\begin{equation}\label{est:decayKG}
	|u|\lesssim \frac{\langle t-r\rangle}{\langle t+r\rangle}\left(|\partial\Gamma u|+|\partial u|\right)+|F|,\quad \mbox{for}\ r\le 3t+3.
	\end{equation}
\end{lemma}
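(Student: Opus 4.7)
The plan is to use the Klein-Gordon equation $u = F + \Box u$ to reduce the bound on $u$ to one on $\Box u$: it suffices to show $|\Box u| \lesssim \frac{\langle t-r\rangle}{\langle t+r\rangle}\bigl(|\partial\Gamma u| + |\partial u|\bigr)$ for $r \le 3t+3$. When $r \le 1$, the weight $\langle t-r\rangle/\langle t+r\rangle$ is bounded below by a universal positive constant (elementary case analysis), so the trivial Cartesian bound $|\Box u| \le |\partial^2 u| \lesssim |\partial\Gamma u|$---valid since $\partial_\alpha \in \Gamma$---already suffices. The real work is in the range $r \ge 1$.

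For $r \ge 1$ I switch to polar coordinates, in which
\[
\Box u = -\partial_-\partial_+ u + \tfrac{1}{r}\partial_r u + \tfrac{1}{r^2}\Omega^2 u, \qquad \partial_\pm := \partial_t \pm \partial_r.
\]
The central structural identity---the one that lets me avoid the unavailable scaling field $S$---is $(t+r)\partial_+ u = 2 L_r u + (t-r)\partial_- u$, where $L_r := t\partial_r + r\partial_t = \omega^a L_a$; it follows from $(t+r)\partial_t = L_r + t\partial_-$ and $(t+r)\partial_r = L_r - r\partial_-$. The companion facts $\partial_-(t+r) = 0$ and $\partial_-\omega^a = 0$ (both direct computations) let me $\partial_-$-differentiate the identity cleanly and obtain
\[
\partial_-\partial_+ u = \frac{2\,\omega^a \partial_- L_a u + 2\partial_- u + (t-r)\partial_-^2 u}{t+r},
\]
in which every numerator term is pointwise $\lesssim |\partial\Gamma u| + |\partial u|$. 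For the remaining two polar contributions I use the elementary bound $|\Omega v| \lesssim r|\partial v|$ to pass from $|\Omega^2 u|/r^2$ to $|\partial \Omega u|/r$.

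It remains to absorb the resulting coefficients $1/(t+r)$, $|t-r|/(t+r)$ and $1/r$ into the target weight $\langle t-r\rangle/\langle t+r\rangle$. The first two are immediate on $\{t+r \ge 1\}$; the last reduces to $\langle t+r\rangle \lesssim r\langle t-r\rangle$ for $r \ge 1$, which follows from a brief discriminant computation (the quadratic $9r^2\langle t-r\rangle^2 - \langle t+r\rangle^2$ in $t$ has negative discriminant, hence is positive for all $t$ when $r \ge 1$). Assembling everything yields the stated bound on $|\Box u|$, and the lemma follows. The main obstacle is discovering the identity $(t+r)\partial_+ u = 2 L_r u + (t-r)\partial_- u$: the bad outgoing null derivative must be rewritten purely in terms of the boost $L_r$ and the good derivative $\partial_-$, with a weight $t+r$ annihilated by $\partial_-$. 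Without this observation a naive vector-field rewriting of $\Box u$ produces $Su$-type terms, which are unavailable here because the Klein-Gordon mass term precludes commuting with the scaling field $S$.
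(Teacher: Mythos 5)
Your proof is correct and uses a genuinely different decomposition from the paper's. The paper stays in Cartesian coordinates and uses the operator identity
\begin{equation*}
-\Box = \frac{(t-r)(t+r)}{t^2}\partial_t^2 + \frac{x^a}{t^2}\partial_t L_a + \frac{2}{t}\partial_t - \frac{1}{t}\partial^a L_a - \frac{x^a}{t^2}\partial_a,
\end{equation*}
which is singular at $t=0$; accordingly the paper splits $t\le 1$ (trivial) vs.\ $t>1$, and then invokes $r\le 3t+3$ to upgrade the resulting weight $\langle t-r\rangle/t$ to $\langle t-r\rangle/\langle t+r\rangle$. You instead pass to polar coordinates, write $\Box u = -\partial_-\partial_+ u + \frac{1}{r}\partial_r u + \frac{1}{r^2}\Omega^2 u$, and make the estimate hinge on the null-frame identity $(t+r)\partial_+ = 2\omega^a L_a + (t-r)\partial_-$ together with $\partial_-(t+r)=\partial_-\omega^a=0$; your singularities sit at $r=0$, so you split $r\le 1$ vs.\ $r\ge 1$. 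Both arguments achieve the same thing --- rewriting the bad derivative purely through Lorentz boosts and good directions, deliberately avoiding the scaling field $S$ --- and your identity is essentially the null-coordinate analogue of the paper's $\partial_a=t^{-1}(L_a-x_a\partial_t)$. One mild bonus of your route is that the supporting bound $\langle t+r\rangle \lesssim r\langle t-r\rangle$ holds for all $r\ge 1$ and $t\ge 0$, so your argument actually proves the pointwise inequality without the restriction $r\le 3t+3$ (though the estimate only carries information in the interior). For completeness, the potential subtlety in the term $(t-r)\partial_-^2 u$ --- whether $\partial_r^2$ picks up a $1/r$ term from $\partial_a\omega^b$ --- is benign, since $\omega^a\partial_a\omega^b = 0$ identically, so $\partial_r^2 u = \omega^a\omega^b\partial_a\partial_b u$ and $|\partial_-^2 u|\lesssim|\partial^2 u|\lesssim|\partial\Gamma u|$ as you tacitly use.
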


\begin{proof}
	Case I. Let $t\in [0,1]$. From~\eqref{equ:KG}, we obtain
	\begin{equation}\label{est:KGdecay1}
	\left|u\right|\leq\left|\Box u\right|+\left|F\right|\lesssim \frac{\langle t-r\rangle}{\langle t+r\rangle}|\partial\Gamma u|+|F|,\quad \mbox{for}\ r\le 3t+3.
	\end{equation}
	
	Case II. Let $t\in (1,\infty)$.~Recall that,~we can rewrite the d'Alembert operator $-\Box$ as 
	\begin{equation*}
	-\Box=\frac{(t-r)(t+r)}{t^{2}}\pt \pt +\frac{x^{a}}{t^{2}}\pt L_{a}+\frac{2}{t}\pt -\frac{1}{t}\partial^{a}L_{a}-\frac{x^{a}}{t^{2}}\partial_{a}.
	\end{equation*}
	Hence, from the triangle inequality, we obtain 
	\begin{equation}\label{est:KGdecay2}
	|u|\lesssim |\Box u|+|F|\lesssim \frac{\langle t-r\rangle}{t}\left(|\partial \Gamma u|+|\partial u|\right)+|F|,\quad \mbox{for}\ r\le 3t+3.
	\end{equation}
	We see that~\eqref{est:decayKG} follows from~\eqref{est:KGdecay1} and~\eqref{est:KGdecay2}.
	
\end{proof}

Then, we introduce the standard energy and the conformal energy for the 2D wave equation,
\begin{equation*}
\begin{aligned}
\mathcal{E}_{0}(t,u)&=\int_{\R^{2}}\left((\pt u)^{2}+(\partial_{1} u)^{2}+(\partial_{2} u)^{2}\right)(t,x)\d x,\\
\mathcal{F}(t,u)&=\int_{\R^{2}}\bigg((Su+u)^{2}+\sum_{a=1,2}(L_{a}u)^{2}+(\Omega u)^{2}\bigg)\d x.
\end{aligned}
\end{equation*}
Now, we recall the following energy estimates for the 2D wave equation from~\cite{Alin}.
\begin{lemma}[\cite{Alin}]\label{le:l2conform}
		Suppose $u=u(t,x)$ is the solution to the Cauchy problem
	\begin{align}
	-\Box u=F\quad \mbox{with}\quad (u,\pt u)_{|t=0}=(u_{0},u_{1}),
	\end{align}
	where $F=F(t,x):\R^{1+2}\to \R$ is a sufficiently nice function, 
	then the following estimates are true.
	\begin{enumerate}
		\item {\rm{Standard energy estimate.}} It holds 
		\begin{equation}\label{est:Energywave}
		\mathcal{E}_{0}(t,u)^{\frac{1}{2}}\lesssim \mathcal{E}_{0}(0,u)^{\frac{1}{2}}+\int_{0}^{t}\|F(s,x)\|_{L_{x}^{2}}\d s.
		\end{equation}
		
		\item {\rm{Conformal energy estimate.}} It holds
	\begin{equation}\label{est:Con}
	\mathcal{F}(t,u)^{\frac{1}{2}}\lesssim \mathcal{F}(0,u)^{\frac{1}{2}}+\int_{0}^{t}\|\langle r+s\rangle F(s,x)\|_{L_{x}^{2}}\d s.
	\end{equation} 
\end{enumerate}
\end{lemma}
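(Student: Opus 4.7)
\textbf{Proof proposal for Lemma \ref{le:l2conform}.}

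Both estimates follow from classical multiplier identities, so the plan is to derive a differential identity in $t$ for each functional, then apply Cauchy--Schwarz and Gr\"onwall's inequality \eqref{est:Gron}. For part \emph{(i)}, I would multiply both sides of $-\Box u = F$ by $2\pt u$ and rewrite the left-hand side as
\begin{equation*}
2(-\Box u)\,\pt u = \pt\!\left((\pt u)^2 + (\partial_1 u)^2 + (\partial_2 u)^2\right) - 2\,\partial^{a}\!\left(\pt u\,\partial_a u\right).
\end{equation*}
Integrating over $[0,t]\times\R^2$, the spatial divergence disappears, so
$\frac{d}{dt}\mathcal{E}_0(t,u) = 2\int_{\R^2} F\,\pt u\,\d x \leq 2\|F\|_{L^2_x}\,\mathcal{E}_0(t,u)^{1/2}$,
which upon dividing by $\mathcal{E}_0^{1/2}$ and integrating in $t$ produces \eqref{est:Energywave}. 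This part is routine.

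For part \emph{(ii)}, the natural multiplier is the conformal Killing vector $K_0 = (t^2+r^2)\pt + 2t\,x^a\partial_a$, corrected by a zeroth-order term to match the definition of $\mathcal{F}$. Concretely, I would test the equation against $K_0 u + t u$ and compute
\begin{equation*}
2(-\Box u)(K_0 u + t u) = \pt\!\left[(Su+u)^2 + \textstyle\sum_{a=1,2}(L_a u)^2 + (\Omega u)^2\right] + \partial^a(\cdots),
\end{equation*}
where the spatial divergence collects the rest. This identity is the core algebraic step: it rests on the facts that $[\Box, L_a] = [\Box,\Omega]=0$ and $[\Box, S] = -2\Box$, together with the Euler-type identity $(t\pt + x^a\partial_a)^2 = (S)^2$ and the grouping $(Su)^2 + 2(Su)u + u^2 = (Su+u)^2$. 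After integrating over $\R^2$ the divergence term drops, leaving
\begin{equation*}
\frac{d}{dt}\mathcal{F}(t,u) \leq 2\int_{\R^2}|F|\,|K_0 u + tu|\,\d x \lesssim \|\langle t+r\rangle F\|_{L^2_x}\,\mathcal{F}(t,u)^{1/2},
\end{equation*}
using $|K_0 u + tu| \lesssim \langle t+r\rangle(|Su|+|L_a u|+|u|)$, which by Cauchy--Schwarz is bounded by $\langle t+r\rangle\mathcal{F}^{1/2}$. Dividing by $\mathcal{F}^{1/2}$ and integrating in $t$ yields \eqref{est:Con}.

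The main obstacle is the algebraic identity in the conformal estimate: one must verify that the time-derivative part of $2(-\Box u)(K_0 u + tu)$ is exactly $\pt\!\bigl[(Su+u)^2 + \sum_a (L_a u)^2 + (\Omega u)^2\bigr]$ with nonnegative spatial boundary contributions, so that no error terms spoil the Gr\"onwall step. This is a somewhat delicate but standard Morawetz-type computation (carried out in Alinhac \cite{Alin}); once it is in hand, both conclusions are immediate.
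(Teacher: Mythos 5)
Your proposal is correct and follows essentially the same route as the paper: multiply by $\pt u$ for the standard estimate, and by the conformal multiplier $tu + K_0 u$ for the conformal estimate, with $K_0 = (t^2+r^2)\pt + 2t x^a\partial_a = (t^2+r^2)\pt + 2rt\partial_r$, then integrate the resulting divergence identity and apply Cauchy--Schwarz together with the bound $|tu+K_0 u| \lesssim \langle t+r\rangle(|Su+u|+|L_a u|)$. Two small expositional points: your opening sentence says you will use Gr\"onwall's inequality, but what you then correctly do (dividing by the square root of the functional before integrating in time) is the simpler ``energy-comparison'' step, not Gr\"onwall; and the spatial divergence terms do not need to be nonnegative --- they simply integrate to zero over $\R^2$ under reasonable decay, which is what both you and the paper actually use.
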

\begin{proof}
	Proof of (i). The proof is similar to~\eqref{est:EnergyKG} and it is omitted.
	
	Proof of (ii). The proof relies on the use of nonspacelike multiplier  $K_{0}=(r^{2}+t^{2})\pt +2rt\partial_{r}$.
	In what follows, we provide a sketch of the proof and refer to~\cite[Theorem 6.11]{Alin} for more details. 
	
	Note that, a direct computation yields,
	\begin{equation*}
	\begin{aligned}
	-\Box u(tu+K_{0}u)
	&=\frac{1}{2}\pt \bigg((Su+u)^{2}+\sum_{a=1,2}(L_{a}u)^{2}+(\Omega u)^{2}\bigg)\\
	&-\partial^{a}\left(2rt(\partial_{a}u)(\partial_{r}u)+(r^{2}+t^{2})(\partial_{a}u)(\partial_{t}u)+\frac{1}{2}\pt(x_{a}u^{2})\right)\\
	&-\partial^{a}\left(tu\partial_{a}u-tx_{a}\left(|\partial u|^{2}-2(\pt u)^{2}\right)\right)=F(tu+K_{0}u).
	\end{aligned}
	\end{equation*}
	Integrating the above identity over the domain $[0,t]\times \R^{2}$ and then using the Cauchy-Schwarz inequality, we find
	\begin{equation*}
	\mathcal{F}(t,u)-\mathcal{F}(0,u)\lesssim \int_{0}^{t}\left\|\langle s+r\rangle F(s,x)\right\|_{L_{x}^{2}}\left\|\langle s+r\rangle^{-1}(su+K_{0}u)\right\|_{L_{x}^{2}}\d s.
	\end{equation*}
	Note also that, from the definition of $K_{0}$, we have 
	\begin{equation*}
	|tu+K_{0}u|=|t(Su+u)+r(t\partial_{r}u+r\partial_{t}u)|\lesssim \langle t+r\rangle \left(|Su+u|+|L_{1}u|+|L_{2}u|\right).
	\end{equation*}
	Gathering these estimates, we obtain~\eqref{est:Con}.
	\end{proof}

Last, we introduce the following $L_{x}^{2}$ estimate for the 2D wave equation.
\begin{lemma}\label{le:Lx2wave}
		Suppose $u=u(t,x)$ is the solution to the Cauchy problem
		\begin{equation*}
	-\Box u=F\quad \mbox{with}\quad (u,\pt u)_{|t=0}=(u_{0},u_{1}),
	\end{equation*}
	where $F=F(t,x):\R^{1+2}\to \R$ is a sufficiently nice function, 
	then we have 
	\begin{equation}\label{est:Lx2}
	\begin{aligned}
\|u(t,x)\|_{L_{x}^{2}}&\lesssim \|u_{0}\|_{L_{x}^{2}}+\log^{\frac{1}{2}}(2+t)\left(\|u_{1}\|_{L_{x}^{1}}+\|u_{1}\|_{L_{x}^{2}}\right)\\
&+ \log^{\frac{1}{2}}(2+t)\int_{0}^{t}\left(\left\|F(s,x)\right\|_{L_{x}^{1}}+\left\|F(s,x)\right\|_{L_{x}^{2}}\right)\d s.
\end{aligned}
	\end{equation} 
\end{lemma}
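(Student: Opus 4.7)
The plan is to use the Fourier representation
\[
\widehat{u}(t,\xi) = \cos(t|\xi|)\,\widehat{u_0}(\xi) + \frac{\sin(t|\xi|)}{|\xi|}\,\widehat{u_1}(\xi) + \int_0^t \frac{\sin((t-s)|\xi|)}{|\xi|}\,\widehat{F}(s,\xi)\,\d s,
\]
and estimate each piece in $L^2_\xi$ via Plancherel, paying attention to the low-frequency singularity of the kernel $\sin(t|\xi|)/|\xi|$ which in dimension $2$ produces the characteristic logarithmic loss.

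First, since $|\cos(t|\xi|)|\le 1$, Plancherel immediately gives the contribution $\|u_0\|_{L^2_x}$. For the $u_1$ and Duhamel terms, the key point is the elementary pointwise bound
\[
\left|\frac{\sin(\tau|\xi|)}{|\xi|}\right| \lesssim \min\!\left(\tau,\tfrac{1}{|\xi|}\right),\qquad \tau\ge 0,
\]
which I combine with a Littlewood--Paley split at $|\xi|=1$. On the high-frequency region $|\xi|\ge 1$, the multiplier is bounded, so Plancherel yields the contributions $\|u_1\|_{L^2_x}$ and $\|F(s,\cdot)\|_{L^2_x}$ (the latter integrated in $s$ by Minkowski). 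On the low-frequency region $|\xi|\le 1$, I use the Hausdorff--Young bound $\|\widehat{g}\|_{L^\infty_\xi}\lesssim \|g\|_{L^1_x}$ and compute in polar coordinates
\[
\int_{|\xi|\le 1}\min\!\left(\tau^2,\tfrac{1}{|\xi|^2}\right)\d\xi
= \int_{|\xi|\le 1/\tau}\tau^2\,\d\xi + \int_{1/\tau\le |\xi|\le 1}\frac{\d\xi}{|\xi|^2}
\lesssim 1+\log(2+\tau),
\]
which gives the low-frequency $L^2_\xi$ bound
\[
\left\|\mathbf{1}_{|\xi|\le 1}\frac{\sin(\tau|\xi|)}{|\xi|}\widehat{g}\right\|_{L^2_\xi}
\lesssim \log^{1/2}(2+\tau)\,\|g\|_{L^1_x}.
\]
Applying this with $\tau=t$, $g=u_1$ yields the $\log^{1/2}(2+t)\|u_1\|_{L^1_x}$ contribution.

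For the Duhamel term, Minkowski's inequality in $s$ together with the same dichotomy produces
\[
\int_0^t \log^{1/2}(2+(t-s))\,\|F(s,\cdot)\|_{L^1_x}\,\d s + \int_0^t \|F(s,\cdot)\|_{L^2_x}\,\d s,
\]
and the monotonicity $\log^{1/2}(2+(t-s))\le \log^{1/2}(2+t)$ for $s\in[0,t]$ absorbs the logarithm outside the integral. Summing all contributions yields exactly \eqref{est:Lx2}.

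The only mildly delicate step is the low-frequency computation producing the logarithm; no nonlinear structure, null form, or Lorentz boost is involved, so the argument is essentially a linear dispersive estimate in the flat Fourier picture.
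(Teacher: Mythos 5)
Your proof is correct and follows essentially the same route as the paper: Duhamel's formula in Fourier variables, Plancherel for the cosine piece and the high-frequency sine piece, and a low-frequency estimate via $\|\widehat{g}\|_{L^\infty_\xi}\lesssim \|g\|_{L^1_x}$ together with a polar-coordinate integral that produces the $\log^{1/2}(2+t)$ factor. The only cosmetic difference is that you bound $|\sin(\tau|\xi|)|/|\xi|$ by $\min(\tau,1/|\xi|)$ before integrating, whereas the paper substitutes $r=t\rho$ and uses $\int_0^t \frac{\sin^2 r}{r}\,\d r\lesssim\log(2+t)$; both yield the identical bound, and the treatment of the Duhamel term via Minkowski and $\log^{1/2}(2+(t-s))\le\log^{1/2}(2+t)$ is the same.
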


\begin{proof}
	Recall that, from the Duhamel's principle, we have the following expression of the solution $u$,
	\begin{equation}\label{equ:expressionwaveu}
	u(t)=\cos (t\sqrt{-\Delta})u_{0}+\frac{\sin (t\sqrt{-\Delta})}{\sqrt{-\Delta}}u_{1}+\int_{0}^{t}\frac{\sin ((t-s)\sqrt{-\Delta})}{\sqrt{-\Delta}}F(s)\d s.
	\end{equation}
	We claim that 
	\begin{equation}\label{est:cossin}
	\big\|\cos (t\sqrt{-\Delta})\big\|_{L_{x}^{2}\to L_{x}^{2}}+\log^{-\frac{1}{2}}(2+t)
	\left\|\frac{\sin (t\sqrt{-\Delta})}{\sqrt{-\Delta}}\right\|_{L_{x}^{1}\cap L_{x}^{2}\to L_{x}^{2}}\lesssim 1.
	\end{equation}
	Indeed, for any Schwartz function $f\in \mathcal{S}(\R^{2})$, from the polar coordinate transformation and the Plancherel theorem, we have 
	\begin{equation*}
	\big\|\cos (t\sqrt{-\Delta})f\big\|_{L_{x}^{2}}=\big\|\cos (t|\xi|)\hat{f}(\xi)\big\|_{L_{\xi}^{2}}\lesssim \big\|\hat{f}(\xi)\big\|_{L_{\xi}^{2}}\lesssim \|f\|_{L_{x}^{2}},\qquad \qquad \quad \qquad\qquad
	\end{equation*}
	\begin{equation*}
	\begin{aligned}
	\left\|\frac{\sin (t\sqrt{-\Delta})}{\sqrt{-\Delta}}f\right\|_{L_{x}^{2}}
	&\lesssim \left\|\frac{\sin (t|\xi|)}{|\xi|}\hat{f}(\xi)\textbf{1}_{\{|\xi|\le 1\}}\right\|_{L_{\xi}^{2}}+\left\|\frac{\sin (t|\xi|)}{|\xi|}\hat{f}(\xi)\textbf{1}_{\{|\xi|\ge 1\}}\right\|_{L_{\xi}^{2}}\\
	&\lesssim \|\hat{f}(\xi)\|_{L_{\xi}^{\infty}}\left(\int_{0}^{t}\frac{\sin^{2}r}{r}\d r\right)^{\frac{1}{2}}+\|\hat{f}(\xi)\|_{L_{\xi}^{2}}\left\|\frac{\sin (t|\xi|)}{|\xi|}\textbf{1}_{\{|\xi|\ge 1\}}\right\|_{L_{\xi}^{\infty}}\\
	&\lesssim \|f\|_{L_{x}^{2}}+\log^{\frac{1}{2}}(2+t)\|f\|_{L_{x}^{1}}\lesssim \log^{\frac{1}{2}}(2+t)\left(\|f\|_{L_{x}^{1}}+\|f\|_{L_{x}^{2}}\right).
	\end{aligned}
	\end{equation*}
	which means~\eqref{est:cossin}. Last, combining~\eqref{equ:expressionwaveu} and~\eqref{est:cossin}, we obtain~\eqref{est:Lx2}.
	\end{proof}

\subsection{Hidden structures within the Dirac--Klein-Gordon system}\label{Se:Hidden}
In this subsection, we introduce several hidden structures related to the massless spinor $\psi$, the scalar field $v$ and the nonlinear term $\psi^{*}\gamma^{0}\psi$. These hidden structures can be used to obtain several refined energy estimates from a bootstrap assumption.

Following~\cite[Lemma 3]{Boura00} or \cite[Section 5]{Boura}, for any solution $(\psi,v)$ to the 2D Dirac--Klein-Gordon system~\eqref{equ:DKG}, we denote $\Psi=\Psi(t,x)$ by the solution to the following equation, 
\begin{equation}\label{equ:defPsi}
\left\{\begin{aligned}
-\Box\Psi(t,x)&=i\gamma^{\mu}\partial_{\mu}\psi(t,x),\quad \mbox{for}\ (t,x)\in [0,\infty)\times \R^{2},\\
(\Psi,\pt \Psi)_{|t=0}&=(0,i\gamma^{0}\psi),\quad \quad \ \mbox{for}\ x\in \R^{2}.
\end{aligned}\right.
\end{equation}
Then $-i\gamma^{\mu}\partial_{\mu}\Psi=\psi$, since $-i\gamma^{\nu}\partial_{\nu}\left(i\gamma^{\mu}\partial_{\mu}\Psi+\psi\right)=0$ and $\left(i\gamma^{\mu}\partial_{\mu}\Psi+\psi\right)_{|t=0}=0$.

We introduce the following cubic terms that will be present in the hidden structures of the 2D Dirac--Klein-Gordon system,
\begin{equation}\label{equ:Ns00}
\begin{aligned}
\mathcal{N}_{1}(\psi,v)&=i\left(v\gamma^{\mu}\partial_{\mu}(v\psi)\right),\quad \mathcal{N}_{2}(\psi,\psi^{*})=\left(\left(\psi^{*}\gamma^{0}\psi\right)\psi\right),\\
\mathcal{N}_{3}(\psi,v)&=i\left(\partial_{\mu}(v\psi^{*})\gamma^{0}\gamma^{\mu}\psi\right)-i\left(\psi^{*}\gamma^{0}\gamma^{\mu}\partial_{\mu}(v\psi)\right).
\end{aligned}
\end{equation}
We also introduce the following quadratic term which enjoys a null structure,
\begin{equation}\label{equ:Ns01}
\mathcal{N}_{4}(\psi,\psi^{*})=2\eta^{\alpha\beta}\partial_{\alpha}\psi^{*}\gamma^{0}\partial_{\beta}\psi=2\partial_{\alpha}\psi^{*}\gamma^{0}\partial^{\alpha}\psi.
\end{equation}

First, we discuss the hidden structures within the 2D Dirac--Klein-Gordon system.
\begin{lemma}\label{lem:transfm}
	Suppose $(\psi,v)=(\psi,v)(t,x)$ is the solution to the Cauchy problem~\eqref{equ:DKG} with the initial data~\eqref{equ:initial} at $t=0$, then the following identities hold.
	\begin{enumerate}
		\item \emph{Nonlinear transformation of $\psi$.} Let $\widetilde{\psi}=\psi+i\gamma^{\mu}\partial_{\mu}(v\psi)$, then we have 
		\begin{equation}\label{equ:Hiddenpsi}
		-i\gamma^{\mu}\partial_{\mu}\widetilde{\psi}={\mathcal{N}}_{1}(\psi,v)+{\mathcal{N}}_{2}(\psi,\psi^{*})+2Q_{0}(\psi,v).
		\end{equation}
		
		\item \emph{Nonlinear transformation of $\Psi$.} Let $\widetilde{\Psi}=\Psi-v\psi$, then we have 
		\begin{align}\label{equ:HiddenPsi}
		-\Box \widetilde{\Psi}=-\mathcal{N}_{1}(\psi,v)-\mathcal{N}_{2}(\psi,\psi^{*})-2Q_{0}(\psi,v).
		\end{align}
	    \item \emph{Nonlinear transformation of $v$.} Let $\tilde{v}=v-\psi^{*}\gamma^{0}\psi$, then we have 
	    \begin{align}\label{equ:Hiddenv}
	    -\Box\tilde{v}+\tilde{v}=\mathcal{N}_{3}(\psi,v)+\mathcal{N}_{4}(\psi,\psi^{*}).
	    \end{align}
	    
	    \item \emph{Hidden structure of $\big[\widehat{\Gamma}^{I}\psi\big]_{-}$.} We have 
	    \begin{equation}\label{equ:Hiddenpsi-}
	    \big[\widehat{\Gamma}^{I}\psi\big]_{-}=-i\left(I_{2}-\omega_{a}\gamma^{0}\gamma^{a}\right)\gamma^{b}G_{b}\widehat{\Gamma}^{I}\Psi,\quad \mbox{for}\ I\in \mathbb{N}^{6}.
	    \end{equation}
	   
	\end{enumerate}
\end{lemma}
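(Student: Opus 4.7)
All four identities are really algebraic manipulations built from three ingredients: the system \eqref{equ:DKG}, the Clifford algebra \eqref{equ:gamma}, and the Leibniz rule $\Box(fg)=(\Box f)g+f(\Box g)+2\,\partial^{\alpha}f\,\partial_{\alpha}g$. The key identity that makes the first three parts work is
\[
\gamma^{\nu}\gamma^{\mu}\partial_{\nu}\partial_{\mu}=-\eta_{\mu\nu}\partial^{\mu}\partial^{\nu}=-\Box,
\]
obtained by symmetrizing and using $\{\gamma^{\mu},\gamma^{\nu}\}=-2\eta_{\mu\nu}I_{2}$. Applied once to $\psi$ via the Dirac equation this yields $\Box\psi=-i\gamma^{\mu}\partial_{\mu}(v\psi)$; applied to the conjugate equation (obtained by taking $*$ of \eqref{equ:DKG} and using $(\gamma^{\mu})^{*}\gamma^{0}=\gamma^{0}\gamma^{\mu}$) it yields $(\Box\psi^{*})\gamma^{0}=i\,\partial_{\mu}(v\psi^{*})\gamma^{0}\gamma^{\mu}$. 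These two formulas, together with the Klein--Gordon equation $\Box v=v-\psi^{*}\gamma^{0}\psi$, drive every computation below.

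\textbf{Plan for (i) and (ii).} For (i) I apply $-i\gamma^{\nu}\partial_{\nu}$ to $\widetilde{\psi}=\psi+i\gamma^{\mu}\partial_{\mu}(v\psi)$. The first piece gives $v\psi$ by the Dirac equation, and the second becomes $-\Box(v\psi)$ after using $\gamma^{\nu}\gamma^{\mu}\partial_{\nu}\partial_{\mu}=-\Box$. Expanding $-\Box(v\psi)$ by Leibniz produces three terms: the term $-(\Box v)\psi$ combines with $v\psi$ using the KG equation to give $\mathcal{N}_{2}(\psi,\psi^{*})$; the term $-v\Box\psi$ becomes $\mathcal{N}_{1}(\psi,v)$ via $\Box\psi=-i\gamma^{\nu}\partial_{\nu}(v\psi)$; and the Leibniz cross term is exactly $\pm 2Q_{0}(\psi,v)$ up to the sign convention used by the authors for $Q_{0}$. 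Part (ii) is the mirror image: applying $-\Box$ to $\widetilde{\Psi}=\Psi-v\psi$ produces $-\Box\Psi+\Box(v\psi)$, and one uses $-\Box\Psi=i\gamma^{\mu}\partial_{\mu}\psi=-v\psi$ (from \eqref{equ:defPsi} combined with the Dirac equation) to cancel the leading term; the Leibniz expansion of $\Box(v\psi)$ then gives exactly $-\mathcal{N}_{1}-\mathcal{N}_{2}-2Q_{0}(\psi,v)$.

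\textbf{Plan for (iii).} Compute
\[
-\Box\tilde v+\tilde v=-\Box v+v+\Box(\psi^{*}\gamma^{0}\psi)-\psi^{*}\gamma^{0}\psi=\Box(\psi^{*}\gamma^{0}\psi),
\]
where the cancellation uses the KG equation. Now apply the Leibniz rule for $\Box$ to $\psi^{*}\gamma^{0}\psi$; the cross term is $2\,\partial^{\alpha}\psi^{*}\gamma^{0}\partial_{\alpha}\psi=\mathcal{N}_{4}(\psi,\psi^{*})$ by definition. For the remaining term $(\Box\psi^{*})\gamma^{0}\psi+\psi^{*}\gamma^{0}\Box\psi$, I substitute $\Box\psi=-i\gamma^{\mu}\partial_{\mu}(v\psi)$ and $(\Box\psi^{*})\gamma^{0}=i\,\partial_{\mu}(v\psi^{*})\gamma^{0}\gamma^{\mu}$ (derived from the conjugate Dirac equation using $(\gamma^{\mu})^{*}\gamma^{0}=\gamma^{0}\gamma^{\mu}$, which in turn follows from $(\gamma^{0})^{*}=\gamma^{0}$, $(\gamma^{a})^{*}=-\gamma^{a}$ and $\gamma^{a}\gamma^{0}=-\gamma^{0}\gamma^{a}$). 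Matching the two contributions to the definition of $\mathcal{N}_{3}(\psi,v)$ in \eqref{equ:Ns00} completes the identity.

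\textbf{Plan for (iv), the main obstacle.} Because $\widehat{\Gamma}$ is designed to commute with the Dirac operator, $\widehat{\Gamma}^{I}\psi=-i\gamma^{\mu}\partial_{\mu}\widehat{\Gamma}^{I}\Psi$. It therefore suffices to prove the pointwise identity
\[
[\,-i\gamma^{\mu}\partial_{\mu}\Phi\,]_{-}=-i(I_{2}-\omega_{a}\gamma^{0}\gamma^{a})\gamma^{b}G_{b}\Phi
\]
for any smooth $\C^{2}$-valued $\Phi$, then specialize to $\Phi=\widehat{\Gamma}^{I}\Psi$. This will be the most computational step: I split $\gamma^{\mu}\partial_{\mu}=\gamma^{0}\partial_{t}+\gamma^{b}\partial_{b}$, expand $(I_{2}-\omega_{a}\gamma^{0}\gamma^{a})\gamma^{0}=\gamma^{0}+\omega_{a}\gamma^{a}$ via $\gamma^{a}\gamma^{0}=-\gamma^{0}\gamma^{a}$, and use the crucial Clifford identity
\[
\omega_{a}\omega_{b}\gamma^{a}\gamma^{b}=\tfrac{1}{2}\omega_{a}\omega_{b}\{\gamma^{a},\gamma^{b}\}=-\omega_{a}\omega_{a}=-1,
\]
to show that the $\partial_{t}$ coefficient produced by the good derivative $G_{b}=\partial_{b}+\omega_{b}\partial_{t}$ exactly reconstitutes the $\gamma^{0}\partial_{t}\Phi$ piece of $\gamma^{\mu}\partial_{\mu}\Phi$. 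The challenge is purely bookkeeping: several $\gamma$-matrix products must cancel in just the right way, and using the normalization $\omega_{1}^{2}+\omega_{2}^{2}=1$ is essential. Once the identity is established for $\Phi$, the commutation of $\widehat{\Gamma}^{I}$ with $-i\gamma^{\mu}\partial_{\mu}$ yields \eqref{equ:Hiddenpsi-} with no further work.
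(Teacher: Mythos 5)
Your proposal is correct and matches the paper's approach step for step: exploit $\Box=(i\gamma^{\mu}\partial_{\mu})^{2}$ together with the Leibniz rule for $\Box$ on products in (i)--(iii), and in (iv) split $\gamma^{b}\partial_{b}$ via the good derivatives $G_{b}$ and cancel the residual $\partial_{t}$ coefficient using $\omega_{a}\omega_{b}\gamma^{a}\gamma^{b}=-I_{2}$, just as the paper does. To pin down your ``$\pm 2Q_{0}$'': the paper's normalization is $Q_{0}(f,g)=\partial_{t}f\,\partial_{t}g-\nabla f\cdot\nabla g=-\eta^{\alpha\beta}\partial_{\alpha}f\,\partial_{\beta}g$ (readable from the $L_{a}$-expansion used to prove \eqref{est:Q0inside}), so the Leibniz cross term $2\eta^{\alpha\beta}\partial_{\alpha}v\,\partial_{\beta}\psi$ equals $-2Q_{0}(v,\psi)$ and the signs in \eqref{equ:Hiddenpsi} and \eqref{equ:HiddenPsi} come out exactly as stated; your direct Leibniz derivation of (ii) is fine, though the paper gets it as a one-liner from $-i\gamma^{\mu}\partial_{\mu}\widetilde{\Psi}=\widetilde{\psi}$ and \eqref{equ:Hiddenpsi}.
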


\begin{remark}
	The nonlinear transformations in Lemma \ref{lem:transfm} are in the spirit of normal forms of Shatah \cite{Shatah}; see also \cite{Tsutsumi, DongWyatt}. These transformations are favorable as they reveal a hidden null structure enjoyed by the nonlinearities, i.e., they transform the original nonlinearities (slowly decaying terms) to the sum of quadratic null terms and cubic terms (fast decaying terms).
\end{remark}

\begin{proof}[Proof of Lemma~\ref{lem:transfm}]
	Proof of (i). From~\eqref{equ:DKG} and $(i\gamma^{\mu}\partial_{\mu})^{2}=\Box$, we find 
	\begin{equation*}
	\begin{aligned}
	-i\gamma^{\mu}\partial_{\mu}\widetilde{\psi}
	&=-i\gamma^{\mu}\partial_{\mu}{\psi}-\Box(v\psi)\\
	&=(-\Box v+v)\psi-v((i\gamma^{\mu}\partial_{\mu})^{2}\psi)+2Q_{0}(\psi,v)\\
	&=(\psi^{*}\gamma^{0}\psi)\psi+i\left(v\gamma^{\mu}\partial_{\mu}(v\psi)\right)+2Q_{0}(\psi,v),
	\end{aligned}
	\end{equation*}
	which means~\eqref{equ:Hiddenpsi}.
	
	Proof of (ii). Combining~\eqref{equ:Hiddenpsi} and the definition of $\Psi$ in~\eqref{equ:defPsi}, we obtain~\eqref{equ:HiddenPsi}.
	
	Proof of (iii). Using again~\eqref{equ:DKG},~\eqref{equ:gamma} and $(i\gamma^{\mu}\partial_{\mu})^{2}=\Box$, we have 
	\begin{equation*}
	\begin{aligned}
	-\Box \tilde{v}+\tilde{v}
	&=(-\Box v+v)-\psi^{*}\gamma^{0}\psi+\Box(\psi^{*}\gamma^{0}\psi)\\
	&=i\left(\partial_{\mu}(v\psi^{*})\gamma^{0}\gamma^{\mu}\psi\right)-i\left(\psi^{*}\gamma^{0}\gamma^{\mu}\partial_{\mu}(v\psi)\right)+2\partial_{\alpha}\psi^{*}\gamma^{0}\partial^{\alpha}\psi,
	\end{aligned}
	\end{equation*}
	which means~\eqref{equ:Hiddenv}.
	
	Proof of (iv). From~\eqref{equ:gamma}~\eqref{def:phi-}, $-i\gamma^{\mu}\partial_{\mu}\widehat{{\Gamma}}^{I}\Psi=\widehat{{\Gamma}}^{I}\psi$ and $G_{a}=\partial_{a}+\omega_{a}\pt$, we have
	\begin{equation*}
	\begin{aligned}
	\big[\widehat{{\Gamma}}^{I}\psi\big]_{-}
	&=-i(\gamma^{0}+\omega_{a}\gamma^{a})\partial_{t}\widehat{{\Gamma}}^{I}\Psi
	-i\left(I_{2}-\omega_{a}\gamma^{0}\gamma^{a}\right)\gamma^{b}\partial_{b}\widehat{{\Gamma}}^{I}\Psi\\
	&=-i\left(\gamma^{0}+\omega_{a}\omega_{b}\gamma^{0}\gamma^{a}\gamma^{b}\right)\partial_{t}\widehat{{\Gamma}}^{I}\Psi-i\left(I_{2}-\omega_{a}\gamma^{0}\gamma^{a}\right)\gamma^{b}G_{b}\widehat{\Gamma}^{I}\Psi.
	\end{aligned}
	\end{equation*}
	Using again~\eqref{equ:gamma}, we have 
	\begin{equation*}
	\gamma^{0}+\omega_{a}\omega_{b}\gamma^{0}\gamma^{a}\gamma^{b}=(1-\omega_{1}^{2}-\omega_{2}^{2})\gamma^{0}+\sum_{0<a<b}\omega^{a}\omega^{b}\gamma^{0}(\gamma^{a}\gamma^{b}+\gamma^{b}\gamma^{a})=0.
	\end{equation*}
	Combining the above identities, we obtain~\eqref{equ:Hiddenpsi-}.
	\end{proof}

Second, we deduce a hidden structure for the nonlinear term $\psi^{*}\gamma^{0}\psi$; see for instance \cite{Boura} or the re-interpretation in \cite{DongWyatt} in the hyperboloidal foliation of the spacetime.
\begin{lemma}\label{lem:hidden}
	Let $\Phi_{1}$ and $\Phi_{2}$ be two $\C^{2}$-valued functions on $\R^{1+2}$, then we have 
	\begin{equation}\label{equ:HiddenPhi}
	\Phi_{1}^{*}\gamma^{0}\Phi_{2}=\frac{1}{4}\left([\Phi_{1}]_{-}^{*}\gamma^{0}[\Phi_{2}]_{-}+[\Phi_{1}]_{-}^{*}\gamma^{0}[\Phi_{2}]_{+}+[\Phi_{1}]_{+}^{*}\gamma^{0}[\Phi_{2}]_{-}\right).
	\end{equation}
\end{lemma}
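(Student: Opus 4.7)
The plan is a direct algebraic verification using the Clifford algebra identities in \eqref{equ:gamma}. From the definition \eqref{def:phi-} of the $[\,\cdot\,]_\pm$ decomposition, we have the reconstruction formula $\Phi_j = \tfrac{1}{2}\bigl([\Phi_j]_+ + [\Phi_j]_-\bigr)$ for $j=1,2$. Plugging this into $\Phi_1^* \gamma^0 \Phi_2$ and expanding yields
\begin{equation*}
4\,\Phi_1^*\gamma^0 \Phi_2 = [\Phi_1]_+^*\gamma^0[\Phi_2]_+ + [\Phi_1]_+^*\gamma^0[\Phi_2]_- + [\Phi_1]_-^*\gamma^0[\Phi_2]_+ + [\Phi_1]_-^*\gamma^0[\Phi_2]_-.
\end{equation*}
Thus the identity \eqref{equ:HiddenPhi} reduces to the single claim that the $(+,+)$ cross term vanishes:
\begin{equation*}
[\Phi_1]_+^*\gamma^0[\Phi_2]_+ = 0.
\end{equation*}

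To establish this, I first compute the conjugate transpose of $[\Phi_1]_+$. Using $(\gamma^0)^*=\gamma^0$ and $(\gamma^a)^*=-\gamma^a$ (from \eqref{equ:gamma}), one gets $(\gamma^0\gamma^a)^* = (\gamma^a)^*(\gamma^0)^* = -\gamma^a \gamma^0$, so
\begin{equation*}
[\Phi_1]_+^* = \Phi_1^* - \omega_a\, \Phi_1^* \gamma^a\gamma^0.
\end{equation*}
Then, using $(\gamma^0)^2 = I_2$ (which follows from $\{\gamma^0,\gamma^0\}=-2\eta_{00}I_2=2I_2$), I expand
\begin{equation*}
[\Phi_1]_+^*\gamma^0[\Phi_2]_+ = \Phi_1^*\gamma^0\Phi_2 + \omega_b\,\Phi_1^*\gamma^b\Phi_2 - \omega_a\,\Phi_1^*\gamma^a\Phi_2 - \omega_a\omega_b\,\Phi_1^*\gamma^a\gamma^0\gamma^b\Phi_2.
\end{equation*}
The two middle terms cancel as summation indices, so the computation reduces to showing the last term equals $-\Phi_1^*\gamma^0\Phi_2$.

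For the last term, the anti-commutation $\gamma^a\gamma^0 = -\gamma^0\gamma^a$ gives $\omega_a\omega_b\,\gamma^a\gamma^0\gamma^b = -\gamma^0\bigl(\omega_a\omega_b\gamma^a\gamma^b\bigr)$, and splitting the sum into diagonal and off-diagonal parts yields
\begin{equation*}
\omega_a\omega_b\gamma^a\gamma^b = \sum_{a}\omega_a^2(\gamma^a)^2 + \sum_{0<a<b}\omega_a\omega_b\{\gamma^a,\gamma^b\} = -(\omega_1^2+\omega_2^2) I_2 = -I_2,
\end{equation*}
since $(\gamma^a)^2 = -I_2$ for $a=1,2$, the anticommutators vanish for $a\neq b$, and $|\omega|=1$. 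Therefore $\omega_a\omega_b\,\gamma^a\gamma^0\gamma^b = \gamma^0$, which gives $[\Phi_1]_+^*\gamma^0[\Phi_2]_+ = \Phi_1^*\gamma^0\Phi_2 - \Phi_1^*\gamma^0\Phi_2 = 0$, completing the proof.

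This proof has no real obstacle; it is a bookkeeping exercise in the Clifford algebra. The only place one needs to be careful is tracking the signs coming from $(\gamma^a)^*=-\gamma^a$ versus $(\gamma^0)^*=\gamma^0$, and ensuring the reduction $\omega_a\omega_b\gamma^a\gamma^b = -I_2$ is correctly invoked — this is precisely the point where the geometry of the unit radial vector $\omega$ interacts with the Clifford relations to produce the hidden cancellation.
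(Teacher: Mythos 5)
Your proof is correct and follows the same route as the paper: decompose $2\Phi_j = [\Phi_j]_+ + [\Phi_j]_-$, expand, and show the $(+,+)$ cross term vanishes via the Clifford relations together with $\omega_1^2+\omega_2^2=1$. The only cosmetic difference is the way you organize the cancellation (isolating the $\omega_a\omega_b\gamma^a\gamma^b=-I_2$ identity, whereas the paper groups terms as $(1-\omega_1^2-\omega_2^2)$, anticommutator, and off-diagonal pieces), but the underlying algebra is identical.
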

\begin{proof}
	From the definition of $[\cdot]_{+}$ and $[\cdot]_{-}$ in~\eqref{def:phi-}, we see that 
	\begin{equation*}
	2\Phi_{1}=[\Phi_{1}]_{+}+[\Phi_{1}]_{-}\quad \mbox{and}\quad 
	2\Phi_{2}=[\Phi_{2}]_{+}+[\Phi_{2}]_{-}.
	\end{equation*}
	Hence, by an elementary computation, we find 
	\begin{equation*}
	\Phi_{1}^{*}\gamma^{0}\Phi_{2}=\frac{1}{4}\left([\Phi_{1}]_{-}^{*}\gamma^{0}[\Phi_{2}]_{-}+[\Phi_{1}]_{-}^{*}\gamma^{0}[\Phi_{2}]_{+}+[\Phi_{1}]_{+}^{*}\gamma^{0}[\Phi_{2}]_{-}+[\Phi_{1}]_{+}^{*}\gamma^{0}[\Phi_{2}]_{+}\right).
	\end{equation*}
	Note that, for the last term in the above identity, from~\eqref{equ:gamma}, we have 
	\begin{equation*}
	\begin{aligned}
	[\Phi_{1}]_{+}^{*}\gamma^{0}[\Phi_{2}]_{+}
	=&\left(\Phi_{1}^{*}+\omega_{a}\Phi_{1}^{*} \gamma^{0}\gamma^{a}\right)\gamma^{0}\left(\Phi_{2}+\omega_{b} \gamma^{0}\gamma^{b}\Phi_{2}\right)\\
	=&\left(1-\omega_{1}^{2}-\omega_{2}^{2}\right)\Phi_{1}^{*}\gamma^{0}\Phi_{2}+\omega_{a}\Phi_{1}^{*}\gamma^{0}\left(\gamma^{0}\gamma^{a}+\gamma^{a}\gamma^{0}\right)\Phi_{2}\\
	&+\omega_{1}\omega_{2}\Phi_{1}^{*}\gamma^{0}\left(\gamma^{1}\gamma^{2}+\gamma^{2}\gamma^{1}\right)\Phi_{2}=0.
	\end{aligned}
	\end{equation*}
	Gathering the above identities, we have proved~\eqref{equ:HiddenPhi}.
	\end{proof}

\section{Proof of Theorem~\ref{thm:main}}
In this section, we prove the existence of a solution $(\psi,v)$ of~\eqref{equ:DKG} satisfying~\eqref{est:thmpoint} and~\eqref{est:thmsca} in Theorem~\ref{thm:main}. The proof is based on a bootstrap argument of the high-order energy and pointwise decay for solution $(\psi,v)$.

\subsection{Bootstrap Setting}\label{Se:Boot}
Fix $0<\delta\ll 1$. To prove Theorem~\ref{thm:main}, we introduce the following bootstrap assumption of $(\psi,v)$: for $C>1$ and $0<\varepsilon\ll C^{-1}$ to be chosen later, 
\begin{equation}\label{est:Bootpsi}
\left\{\begin{aligned}
\sum_{|I|\le N}\int_{0}^{t}\langle s \rangle^{-3\delta}\int_{\R^{2}}\frac{\big[\widehat{\Gamma}^{I}\psi\big]_{-}^{2}}{\langle r-s\rangle^{\frac{6}{5}}}\d x \d s&\le C\varepsilon,\\
\sum_{|I|\le N}\langle t\rangle^{-\delta}\|\langle r-t\rangle\chi(r-2t)|\widehat{\Gamma}^{I}\psi|\|_{L_{x}^{2}}&\le C\varepsilon,\\
\sum_{|I|\le N-1}\mathcal{E}^{D}(t,\widehat{\Gamma}^{I}\psi)^{\frac{1}{2}}+\sum_{|I|\le N}\langle t\rangle^{-\delta}\mathcal{E}^{D}(t,\widehat{\Gamma}^{I}\psi)^{\frac{1}{2}}&\le C\varepsilon,\\
{\sum_{|I|\le N-7}\langle t+r\rangle^{\frac{3}{2}-\delta} \big|\big[\widehat{\Gamma}^{I}\psi\big]_{-}\big|}+\sum_{|I|\le N-7}\langle t+r\rangle^{\frac{1}{2}-\delta}\langle t-r\rangle\left|\widehat{\Gamma}^{I}\psi\right|&\le C\varepsilon,
\end{aligned}\right.
\end{equation}
\begin{equation}\label{est:Bootv}
\left\{\begin{aligned}
\sum_{|I|\le N}\int_{0}^{t}\langle s \rangle^{-5\delta}\int_{\R^{2}}\frac{\left(\Gamma^{I}v\right)^{2}}{\langle r-s\rangle^{\frac{6}{5}}}\d x \d s&\le C\varepsilon,\\
\sum_{|I|\le N-1}\mathcal{G}_{1}(t,\Gamma^{I}v)^{\frac{1}{2}}+\sum_{|I|\le N}\langle t\rangle^{-2\delta}\mathcal{G}_{1}(t,\Gamma^{I}v)^{\frac{1}{2}}&\le C\varepsilon,\\
\sum_{|I|\le N-6}\sup_{x\in \R^{2}}\langle r+t\rangle \left|\Gamma^{I}v(t,x)\right|+\sum_{|I|\le N}\|\langle r-t\rangle\chi(r-2t)\Gamma^{I}v\|_{L_{x}^{2}}&\le C\varepsilon.
\end{aligned}\right.
\end{equation}

For all initial data $(\psi_{0},\vec{v}_{0})$ satisfying~\eqref{est:Bootpsi} and~\eqref{est:Bootv}, we set
\begin{equation}\label{def:T}
T^{*}(\psi_{0},\vec{v}_{0})=\sup\left\{ t\in [0,\infty): (\psi,v)\ \mbox{satisfy}~\eqref{est:Bootpsi}\ \mbox{and}~\eqref{est:Bootv}\ \mbox{on}\ [0,t]\right\}.
\end{equation}

The heart of the proof of Theorem~\ref{thm:main} is the following proposition.

\begin{proposition}\label{pro:main}
For all initial data $(\psi_{0},\vec{v}_{0})$ satisfying the smallness condition~\eqref{est:smallness} in Theorem~\ref{thm:main}, we have $T^{*}(\psi_{0},\vec{v}_{0})=+\infty$.
\end{proposition}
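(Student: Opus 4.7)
The plan is a standard bootstrap-continuity argument: local well-posedness (from Bournaveas \cite{Boura}) yields $T^{*}(\psi_{0},\vec{v}_{0})>0$, and since every quantity in \eqref{est:Bootpsi}--\eqref{est:Bootv} depends continuously on $t$, if one can strictly improve each bound, say from $C\varepsilon$ to $\tfrac{1}{2}C\varepsilon$, whenever it holds on a subinterval, then the standard open/closed argument forces $T^{*}=+\infty$. So the task reduces to: assume \eqref{est:Bootpsi}--\eqref{est:Bootv} on $[0,T^{*})$ with a large universal constant $C$ and $0<\varepsilon\ll C^{-1}$, and re-derive every listed estimate with $C$ replaced by (say) $\tfrac{1}{2}C$, using only the smallness constant and $\varepsilon$, not $C\varepsilon$.

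First I would sharpen the pointwise decay. For the Klein-Gordon field, I apply Corollary \ref{coro:KGdecay} to the normal form $\tilde v = v - \psi^{*}\gamma^{0}\psi$ from Lemma \ref{lem:transfm}(iii), whose source $\mathcal{N}_{3}+\mathcal{N}_{4}$ is either cubic or the null form $\mathcal{N}_{4}$ controllable via \eqref{est:GaQ0fg}; together with the bootstrap energies this yields $|\Gamma^I v|\lesssim \varepsilon\langle t+r\rangle^{-1}$ for $|I|\le N-6$. For the Dirac field I introduce the auxiliary wave field $\Psi$ from \eqref{equ:defPsi}, apply the conformal and standard energy estimates of Lemma \ref{le:l2conform} using $|\psi|\lesssim |\partial\Psi|$ and the bootstrap bounds on $\psi$, and feed the result into the global Sobolev inequalities of Lemma \ref{le:GlobalSobolev} to obtain $|L_a\Psi|+|\Omega\Psi|+|S\Psi|\lesssim \varepsilon \langle t+r\rangle^{-1/2+\delta}$; inverting via \eqref{est:Gaparf} and $|\psi|\le|\partial\Psi|$ delivers \eqref{equ:achievable}. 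The refined bound on $[\widehat\Gamma^I\psi]_{-}$ in \eqref{est:Bootpsi} is improved in parallel through the identity \eqref{equ:Hiddenpsi-}, which expresses $[\,\cdot\,]_{-}$ in terms of the good derivatives $G_b \widehat\Gamma^I\Psi$ whose $L^2_x$ norms are controlled by $\mathcal G_1$-type quantities for $\Psi$.

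Second, the low-order energy bounds ($|I|\le N-1$) are closed by inserting the improved pointwise decay into the ghost-weight estimates \eqref{est:Energyphi1} and \eqref{est:EnergyKG}. For the Dirac equation the slowly-decaying quadratic interaction $v\psi$ is replaced by the source in \eqref{equ:Hiddenpsi}, which is either cubic or the null form $Q_0(\psi,v)$, whose $L^1_t L^2_x$ norm is bounded via \eqref{est:GaQ0fg}, the bootstrap $L^2$ energies, and the new pointwise bounds. For the Klein-Gordon equation, the same normal-form reduction to $\tilde v$ turns $\psi^{*}\gamma^{0}\psi$ into $\mathcal{N}_{3}+\mathcal{N}_{4}$; the null term is handled by \eqref{est:GaQ0fg} together with the hidden structure \eqref{equ:HiddenPhi}, and the cubic term is trivial once the pointwise bounds are in hand. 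The exterior bounds come directly from \eqref{est:cutenergyphi}--\eqref{est:cutenergyv}, where the $\log(2+|x|)$ weights in \eqref{est:smallness} are inherited from the Klein-Gordon $L^2$ bound \eqref{est:Lx2} applied to $\Psi$.

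The hardest step, and the main obstacle, is the top-order ($|I|=N$) Dirac energy, which is allowed a controlled growth $\langle t\rangle^\delta$. The dangerous contribution is $\psi\,\partial_\alpha v$ after commuting $\Gamma^I$ through $-i\gamma^\mu\partial_\mu\widehat\Gamma^I\psi = \widehat\Gamma^I(v\psi)$; as explained in \S\ref{subsec:key-idea} a naive Hölder bound via \eqref{est:Energyphi1} produces an uncontrolled $\langle t\rangle^\delta$ growth. The resolution is to use the bilinear form \eqref{est:Energyphi2} instead and to apply Lemma \ref{lem:hidden} to write $\psi^{*}\gamma^{0}\partial_\alpha\psi$ as a sum of terms in which at least one factor is a $[\,\cdot\,]_{-}$ component. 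These terms are then controlled by Cauchy--Schwarz against the weights $\langle r-s\rangle^{-3/5}$, consuming the modified ghost weights in \eqref{est:ModiGhostphi} and \eqref{est:ModighostKG} on one side and the pointwise bound \eqref{equ:achievable} on the other, producing no extra time growth. The symmetric top-order estimate for $v$ uses the Klein-Gordon ghost weight $\mathcal G_1$ for $\Gamma^I v$ together with \eqref{est:hatGG} and \eqref{est:hatGfPhi} to keep the Dirac-Dirac source in the favorable $[\,\cdot\,]_{-}$ form. Once both top-order estimates close with the prefactor $\langle t\rangle^{\delta}$, the remaining bounds in \eqref{est:Bootpsi}--\eqref{est:Bootv} follow by absorbing $C\varepsilon\cdot\varepsilon$ into $\tfrac12 C\varepsilon$, completing the bootstrap and yielding $T^{*}=+\infty$.
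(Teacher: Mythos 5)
Your proposal follows essentially the same route as the paper: normal forms $\tilde v$ and $\widetilde\psi$, the auxiliary wave field $\Psi$ with conformal/standard energy and global Sobolev inequalities, and the bilinear form \eqref{est:Energyphi2} together with Lemma~\ref{lem:hidden} to recover the top-order Dirac estimate. Two subtleties are glossed over and would matter in execution: the conformal energy estimate of Lemma~\ref{le:l2conform} cannot be applied to $\Psi$ directly (its source $-v\psi$ gives a $\langle s+r\rangle$-weighted $L^1_tL^2_x$ norm that grows linearly in $t$), so one must pass to $\widetilde\Psi=\Psi-v\psi$ with source $-\mathcal N_1-\mathcal N_2-2Q_0(\psi,v)$, and since $\mathcal N_2$ feeds $S\Gamma^I\Psi$, $\Gamma\Gamma^I\Psi$ back into the estimate through \eqref{equ:Hiddenpsi-}, a Gronwall loop on the combined conformal energy $\mathcal W(t,\widetilde\Psi)$ is required to cap the growth at $\langle t\rangle^{\delta}$; also, the $\log(2+|x|)$ weights in \eqref{est:smallness} serve Corollary~\ref{coro:KGdecay} for the sharp Klein-Gordon pointwise decay rather than the exterior energy, while the $L^2$ bound \eqref{est:Lx2} for $\widetilde\Psi$ uses the $\|\psi_0\|_{L^1}$ term of \eqref{est:smallness}.
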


The rest of this section is organized as follows. First, in \S\ref{Se:Pointwise}-\S\ref{Se:Nonlin}, we establish pointwise estimates and spacetime norm estimates for $(\psi,v)$ and nonlinear terms, respectively. Second, in~\S\ref{Se:End}, we devote to the proof of Proposition~\ref{pro:main} based on the vector field and energy methods. Last, in~\S\ref{Se:Endthm}, we prove Theorem~\ref{thm:main} from Proposition~\ref{pro:main}. In the sequel,  the implied constants in $\lesssim$ do not depend on the constant $C$ and $\varepsilon$ appearing in the bootstrap assumption~\eqref{est:Bootpsi}-\eqref{est:Bootv}.

\subsection{Pointwise estimates under the bootstrap assumption}\label{Se:Pointwise}

Note that 
from \eqref{est:hatGa}, \eqref{est:GloSob}, and the bootstrap assumption~\eqref{est:Bootpsi}-\eqref{est:Bootv}, the following global estimate for $(\psi,v)$ is true:
\begin{equation}\label{est:pointGlobal}
\sum_{|I|\le N-4}\big(|\widehat{{\Gamma}}^{I}\psi(t,x)|+|\Gamma^{I}v(t,x)|\big)\lesssim C\varepsilon\langle t+r\rangle^{-\frac{1}{2}}.
\end{equation}

Consider the following interior and exterior regions:
\begin{equation}
\begin{aligned}
\mathcal{C}_{\rm{int}}&=\left\{(t,x)\in [0,T^{+}(\psi_{0},\vec{v}_{0}))\times \R^{2}:r\le 3t+3\right\},\\
\mathcal{C}_{\rm{ext}}&=\left\{(t,x)\in [0,T^{+}(\psi_{0},\vec{v}_{0}))\times \R^{2}:r\ge 2t+3\right\}.
\end{aligned}
\end{equation}
For future reference, we state the following extra pointwise decay for $(\psi,v)$.
\begin{lemma}\label{le:point}
	For all $t\in [0,T^{*}(\psi_{0},\vec{v}_{0}))$, the following estimates hold.
	\begin{enumerate}
		\item {\rm Extra decay of $\psi$ in the interior region $\mathcal{C}_{\rm{int}}$.} We have
		\begin{align}
		&\sum_{|I|\le N-8}\left|\partial\widehat{\Gamma}^{I}\psi\right|{\bchar}_{\mathcal{C}_{\rm{int}}}\lesssim C\varepsilon\langle t-r\rangle^{-2}\langle t+r\rangle^{-\frac{1}{2}+\delta},\label{est:intpsilow}\\
		\sum_{|I|\le N-2}\big|\partial \widehat{\Gamma}^{I}&\psi\big|\bchar_{\mathcal{C}_{\rm{int}}}\lesssim \frac{1}{\langle t-r\rangle}\sum_{|I|\le N-1}\big|\widehat{{\Gamma}}^{I}\psi\big|+\frac{\langle t\rangle^{\frac{1}{2}+\delta}}{\langle t-r\rangle^{2}}\sum_{|I|\le N-2}\left|\Gamma^{I}v\right|\label{est:intpsiN-2},\\
		\sum_{|I|\le N-1}\big|\partial \widehat{\Gamma}^{I}&\psi\big|\bchar_{\mathcal{C}_{\rm{int}}}\lesssim \frac{1}{\langle t-r\rangle}\sum_{|I|\le N}\big|\widehat{{\Gamma}}^{I}\psi\big|+\frac{\langle t\rangle^{\frac{1}{2}+\delta}}{\langle t-r\rangle^{2}}\sum_{|I|\le N-1}\left|\Gamma^{I}v\right|.\label{est:intpsihigh}
		\end{align}
		\item {\rm Extra decay of $v$ in the interior region $\mathcal{C}_{\rm{int}}$.} We have 
		\begin{align}
		\sum_{|I|\le N-8}\left|\Gamma^{I}v\right| {\bchar}_{\mathcal{C}_{\rm{int}}}&\lesssim C\varepsilon \frac{\langle t-r\rangle}{\langle t+r\rangle^{2}} + C\varepsilon\langle t+r\rangle^{-2+\delta},\label{est:intvlow}\\
		\sum_{|I|\le N-1}| \Gamma^{I}v|\bchar_{\mathcal{C}_{\rm{int}}}&\lesssim \sum_{|I|\le N}\frac{\langle t-r\rangle}{\langle t+r\rangle}\left|\partial \Gamma^{I}v\right|\nonumber\\
		&+\sum_{|I|\le N-1}\left(\langle t\rangle^{-\frac{3}{2}+\delta}\big|\widehat{{\Gamma}}^{I}\psi\big|+\langle t\rangle^{-\frac{1}{2}}\big|\big[\widehat{{\Gamma}}^{I}\psi\big]_{-}\big|\right)\label{est:intvhigh}.
		\end{align}

		\item {\rm Extra decay of $(\psi,v)$ in the exterior region $\mathcal{C}_{\rm{ext}}$.} We have 
		\begin{equation}\label{est:outpsiv}
		\sum_{|I|\le N-2}\left(\left|\Gamma^{I}\psi\right|+\left|\Gamma^{I}v\right|\right)\bchar_{\mathcal{C}_{\rm{ext}}}\lesssim C\varepsilon\langle t+r\rangle^{-\frac{3}{2}+\delta}.
		\end{equation}
	\end{enumerate}
	
\end{lemma}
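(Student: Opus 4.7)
All three groups of estimates follow the same template: combine a deterministic pointwise decay identity with the bootstrap assumptions \eqref{est:Bootpsi}--\eqref{est:Bootv}, using a low--high multi-index split whenever both fields appear as factors.

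For the interior bounds on $\psi$ the plan is to apply Lemma~\ref{le:Diracdecay} to $\widehat{\Gamma}^I\psi$; since the Dirac operator commutes with $\widehat{\Gamma}$, this field satisfies $-i\gamma^\mu\partial_\mu\widehat{\Gamma}^I\psi=\widehat{\Gamma}^I(v\psi)$, and the lemma gives on $\mathcal{C}_{\rm int}$
\[
|\partial\widehat{\Gamma}^I\psi|\lesssim\langle t-r\rangle^{-1}\!\!\sum_{|J|\le|I|+1}\!\!|\widehat{\Gamma}^J\psi|+\frac{t}{\langle t-r\rangle}\bigl|\widehat{\Gamma}^I(v\psi)\bigr|.
\]
A Leibniz expansion $\widehat{\Gamma}^I(v\psi)=\sum_{I_1+I_2=I}\Gamma^{I_1}v\cdot\widehat{\Gamma}^{I_2}\psi$, together with $N\ge 14$, lets me place one of the factors at index $\le N-7$ and invoke the pointwise bootstraps $|\widehat{\Gamma}^{I_2}\psi|\lesssim C\varepsilon\langle t+r\rangle^{-1/2+\delta}\langle t-r\rangle^{-1}$ or $|\Gamma^{I_1}v|\lesssim C\varepsilon\langle t+r\rangle^{-1}$. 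The low-$v$ split produces the first term on the right of \eqref{est:intpsiN-2}/\eqref{est:intpsihigh} (after noting $\frac{t}{\langle t-r\rangle}\cdot C\varepsilon\langle t+r\rangle^{-1}\lesssim\langle t-r\rangle^{-1}$), while the low-$\psi$ split produces the $\langle t\rangle^{1/2+\delta}\langle t-r\rangle^{-2}|\Gamma^{I_1}v|$ term. When $|I|\le N-8$ both factors can be bounded pointwise, directly giving \eqref{est:intpsilow}.

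For the interior bounds on $v$ I would apply \eqref{est:decayKG} to $\Gamma^I v$, which solves $-\Box\Gamma^I v+\Gamma^I v=\Gamma^I(\psi^*\gamma^0\psi)$. This furnishes the linear piece $\frac{\langle t-r\rangle}{\langle t+r\rangle}\sum_{|J|\le|I|+1}|\partial\Gamma^J v|$ plus the quadratic source. For the source I invoke the hidden structure of Lemma~\ref{lem:hidden}, which always exhibits at least one $[\cdot]_-$ factor, combined with \eqref{est:hatGG}, to obtain
\[
\bigl|\Gamma^I(\psi^*\gamma^0\psi)\bigr|\lesssim\!\!\sum_{|I_1|+|I_2|\le|I|}\!\!\bigl|\widehat{\Gamma}^{I_1}\psi\bigr|\,\bigl|[\widehat{\Gamma}^{I_2}\psi]_-\bigr|.
\]
Placing the $[\cdot]_-$ on the high-index side and using the Sobolev-level bound \eqref{est:pointGlobal} for the low-index factor yields the $\langle t\rangle^{-1/2}\bigl|[\widehat{\Gamma}^{I_2}\psi]_-\bigr|$ term in \eqref{est:intvhigh}; the $\langle t\rangle^{-3/2+\delta}|\widehat{\Gamma}^I\psi|$ slot covers the alternative split in which the $[\cdot]_-$ factor is low-indexed and one uses the refined bootstrap $|[\widehat{\Gamma}^{I_1}\psi]_-|\lesssim C\varepsilon\langle t+r\rangle^{-3/2+\delta}$. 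For $|I|\le N-8$, combining both bootstraps on the quadratic source gives the $C\varepsilon\langle t+r\rangle^{-2+\delta}$ piece of \eqref{est:intvlow}, while the $\frac{\langle t-r\rangle}{\langle t+r\rangle^2}$ piece comes from the linear term via the pointwise bootstrap on $\partial\Gamma^J v$.

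For the exterior bound \eqref{est:outpsiv}, on $\mathcal{C}_{\rm ext}$ one has $\chi(r-2t)\equiv 1$ and $\langle r-t\rangle\sim\langle r\rangle\sim\langle t+r\rangle$. I would apply the global Sobolev inequality \eqref{est:GloSobr} to $\langle r-t\rangle\chi(r-2t)\Gamma^I\psi$ and $\langle r-t\rangle\chi(r-2t)\Gamma^I v$ for $|I|\le N-2$; this gains $\langle r\rangle^{-1/2}$, and dividing by $\langle r-t\rangle\sim\langle t+r\rangle$ together with the weighted $L^2$ bootstraps at order $N$ delivers the claimed $\langle t+r\rangle^{-3/2+\delta}$ decay. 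The only subtlety is that $\partial_r$ hitting $\chi(r-2t)$ produces $\chi'(r-2t)$ supported in $r-2t\in[1,2]$, on which $\langle r-t\rangle\sim\langle t\rangle$, so the commutator term is reabsorbed into the same bootstrap norms. The main obstacle of the whole lemma is the algebraic step of exposing a $[\cdot]_-$ factor inside $\Gamma^I(\psi^*\gamma^0\psi)$ via Lemma~\ref{lem:hidden}: without this hidden structure the source is only $\langle t-r\rangle^{-1}$-sharp, which would be insufficient for the $\langle t+r\rangle^{-2+\delta}$ target in \eqref{est:intvlow}.
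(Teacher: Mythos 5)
Your plan for parts (i) and (ii) matches the paper's proof in all essential respects: Lemma~\ref{le:Diracdecay} together with a low--high multi-index split of $\widehat{\Gamma}^I(v\psi)$ for the interior $\psi$ bounds, and \eqref{est:decayKG} together with the hidden $[\cdot]_-$ structure (Lemma~\ref{lem:hidden} combined with \eqref{est:hatGG}) for the interior $v$ bounds.

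For part (iii), however, the claim that the $\chi'(r-2t)$ commutator term ``is reabsorbed into the same bootstrap norms'' is incorrect as stated, and this is the one place the argument fails. The bootstrap controls $\|\langle r-t\rangle\chi(r-2t)\widehat{\Gamma}^I\psi\|_{L_x^2}$, but $\chi'(r-2t)$ is supported on $r-2t\in[1,2]$, where $\chi(r-2t)$ is not bounded below (it vanishes at $r-2t=1$). Hence $\|\chi'(r-2t)\langle r-t\rangle\widehat{\Gamma}^I\psi\|_{L_x^2}$ is not dominated by the weighted bootstrap norm, and falling back on the unweighted energy bootstrap costs an extra $\langle r-t\rangle\sim\langle t\rangle$ on that shell, which ruins the claimed $\langle t+r\rangle^{-3/2+\delta}$ rate. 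The paper resolves this by applying \eqref{est:GloSobr} to the \emph{shifted} quantity $\langle r-t\rangle\chi(r-2t-1)\widehat{\Gamma}^I\psi$ rather than to $\langle r-t\rangle\chi(r-2t)\widehat{\Gamma}^I\psi$: then $\chi'(r-2t-1)$ is supported on $r-2t\in[2,3]$, where $\chi(r-2t)\equiv 1$, so the commutator term \emph{is} controlled by the weighted bootstrap norm, while $\chi(r-2t-1)\equiv 1$ on $\mathcal{C}_{\rm ext}$ so the Sobolev output still controls $\widehat{\Gamma}^I\psi$ (and $\Gamma^I v$) there. With this shift, the rest of your (iii) argument --- dividing by $\langle r-t\rangle\sim\langle t+r\rangle$ and using the order-$N$ exterior bootstrap --- goes through.
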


\begin{proof}
	Proof of (i). First, from~\eqref{equ:DKG},~\eqref{est:hatGa},~\eqref{est:decaypphi},~\eqref{est:Bootpsi} and~\eqref{est:Bootv}, we have
	\begin{equation*}
	\begin{aligned}
	\sum_{|I|\le N-8}\big|\partial \widehat{{\Gamma}}^{I}\psi\big|{\bchar}_{\mathcal{C}_{\rm{int}}}
	&\lesssim \frac{1}{\langle t-r\rangle}\sum_{|I|\le N-8}\big(\big|\widehat{{\Gamma}}\widehat{{\Gamma}}^{I}\psi\big|+\big|\widehat{{\Gamma}}^{I}\psi\big|\big){\bchar}_{\mathcal{C}_{\rm{int}}}\\
	&+\sum_{\substack{|I_{1}|\le N-8\\ |I_{2}|\le N-8}}\frac{t}{\langle t-r\rangle}\left|\Gamma^{I_{1}}v\right|\big|\widehat{{\Gamma}}^{I_{2}}\psi\big|{\bchar}_{\mathcal{C}_{\rm{int}}}\lesssim C\varepsilon \langle t+r\rangle^{-\frac{1}{2}+\delta}\langle t-r\rangle^{-2},
	\end{aligned}
	\end{equation*}
	which implies~\eqref{est:intpsilow}.
	
	Second, using again~\eqref{equ:DKG},~\eqref{est:hatGa},~\eqref{est:decaypphi},~\eqref{est:Bootpsi},~\eqref{est:Bootv} and $N\ge 14$, we have
	\begin{equation*}
	\begin{aligned}
	\sum_{|I|\le N-2}\big|\partial \widehat{{\Gamma}}^{I}\psi\big|{\bchar}_{\mathcal{C}_{\rm{int}}}
	&\lesssim \frac{1}{\langle t-r\rangle}\sum_{|I|\le N-2}\big(\big|\widehat{{\Gamma}}\widehat{{\Gamma}}^{I}\psi\big|+\big|\widehat{{\Gamma}}^{I}\psi\big|\big)\\
	&+\sum_{\substack{|I_{1}|\le N-2\\ |I_{2}|\le N-5}}\frac{t}{\langle t-r\rangle}\left|\Gamma^{I_{1}}v\right|\big|\widehat{{\Gamma}}^{I_{2}}\psi\big|+\sum_{\substack{|I_{1}|\le N-6\\ |I_{2}|\le N-2}}\frac{t}{\langle t-r\rangle}\left|\Gamma^{I_{1}}v\right|\big|\widehat{{\Gamma}}^{I_{2}}\psi\big|\\
	&\lesssim \frac{1}{\langle t-r\rangle}\sum_{|I|\le N-1}\big|\widehat{{\Gamma}}^{I}\psi\big|+\frac{\langle t\rangle^{\frac{1}{2}+\delta}}{\langle t-r\rangle^{2}}\sum_{|I|\le N-2}\left|\Gamma^{I}v\right|,
	\end{aligned}
	\end{equation*}
	which means~\eqref{est:intpsiN-2}. The proof of~\eqref{est:intpsihigh} is similar to~\eqref{est:intpsiN-2}, and we omit it.
	
	Proof of (ii). First, from~\eqref{equ:DKG},~\eqref{est:hatGG},~\eqref{est:decayKG},~\eqref{est:Bootpsi},~\eqref{est:Bootv} and~\eqref{est:pointGlobal}, we have 
	\begin{equation*}
	\begin{aligned}
	\sum_{|I|\le N-8}\left|\Gamma^{I}v\right|{\bchar}_{\mathcal{C}_{\rm{int}}}
	&\lesssim \frac{\langle t-r\rangle}{\langle t+r\rangle}\sum_{|I|\le N-8}\left(\left|\partial \Gamma\Gamma^{I}v\right|+\left|\partial \Gamma^{I}v\right|\right)\\
	&+\sum_{\substack{|I_{1}|\le N-8\\|I_{2}|\le N-8}}\big|\big[\widehat{{\Gamma}}^{I_{1}}\psi\big]_{-}\big|\big|\widehat{{\Gamma}}^{I_{2}}\psi\big|
	\lesssim C\varepsilon \frac{\langle t-r\rangle}{\langle t+r\rangle^{2}} + C\varepsilon\langle t+r\rangle^{-2+\delta},
	\end{aligned}
	\end{equation*}
	which means~\eqref{est:intvlow}.
	
	Second, using again~\eqref{equ:DKG},~\eqref{est:hatGG},~\eqref{est:decayKG},~\eqref{est:Bootpsi},~\eqref{est:Bootv},~\eqref{est:pointGlobal} and $N\ge 14$, we have 
	\begin{equation*}
	\begin{aligned}
	\sum_{|I|\le N-1}\left|\Gamma^{I}v\right|{\bchar}_{\mathcal{C}_{\rm{int}}}
	&\lesssim \frac{\langle t-r\rangle}{\langle t+r\rangle}\sum_{|I|\le N-1}\left(\left|\partial \Gamma^{I}\Gamma v\right|+\left|\partial \Gamma^{I}v\right|\right)\\
	&+\sum_{\substack{|I_{1}|\le N-7\\ |I_{2}|\le N-1}}\big|\big[\widehat{{\Gamma}}^{I_{1}}\psi\big]_{-}\big|\big|\widehat{{\Gamma}}^{I_{2}}\psi\big|
	+\sum_{\substack{|I_{1}|\le N-1\\ |I_{2}|\le N-5}}\big|\big[\widehat{{\Gamma}}^{I_{1}}\psi\big]_{-}\big|\big|\widehat{{\Gamma}}^{I_{2}}\psi\big|\\
	&\lesssim \frac{\langle t-r\rangle}{\langle t+r\rangle}\sum_{|I|\le N}\left|\partial \Gamma^{I}v\right|
	+\sum_{|I|\le N-1}\big(\langle t\rangle^{-\frac{3}{2}+\delta}\big|\widehat{{\Gamma}}^{I}\psi\big|+\langle t\rangle^{-\frac{1}{2}}\big|\big[\widehat{{\Gamma}}^{I}\psi\big]_{-}\big|\big),
	\end{aligned}
	\end{equation*}
	which means~\eqref{est:intvhigh}.
	
	Proof of (iii). Note that, from the definition of $\chi$ in~\eqref{def:chi}, we have 
	\begin{equation*}
	\begin{aligned}
	\sum_{|I|\le N-2}\big|\langle r-t\rangle\chi(r-2t-1)\widehat{{\Gamma}}^{I}\psi\big|&\lesssim \sum_{|I|\le N-2}\big|\langle r-t\rangle\chi(r-2t)\widehat{{\Gamma}}^{I}\psi\big|,\\
	\sum_{|I|\le N-2}\big|\Omega\big(\langle r-t\rangle\chi(r-2t-1)\widehat{{\Gamma}}^{I}\psi\big)\big|&\lesssim \sum_{|I|\le N-1}\big|\langle r-t\rangle\chi(r-2t)\widehat{{\Gamma}}^{I}\psi\big|,\\
	\sum_{|I|\le N-2}\big|\partial_{r}\big(\langle r-t\rangle\chi(r-2t-1)\widehat{{\Gamma}}^{I}\psi\big)\big|&\lesssim \sum_{|I|\le N-1}\big|\langle r-t\rangle\chi(r-2t)\widehat{{\Gamma}}^{I}\psi\big|,\\
	\sum_{|I|\le N-2}\big|\partial_{r}\big(\Omega\big(\langle r-t\rangle\chi(r-2t-1)\widehat{{\Gamma}}^{I}\psi\big)\big)\big|&\lesssim \sum_{|I|\le N}\big|\langle r-t\rangle\chi(r-2t)\widehat{{\Gamma}}^{I}\psi\big|.
	\end{aligned}
	\end{equation*}
	Combining the above estimates with~\eqref{est:Bootpsi}, we have 
	\begin{equation*}
	\begin{aligned}
	&\sum_{\substack{|I|\le 2\\ (i_{1},i_{2})\ne (2,0)}}\sum_{|I_{1}|\le N-2}\big\|\Lambda^{I}\big(\langle r-t\rangle\chi(r-2t-1)\widehat{{\Gamma}}^{I_{1}}\psi\big)\big\|_{L_{x}^{2}}\\
	&\lesssim \sum_{|I|\le N}\big\|\langle r-t\rangle\chi(r-2t)\widehat{{\Gamma}}^{I}\psi\big\|_{L_{x}^{2}}\lesssim C\varepsilon \langle t\rangle^{\delta}.
	\end{aligned}
	\end{equation*}
	Based on~\eqref{est:GloSobr} and the above estimate, we obtain~\eqref{est:outpsiv} for $\psi$. The proof of~\eqref{est:outpsiv} for $v$ follows from a similar argument, and it is omitted.
\end{proof}

\subsection{Nonlinear estimates under the bootstrap assumption}\label{Se:Nonlin}
In this subsection, we present several refined estimates of nonlinear terms under the bootstrap assumption~\eqref{est:Bootpsi}-\eqref{est:Bootv}. Note that, these estimates combined with the energy and conformal energy estimates that have been established in \S\ref{Se:Dirac}-\ref{Se:Wave} will be used to close the bootstrap for $(\psi,v)$ (see more details in \S\ref{Se:End}).

Note that, from the bootstrap assumption~~\eqref{est:Bootpsi}-\eqref{est:Bootv}, we have 
\begin{equation}\label{est:Ghostvweight}
\begin{aligned}
&\sum_{|I|\le N-1}\int_{0}^{t}\langle s\rangle^{-\frac{1}{2}-\frac{1}{2}\delta}\left(\left\|\frac{\Gamma^{I}v}{\langle s-r\rangle^{3\over 5}}\right\|_{L_{x}^{2}}+\left\|\frac{[\widehat{\Gamma}^{I}\psi]_{-}}{\langle s-r\rangle^{3\over 5}}\right\|_{L_{x}^{2}}\right)\d s\\
&\lesssim \sum_{|I|\le N-1}\left(\int_{0}^{t}\langle s\rangle^{-1-\delta}\d s \right)^{\frac{1}{2}}\left(\int_{0}^{t}\left\|\frac{\Gamma^{I}v}{\langle s-r\rangle^{3\over 5}}\right\|^{2}_{L_{x}^{2}}+\left\|\frac{[\widehat{\Gamma}^{I}\psi]_{-}}{\langle s-r\rangle^{3\over 5}}\right\|_{L_{x}^{2}}^{2}\d s\right)^{\frac{1}{2}}\lesssim C\varepsilon.
\end{aligned}
\end{equation}

We start with the $L_{t}^{1}L_{x}^{2}$ and weighted $L_{t}^{1}L_{x}^{2}$ estimates for ${\Gamma}^{I}Q_{0}(\psi,v)$.
\begin{lemma}\label{le:estQ0}
	For all $t\in [0,T^{*}(\psi_{0},\vec{v}_{0}))$, the following estimates hold.
	\begin{enumerate}
		
		\item {\rm{$L_{t}^{1}L_{x}^{2}$ estimate on ${\Gamma}^{I}Q_{0}$}}. It holds
		\begin{equation}\label{est:LtLxQ0}
		\sum_{|I|\le N-1}\int_{0}^{t}\|{\Gamma}^{I}Q_{0}(\psi,v)\|_{L_{x}^{2}}\d s \lesssim C^{2}\varepsilon^{2}.
		\end{equation}
		
		\item {\rm{Weighted $L_{t}^{1}L_{x}^{2}$ estimate on ${\Gamma}^{I}Q_{0}$}}. It holds  
		\begin{equation}\label{est:weightLtLxQ0}
		\sum_{|I|\le N-4}\int_{0}^{t}\left\|\langle s+r\rangle {\Gamma}^{I}Q_{0}(\psi,v)\right\|_{L_{x}^{2}}\d s\lesssim C^{2}\varepsilon^{2}\log (2+t).
		\end{equation}
		
	\end{enumerate}
\end{lemma}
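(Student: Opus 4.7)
The plan is to expand $\Gamma^{I}Q_{0}(\psi,v)=\sum_{I_{1}+I_{2}=I}Q_{0}(\Gamma^{I_{1}}\psi,\Gamma^{I_{2}}v)$ via the Leibniz-like identity~\eqref{est:GamQ0} and to exploit a low--high splitting: because $N\ge 14$, in every summand the smaller of the two indices satisfies $\min(|I_{1}|,|I_{2}|)\le\lfloor|I|/2\rfloor\le N-8$ for part~(i) (resp.\ $\le N-9$ for part~(ii)), which is precisely the range in which the pointwise estimates of Lemma~\ref{le:point} and the last lines of the bootstrap assumption~\eqref{est:Bootpsi}--\eqref{est:Bootv} are available on the low-order factor. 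The high-order factor, carrying up to $N-1$ (resp.\ $N-4$) derivatives, is controlled only in $L^{2}_{x}$ via the bootstrap ghost-weighted energy.

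Next, I would split the $x$-integration according to whether $(s,x)\in\cint$ or $(s,x)\in\cext$. On $\cint$ I would use the sharp null-form decomposition~\eqref{est:Q0inside},
\begin{equation*}
|Q_{0}(\Gamma^{I_{1}}\psi,\Gamma^{I_{2}}v)|\bchar_{\cint}\lesssim\frac{\langle s-r\rangle}{\langle s+r\rangle}|\partial\Gamma^{I_{1}}\psi||\partial\Gamma^{I_{2}}v|+\langle s\rangle^{-1}|\Gamma\Gamma^{I_{1}}\psi||\Gamma\Gamma^{I_{2}}v|,
\end{equation*}
combine it with the $\langle s-r\rangle^{-2}\langle s+r\rangle^{-1/2+\delta}$ decay of $\partial\widehat{\Gamma}^{I_{1}}\psi$ from~\eqref{est:intpsilow} on the low-order Dirac factor (or the $\langle s+r\rangle^{-1}$ decay of $\Gamma^{I_{2}}v$ from~\eqref{est:Bootv} when it is $v$ that is low), and pair via Hölder's $L^{\infty}_{x}\cdot L^{2}_{x}$ inequality with the bootstrap energy on the high-order factor. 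The $\langle s-r\rangle/\langle s+r\rangle$ prefactor combined with~\eqref{est:intpsilow} produces a supremum of order $\langle s\rangle^{-3/2+\delta}$, yielding for part~(i) an integrand of size $C^{2}\varepsilon^{2}\langle s\rangle^{-3/2+O(\delta)}$, which integrates to $O(C^{2}\varepsilon^{2})$. On $\cext$, where $\langle s-r\rangle\sim\langle s+r\rangle$ and no null gain is present, I would instead pair the strong exterior pointwise decay~\eqref{est:outpsiv}, $|\Gamma^{I}\psi|+|\Gamma^{I}v|\lesssim C\varepsilon\langle s+r\rangle^{-3/2+\delta}$ for $|I|\le N-2$, with the weighted cut-off $L^{2}_{x}$ bounds from the last lines of~\eqref{est:Bootpsi}--\eqref{est:Bootv} and from~\eqref{est:cutenergyphi}--\eqref{est:cutenergyv} (applied to the derivatives of $v$), again obtaining an integrable-in-time product.

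For the weighted statement~\eqref{est:weightLtLxQ0}, the additional $\langle s+r\rangle$ is absorbed by the $\langle s-r\rangle/\langle s+r\rangle$ prefactor in the interior (turning it into $\langle s-r\rangle$, which is then compensated by the $\langle s-r\rangle^{-2}$ from~\eqref{est:intpsilow}), while in the exterior it is swallowed by the $\langle s+r\rangle^{-3/2+\delta}$ decay in~\eqref{est:outpsiv}. The loss of three derivatives (from $N-1$ down to $N-4$) is what allows pushing \emph{two} pointwise Lemma~\ref{le:point} bounds onto the low-order factor in the borderline high--low pairing, so that the high-order factor is left in $L^{2}_{x}$ only through its bounded (not $\langle t\rangle^{2\delta}$-growing) energy; the resulting time integrand is marginal, of size $C^{2}\varepsilon^{2}/\langle s\rangle$, whose integral yields the claimed $\log(2+t)$.

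The main obstacle is the borderline piece $\langle s-r\rangle|\partial\Gamma^{I_{1}}\psi||\partial\Gamma^{I_{2}}v|$ appearing in part~(ii), where $\partial\Gamma^{I_{2}}v$ is only known through $\mathcal{E}_{1}$ without any $\langle s-r\rangle$ gain and a naive Hölder pairing leaves a parasitic $\langle t\rangle^{\delta}$. Here I would pair the pointwise decay $\|\langle s-r\rangle^{3/5}\partial\widehat{\Gamma}^{I_{1}}\psi\|_{L^{\infty}_{x}}\lesssim C\varepsilon\langle s\rangle^{-1/2+\delta}$ (obtained from~\eqref{est:intpsilow}) against the ghost-weighted $L^{2}_{x}$-norm $\|G_{a}\Gamma^{I_{2}}v/\langle s-r\rangle^{3/5}\|_{L^{2}_{x}}$ controlled by $\mathcal{G}_{1}(t,\Gamma^{I_{2}}v)\lesssim C^{2}\varepsilon^{2}$, and apply Cauchy--Schwarz in time with the $\langle s\rangle^{-1-\delta}/\langle s\rangle^{1+\delta}$ weight split already used in~\eqref{est:Ghostvweight}. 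Calibrating the exponents in this trade-off so that part~(i) produces an absolutely integrable-in-time majorant while part~(ii) produces only the logarithmic growth is the technical heart of the proof.
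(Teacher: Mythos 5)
Your overall decomposition matches the paper's: expand via \eqref{est:GamQ0}, split $\bchar_{\cint}+\bchar_{\cext}$, apply the null-form estimate \eqref{est:Q0inside} in $\cint$ and the exterior decay \eqref{est:outpsiv} combined with the cut-off weighted $L^2$ bounds in $\cext$, and pair the low-order factor in $L^\infty$ against the high-order one in $L^2$. The treatment of part~(i) is sound in outline. However, your handling of the borderline term in part~(ii) contains a genuine gap. You propose to bound $\langle s-r\rangle\,|\partial\widehat{\Gamma}^{I_1}\psi|\,|\partial\Gamma^{I_2}v|$ by pairing $\|\langle s-r\rangle^{3/5}\partial\widehat{\Gamma}^{I_1}\psi\|_{L^\infty_x}$ against the ghost-weighted norm $\|G_a\Gamma^{I_2}v/\langle s-r\rangle^{3/5}\|_{L^2_x}$ from $\mathcal{G}_1(t,\Gamma^{I_2}v)$. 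But the interior null-form decomposition \eqref{est:Q0inside} produces the factor $\partial\Gamma^{I_2}v$, a \emph{full} derivative, not the good derivative $G_a\Gamma^{I_2}v$; and the ghost-weight energy \eqref{def:KGenergy} controls only $\int_0^t\!\int|G_a u|^2/\langle r-s\rangle^{6/5}$ and $\int_0^t\!\int u^2/\langle r-s\rangle^{6/5}$, not $\int_0^t\!\int|\partial u|^2/\langle r-s\rangle^{6/5}$. So the pairing partner you propose is not available, and replacing $\partial v$ by $G_a v$ is not justified (nor would a good-bad splitting of $Q_0$ help directly here, since the extra $\langle s+r\rangle$ weight would then land on the $\psi$ factor and destroy the decay).

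The paper closes this borderline term differently: it regards $\partial\Gamma^{I_2}v$ as $\Gamma^{J}v$ with one extra derivative and applies the interior Klein--Gordon decay estimate \eqref{est:intvhigh}, which yields
\begin{equation*}
|\partial\Gamma^{I_2}v|\,\bchar_{\cint}\ \lesssim\ \frac{\langle s-r\rangle}{\langle s+r\rangle}\sum_{|I|\le N}|\partial\Gamma^{I}v|
+\langle s\rangle^{-\frac32+\delta}\sum_{|I|\le N-1}\big|\widehat{\Gamma}^I\psi\big|
+\langle s\rangle^{-\frac12}\sum_{|I|\le N-1}\big|\big[\widehat{\Gamma}^I\psi\big]_-\big|.
\end{equation*}
The first branch reinstates the $\langle s+r\rangle^{-1}$ gain that the $\langle s+r\rangle$-weighted estimate had cancelled; the second branch already carries $\langle s\rangle^{-3/2+\delta}$; and the third branch is paired against $\|\langle s-r\rangle^2\partial\widehat{\Gamma}^{I_1}\psi\|_{L^\infty_x}$ and the ghost-weight norm of $[\widehat{\Gamma}^I\psi]_-$ via \eqref{est:Ghostvweight} --- i.e.\ the ghost weight is used on the Dirac spinor, not on $v$. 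Your proof proposal needs this intermediate step (or an equivalent mechanism) before any ghost-weight Cauchy--Schwarz can be applied; without it the $L^2_x$ factor $\|\partial\Gamma^{I_2}v\|_{L^2}$ retains its $\langle t\rangle^{2\delta}$ growth and the time integral does not close.
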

\begin{proof}
	Proof of (i). From \eqref{est:GamQ0} and $1\le \bchar_{\mathcal{C}_{\rm{int}}}+\bchar_{\mathcal{C}_{\rm{ext}}}$, we infer that 
	\begin{equation*}
	\sum_{|I|\le N-1}\int_{0}^{t}\|{\Gamma}^{I}Q_{0}(\psi,v)\|_{L_{x}^{2}}\d s\lesssim 
	\mathcal{I}_{11}+\mathcal{I}_{12},
	\end{equation*}
	where
	\begin{equation*}
	\begin{aligned}
	\mathcal{I}_{11}&=\sum_{|I_{1}|+|I_{2}|\le N-1}\int_{0}^{t}\left\|Q_{0}\left({{\Gamma}}^{I_{1}}\psi,\Gamma^{I_{2}}v\right)\bchar_{\mathcal{C}_{\rm{int}}}\right\|_{L_{x}^{2}}\d s,\\
	\mathcal{I}_{12}&=\sum_{|I_{1}|+|I_{2}|\le N-1}\int_{0}^{t}\left\|Q_{0}\left({{\Gamma}}^{I_{1}}\psi,\Gamma^{I_{2}}v\right)\bchar_{\mathcal{C}_{\rm{ext}}}\right\|_{L_{x}^{2}}\d s.
	\end{aligned}
	\end{equation*}
	 Moreover, according to~\eqref{est:hatGa},~\eqref{est:hatparGa},~\eqref{est:Q0inside} and $N\ge 14$, we have
	\begin{equation*}
	\mathcal{I}_{11}\lesssim \mathcal{I}_{11}^{1}+\mathcal{I}_{11}^{2}+\mathcal{I}_{11}^{3}+\mathcal{I}_{11}^{4},
	\end{equation*}
	where
	\begin{equation*}
	\begin{aligned}
	\mathcal{I}_{11}^{1}&=\sum_{|I_{1}|\le N}\sum_{|I_{2}|\le N-4}\int_{0}^{t}\langle s \rangle^{-1}\big\|\big|\widehat{{\Gamma}}^{I_{1}}\psi\big|\left| \Gamma^{I_{2}}v\right|\bchar_{\mathcal{C}_{\rm{int}}}\big\|_{L_{x}^{2}}\d s,\\
	\mathcal{I}_{11}^{2}&=\sum_{|I_{1}|\le N-4 }\sum_{|I_{2}|\le N}\int_{0}^{t}\langle s \rangle^{-1}\big\|\big|\widehat{{\Gamma}}^{I_{1}}\psi\big|\left| \Gamma^{I_{2}}v\right|\bchar_{\mathcal{C}_{\rm{int}}}\big\|_{L_{x}^{2}}\d s,\quad \quad \quad \quad 
	\end{aligned}
	\end{equation*}
	\begin{equation*}
	\begin{aligned}
	\mathcal{I}_{11}^{3}&=\sum_{|I_{1}|\le N-8 }\sum_{|I_{2}|\le N-1}\int_{0}^{t}\langle s\rangle^{-1}\big\|\langle s-r\rangle\big|\partial \widehat{\Gamma}^{I_{1}}\psi\big|\left| \partial\Gamma^{I_{2}}v\right|\bchar_{\mathcal{C}_{\rm{int}}}\big\|_{L_{x}^{2}}\d s,\\
	\mathcal{I}_{11}^{4}&=\sum_{|I_{1}|\le N-1}\sum_{|I_{2}|\le N-7}\int_{0}^{t}\langle s\rangle^{-1}\big\|\langle s-r\rangle\big|\partial \widehat{\Gamma}^{I_{1}}\psi\big|\left| \partial\Gamma^{I_{2}}v\right|\bchar_{\mathcal{C}_{\rm{int}}}\big\|_{L_{x}^{2}}\d s.
	\end{aligned}
	\end{equation*}
	Using~\eqref{est:Bootpsi},~\eqref{est:Bootv},~\eqref{est:pointGlobal} and $0<\delta\ll 1$, we have  
	\begin{equation*}
	\begin{aligned}
	\mathcal{I}_{11}^{1}&\lesssim \sum_{\substack{|I_{1}|\le N\\ |I_{2}|\le N-4}}\int_{0}^{t}\langle s \rangle^{-1}\big\| \widehat{\Gamma}^{I_{1}}\psi\big\|_{L_{x}^{2}}\left\| \Gamma^{I_{2}}v\right\|_{L_{x}^{\infty}}\d s\lesssim C^{2}\varepsilon^{2}\int_{0}^{t}\langle s\rangle^{-\frac{3}{2}+\delta}\d s \lesssim C^{2}\varepsilon^{2},\\
	\mathcal{I}_{11}^{2}&\lesssim\sum_{\substack{|I_{1}|\le N-4\\ |I_{2}|\le N}}\int_{0}^{t}\langle s \rangle^{-1}\big\| \widehat{\Gamma}^{I_{1}}\psi\big\|_{L_{x}^{\infty}}\left\| \Gamma^{I_{2}}v\right\|_{L_{x}^{2}}\d s\lesssim C^{2}\varepsilon^{2}\int_{0}^{t}\langle s\rangle^{-\frac{3}{2}+\delta}\d s \lesssim C^{2}\varepsilon^{2}.
	\end{aligned}
	\end{equation*}
Then, from~\eqref{est:Bootpsi},~\eqref{est:Bootv},~\eqref{est:intpsihigh} and $0<\delta\ll 1$, we see that 
\begin{equation*}
\begin{aligned}
\mathcal{I}_{11}^{3}
&\lesssim \sum_{\substack{|I_{1}|\le N-8\\ |I_{2}|\le N-1}}\int_{0}^{t}\langle s\rangle^{-1}\big\|\langle s-r\rangle\big|\partial \widehat{\Gamma}^{I_{1}}\psi\big|\big\|_{L_{x}^{\infty}}\left\| \partial\Gamma^{I_{2}}v\right\|_{L_{x}^{2}}\d s\\
&\lesssim C^{2}\varepsilon^{2}\int_{0}^{t}\langle s\rangle^{-1}\langle s\rangle^{-\frac{1}{2}+\delta}\d s \lesssim C^{2}\varepsilon^{2}\int_{0}^{t}\langle s\rangle^{-\frac{3}{2}+\delta} \lesssim C^{2}\varepsilon^{2},\quad \quad \quad \quad 
\end{aligned}
\end{equation*}
	\begin{equation*}
	\begin{aligned}
	\mathcal{I}_{11}^{4}&\lesssim \sum_{\substack{|I_{1}|\le N\\ |I_{2}|\le N-7}}\int_{0}^{t}\langle s\rangle^{-1}\left(\big\|\widehat{{\Gamma}}^{I_{1}}\psi\big\|_{L_{x}^{2}}+\langle s\rangle^{\frac{1}{2}+\delta}\left\|\Gamma^{I_{1}}v\right\|_{L_{x}^{2}}\right)\left\|\partial\Gamma^{I_{2}}v\right\|_{L_{x}^{\infty}}\d s\\
	&\lesssim C^{2}\varepsilon^{2}\int_{0}^{t}\langle s\rangle^{-2}\langle s\rangle^{\frac{1}{2}+2\delta}\d s\lesssim C^{2}\varepsilon^{2}\int_{0}^{t}\langle s\rangle^{-\frac{3}{2}+2\delta} \lesssim C^{2}\varepsilon^{2}.
	\end{aligned}
	\end{equation*}
	
	Next, using~\eqref{est:hatGa},~\eqref{est:hatparGa},~\eqref{est:Bootpsi},~\eqref{est:Bootv},~\eqref{est:outpsiv} and  $N\ge 14$,
	\begin{equation*}
	\begin{aligned}
	\mathcal{I}_{12}&\lesssim \sum_{\substack{|I_{1}|\le N-1\\ |I_{2}|\le N-3}}\int_{0}^{t}\big\|\partial\widehat{{\Gamma}}^{I_{1}}\psi\big\|_{L_{x}^{2}}\left\|\left(\partial\Gamma^{I_{2}}v\right)\bchar_{\mathcal{C}_{\rm{ext}}}\right \|_{L_{x}^{\infty}}\d s\\
	&+\sum_{\substack{|I_{1}|\le N-3\\ |I_{2}|\le N-1}}\int_{0}^{t}\big\|\big(\partial\widehat{{\Gamma}}^{I_{1}}\psi\big)\bchar_{\mathcal{C}_{\rm{ext}}}\big\|_{L_{x}^{\infty}}\left\|\partial\Gamma^{I_{2}}v\right\|_{L_{x}^{2}}\d s\lesssim C^{2}\varepsilon^{2}\int_{0}^{t}\langle s\rangle^{-\frac{3}{2}+2\delta}\d s\lesssim C^{2}\varepsilon^{2}.
	\end{aligned}
	\end{equation*}
	Gathering these estimates, we have proved~\eqref{est:LtLxQ0}.
	
	Proof of (ii). Using again~\eqref{est:GamQ0} and $1\le \bchar_{\cint}+\bchar_{\cext}$, we have 
	\begin{equation*}
\sum_{|I|\le N-4}\int_{0}^{t}\|\langle s+r\rangle{\Gamma}^{I}Q_{0}(\psi,v)\|_{L_{x}^{2}}\d s\lesssim 
\mathcal{I}_{21}+\mathcal{I}_{22},
\end{equation*}
	where
\begin{equation*}
\begin{aligned}
\mathcal{I}_{21}&=\sum_{|I_{1}|+|I_{2}|\le N-4}\int_{0}^{t}\left\|\langle s+r\rangle Q_{0}\left({{\Gamma}}^{I_{1}}\psi,\Gamma^{I_{2}}v\right)\bchar_{\mathcal{C}_{\rm{int}}}\right\|_{L_{x}^{2}}\d s,\\
\mathcal{I}_{22}&=\sum_{|I_{1}|+|I_{2}|\le N-4}\int_{0}^{t}\left\|\langle s+r\rangle Q_{0}\left({{\Gamma}}^{I_{1}}\psi,\Gamma^{I_{2}}v\right)\bchar_{\mathcal{C}_{\rm{ext}}}\right\|_{L_{x}^{2}}\d s.
\end{aligned}
\end{equation*}
Moreover, according to~\eqref{est:hatGa},~\eqref{est:hatparGa},~\eqref{est:Q0inside} and $N\ge 14$, we have
\begin{equation*}
\mathcal{I}_{21}\lesssim \mathcal{I}_{21}^{1}+\mathcal{I}_{21}^{2}+\mathcal{I}_{21}^{3}+\mathcal{I}_{21}^{4},
\end{equation*}
where
\begin{align*}
\mathcal{I}_{21}^{1}&=\sum_{|I_{1}|\le N-3}\sum_{ |I_{2}|\le N-6}\int_{0}^{t}\big\|\big|\widehat{{\Gamma}}^{I_{1}}\psi\big|\left| \Gamma^{I_{2}}v\right|\bchar_{\mathcal{C}_{\rm{int}}}\big\|_{L_{x}^{2}}\d s,\\
\mathcal{I}_{21}^{2}&=\sum_{|I_{1}|\le N-8}\sum_{|I_{2}|\le N-3}\int_{0}^{t}\big\|\big|\widehat{{\Gamma}}^{I_{1}}\psi\big|\left| \Gamma^{I_{2}}v\right|\bchar_{\mathcal{C}_{\rm{int}}}\big\|_{L_{x}^{2}}\d s,\\
\mathcal{I}_{21}^{3}&=\sum_{|I_{1}|\le N-8}\sum_{|I_{2}|\le N-4}\int_{0}^{t}\big\|\langle s-r\rangle\big|\partial \widehat{\Gamma}^{I_{1}}\psi\big|\left| \partial\Gamma^{I_{2}}v\right|\bchar_{\mathcal{C}_{\rm{int}}}\big\|_{L_{x}^{2}}\d s,\\
\mathcal{I}_{21}^{4}&=\sum_{|I_{1}|\le N-4}\sum_{|I_{2}|\le N-9}\int_{0}^{t}\big\|\langle s-r\rangle\big|\partial \widehat{\Gamma}^{I_{1}}\psi\big|\left| \partial\Gamma^{I_{2}}v\right|\bchar_{\mathcal{C}_{\rm{int}}}\big\|_{L_{x}^{2}}\d s.
\end{align*}
First, from~\eqref{est:Bootpsi} and~\eqref{est:Bootv}, we check 
\begin{equation*}
\mathcal{I}^{1}_{21}\lesssim \sum_{\substack{|I_{1}|\le N-3\\ |I_{2}|\le N-6}}\int_{0}^{t}\big\|\widehat{{\Gamma}}^{I_{1}}\psi\big\|_{L_{x}^{2}}\left\| \Gamma^{I_{2}}v\right\|_{L_{x}^{\infty}}\d s\lesssim C^{2}\varepsilon^{2}\int_{0}^{t}\langle s\rangle^{-1}\d s \lesssim C^{2}\varepsilon^{2} \log (2+t).
\end{equation*}
Then, applying~\eqref{est:Bootpsi},~\eqref{est:Bootv},~\eqref{est:Ghostvweight},~\eqref{est:intpsilow} and~\eqref{est:intvhigh}, we can assert that
\begin{equation*}
\begin{aligned}
\mathcal{I}_{21}^{2}
&\lesssim \sum_{\substack{|I_{1}|\le N-8\\ |I_{2}|\le N-2}}\int_{0}^{t}\langle s\rangle^{-1}\big\|\langle s-r\rangle\widehat{{\Gamma}}^{I_{1}}\psi\|_{L_{x}^{\infty}}\|\partial \Gamma^{I_{2}}v\|_{L_{x}^{2}}\d s \\
&+\sum_{\substack{|I_{1}|\le N-8\\ |I_{2}|\le N-3}}\int_{0}^{t}\langle s\rangle^{-\frac{1}{2}}\big\|\widehat{{\Gamma}}^{I_{1}}\psi\big\|_{L_{x}^{\infty}}\big\|\widehat{{\Gamma}}^{I_{2}}\psi\big\|_{L_{x}^{2}}\d s\\
&\lesssim C^{2}\varepsilon^{2}\int_{0}^{t}\langle s\rangle^{-\frac{3}{2}+2\delta}\d s +C^{2}\varepsilon^{2}\int_{0}^{t}\langle s\rangle^{-1}\d s\lesssim C^{2}\varepsilon^{2}\log (2+t),
\end{aligned}
\end{equation*}
\begin{equation*}
\begin{aligned}
\mathcal{I}_{21}^{3}
&\lesssim \sum_{\substack{|I_{1}|\le N-8\\ |I_{2}|\le N-2}}\int_{0}^{t}\langle s\rangle^{-1}\big\|\langle s-r\rangle^{2}\partial\widehat{{\Gamma}}^{I_{1}}\psi\|_{L_{x}^{\infty}}\|\partial \Gamma^{I_{2}}v\|_{L_{x}^{2}}\d s \\
&+\sum_{\substack{|I_{1}|\le N-8\\ |I_{2}|\le N-3}}\int_{0}^{t}\langle s\rangle^{-\frac{3}{2}+\delta}\big\|\langle s-r\rangle\partial\widehat{{\Gamma}}^{I_{1}}\psi\big\|_{L_{x}^{\infty}}\big\|\widehat{{\Gamma}}^{I_{2}}\psi\big\|_{L_{x}^{2}}\d s\\
&+\sum_{\substack{|I_{1}|\le N-8\\ |I_{2}|\le N-3}}\int_{0}^{t}\langle s\rangle^{-\frac{1}{2}}\big\|\langle s-r\rangle^{2}\partial\widehat{{\Gamma}}^{I_{1}}\psi\big\|_{L_{x}^{\infty}}\left\|\frac{[\widehat{{\Gamma}}^{I_{2}}\psi]_{-}}{\langle s-r\rangle}\right\|_{L_{x}^{2}}\d s\lesssim C^{2}\varepsilon^{2}.
\end{aligned}
\end{equation*}
Next, combining~\eqref{est:Bootpsi},~\eqref{est:Bootv},~\eqref{est:intpsiN-2} and~\eqref{est:intvlow}, we get
\begin{equation*}
\begin{aligned}
\mathcal{I}_{21}^{4}&\lesssim \sum_{\substack{|I_{1}|\le N-1\\ |I_{2}|\le N-9}}\int_{0}^{t}\left(\big\|\widehat{{\Gamma}}^{I_{1}}\psi\|_{L_{x}^{2}}\|\partial\Gamma^{I_{2}}v\|_{L_{x}^{\infty}}+\langle s\rangle^{\frac{1}{2}+\delta}\|\Gamma^{I_{1}}v\|_{L_{x}^{2}}\left\|\frac{\partial\Gamma^{I_{2}}v}{\langle s-r\rangle}\right\|_{L_{x}^{\infty}}\right)\d s \\
&\lesssim C^{2}\varepsilon^{2}\int_{0}^{t}\left(\langle s\rangle^{-1}+\langle s\rangle^{-\frac{3}{2}+2\delta}\right)\d s\lesssim C^{2}\varepsilon^{2}\log (2+t).
\end{aligned}
\end{equation*}

Last, using again~\eqref{est:hatGa},~\eqref{est:Bootpsi},~\eqref{est:Bootv},~\eqref{est:outpsiv} and $N\ge 14$,
\begin{equation*}
\begin{aligned}
\mathcal{I}_{22}&\lesssim \sum_{\substack{|I_{1}|\le N-4\\ |I_{2}|\le N-4}}\int_{0}^{t}\big\|\langle r-s\rangle\chi(r-2s)\big|\partial\widehat{{\Gamma}}^{I_{1}}\psi\big|\big\|_{L_{x}^{2}}\left\|\left(\partial\Gamma^{I_{2}}v\right)\bchar_{\mathcal{C}_{\rm{ext}}}\right \|_{L_{x}^{\infty}}\d s\\
&\lesssim C^{2}\varepsilon^{2}\int_{0}^{t}\langle s\rangle^{-\frac{3}{2}+2\delta}\d s\lesssim C^{2}\varepsilon^{2}.
\end{aligned}
\end{equation*}
Gathering these estimates, we have proved~\eqref{est:weightLtLxQ0}.
	\end{proof}

Second, we present the $L_{t}^{1}L_{x}^{2}$ and weighted $L_{t}^{1}L_{x}^{2}$ estimates for $\Gamma^{I}\mathcal{N}_{1}$ and $\Gamma^{I}\mathcal{N}_{2}$ (see \eqref{equ:Ns00} for the expressions of $\mathcal{N}_1$ and $\mathcal{N}_2$).
\begin{lemma}\label{le:estN1N2}
		For all $t\in [0,T^{*}(\psi_{0},\vec{v}_{0}))$, the following estimates hold.
	\begin{enumerate}
		
		\item {\rm{$L_{t}^{1}L_{x}^{2}$ estimates on $\Gamma^{I}\mathcal{N}_{1}$ and $\Gamma^{I}\mathcal{N}_{2}$}}. It holds
		\begin{equation}\label{est:LtLxN1N2}
		\sum_{|I|\le N-1}\int_{0}^{t}\left(\|{\Gamma}^{I}\mathcal{N}_{1}(\psi,v)\|_{L_{x}^{2}}+\|{\Gamma}^{I}\mathcal{N}_{2}(\psi,\psi^{*})\|_{L_{x}^{2}}\right)\d s \lesssim C^{3}\varepsilon^{3}.
		\end{equation}
		
		\item {\rm{Weight $L_{t}^{1}L_{x}^{2}$ estimate on $\Gamma^{I}\mathcal{N}_{1}$}}. It holds
		\begin{equation}\label{est:weightLtLxN1I}
		\sum_{|I|\le N-4}\int_{0}^{t}\left\|\langle s+r\rangle {\Gamma}^{I}\mathcal{N}_{1}(\psi,v)\right\|_{L_{x}^{2}}\d s\lesssim C^{3}\varepsilon^{3}\log (2+t).
		\end{equation}
		
		\item {\rm{Weight $L_{t}^{1}L_{x}^{2}$ estimate on $\Gamma^{I}\mathcal{N}_{2}$}}. It holds
		\begin{equation}\label{est:weightLtLxN2I}
		\sum_{|I|\le N-4}\int_{0}^{t}\left\|\langle s+r\rangle {\Gamma}^{I}\mathcal{N}_{2}(\psi,\psi^{*})\right\|_{L_{x}^{2}}\d s\lesssim C^{2}\varepsilon^{2} \langle t\rangle^{\delta}.
		\end{equation}
		
	\end{enumerate}
\end{lemma}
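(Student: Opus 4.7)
The plan is to treat all three parts by the Leibniz expansion / trichotomy strategy used in Lemma~\ref{le:estQ0}: after applying $\Gamma^{I}$ (or $\widehat{\Gamma}^{I}$ for Dirac factors, via~\eqref{est:hatGfPhi}--\eqref{est:hatGG}), each derivative hits one of the three factors, producing a sum of trilinear expressions that I split into pieces according to which factor is high-derivative. The two key structural observations are (a) $\mathcal{N}_{1}$ has two copies of $v$, which contribute two powers of the strong Klein--Gordon pointwise decay $\langle t+r\rangle^{-1}$ from \eqref{est:Bootv}, and (b) $\mathcal{N}_{2}$ contains the quadratic form $\psi^{*}\gamma^{0}\psi$, to which Lemma~\ref{lem:hidden} applies, so every surviving term carries at least one $[\widehat{\Gamma}^{J}\psi]_{-}$ factor which enjoys the improved pointwise decay $\langle t+r\rangle^{-3/2+\delta}$ from the fourth line of~\eqref{est:Bootpsi}.

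For part (i), I distribute the $N-1$ derivatives among the three factors and use the pigeonhole principle: since $N\ge 14$, among the three multi-indices $(I_{1},I_{2},I_{3})$ with $|I_{1}|+|I_{2}|+|I_{3}|\le N-1$ at least two satisfy $|I_{j}|\le (N-1)/2\le N-7$ and so admit pointwise bounds. For $\mathcal{N}_{1}\sim v\cdot v\cdot \partial\psi + v\cdot\partial v\cdot \psi$, I put the two low-derivative Klein--Gordon factors in $L^{\infty}_{x}$ (gaining $\langle s\rangle^{-2}$) and the remaining factor in $L^{2}_{x}$ using the bootstrap energy $\le C\varepsilon$; this yields an integrand $\lesssim C^{3}\varepsilon^{3}\langle s\rangle^{-3/2+\delta}$, whose time-integral is $O(C^{3}\varepsilon^{3})$. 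For $\mathcal{N}_{2}\sim [\widehat{\Gamma}^{I_{1}}\psi]_{-}\cdot \widehat{\Gamma}^{I_{2}}\psi\cdot\widehat{\Gamma}^{I_{3}}\psi$, I place one $[\widehat{\Gamma}^{I_{1}}\psi]_{-}$ and one $\widehat{\Gamma}^{I_{2}}\psi$ in $L^{\infty}_{x}$ to gather $\langle s+r\rangle^{-2+2\delta}$, and keep the third factor in $L^{2}_{x}$; the resulting integrand is integrable in $s$.

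For part (ii), the extra weight $\langle s+r\rangle$ is absorbed into one of the $v$-factors: the two pointwise $v$'s contribute $\langle s+r\rangle\cdot\langle s+r\rangle^{-2}=\langle s+r\rangle^{-1}$, while the remaining factor (either $\Gamma^{I}v$, $\partial\Gamma^{I}v$, $\widehat{\Gamma}^{I}\psi$, or $\partial\widehat{\Gamma}^{I}\psi$) sits at order $\le N-1$ with $L^{2}_{x}$-norm $\le C\varepsilon$; the integral of $\langle s\rangle^{-1}$ on $[0,t]$ produces the $\log(2+t)$ factor. For part (iii), I use the hidden-structure bound $|[\widehat{\Gamma}^{I_{1}}\psi]_{-}|\lesssim C\varepsilon\langle s+r\rangle^{-3/2+\delta}$ for the low-derivative factor together with the global pointwise bound $|\widehat{\Gamma}^{I_{2}}\psi|\lesssim C\varepsilon\langle s+r\rangle^{-1/2}$ from~\eqref{est:pointGlobal}, which gives $\langle s+r\rangle\cdot |[\widehat{\Gamma}^{I_{1}}\psi]_{-}|\cdot |\widehat{\Gamma}^{I_{2}}\psi|\lesssim C^{2}\varepsilon^{2}\langle s+r\rangle^{-1+\delta}$; pairing with $\|\widehat{\Gamma}^{I_{3}}\psi\|_{L^{2}_{x}}\le C\varepsilon$ from the bootstrap and integrating in time yields the claimed $C^{2}\varepsilon^{2}\langle t\rangle^{\delta}$ bound (after using $C\varepsilon\le 1$ to collapse one power).

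The main obstacle is the bookkeeping of the indices in the borderline cases, where one $|I_{j}|$ saturates the available budget. In such cases the two remaining indices are forced to be very small, so the strong pointwise bounds on $v$ (needing $|I|\le N-6$) and on $[\widehat{\Gamma}^{I}\psi]_{-}$ (needing $|I|\le N-7$) still apply; the constraints $(N-1)/3\le N-7$ and $(N-4)/3\le N-7$ both hold for $N\ge 14$, so the pigeonhole principle guarantees a legal allocation in every subcase. Throughout, the exterior pieces are handled as in the proof of Lemma~\ref{le:estQ0} using the decay~\eqref{est:outpsiv}, which is strictly better than what is needed in the interior.
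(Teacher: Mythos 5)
Your plan for parts \emph{(i)} and \emph{(ii)} is essentially the same Leibniz/trichotomy decomposition that the paper uses, but your treatment of \emph{(iii)} contains a fundamental gap, and there is a secondary gap in \emph{(i)} as well.

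\textbf{Part \emph{(iii)} is not a pointwise/pigeonhole argument.} Your proposal rests on the premise that after applying Lemma~\ref{lem:hidden} the factor carrying $[\cdot]_{-}$ always admits the pointwise bound $|[\widehat{\Gamma}^{J}\psi]_{-}|\lesssim C\varepsilon\langle s+r\rangle^{-3/2+\delta}$. That bound (the fourth line of \eqref{est:Bootpsi}) holds only for $|J|\le N-7$. But Lemma~\ref{lem:hidden} does not let you place the $[\cdot]_-$ on the factor with the small index: it produces \emph{all three} terms $[\Phi_1]_-^*\gamma^0[\Phi_2]_-$, $[\Phi_1]_-^*\gamma^0[\Phi_2]_+$, $[\Phi_1]_+^*\gamma^0[\Phi_2]_-$, so when the highest derivative (up to $N-4$) lands on, say, $\Phi_1$, the piece $[\widehat{\Gamma}^{I_1}\psi]_-^*\gamma^0[\widehat{\Gamma}^{I_2}\psi]_+\widehat{\Gamma}^{I_3}\psi$ has the $[\cdot]_-$ precisely on the high-order factor. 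For that factor the only available estimate is the $L^2$-energy bound (no spatial decay), and putting the other two factors in $L^\infty$ leaves $\|\langle s+r\rangle\Gamma^I\mathcal{N}_2\|_{L^2}\lesssim C^3\varepsilon^3$ with no decay in $s$ at all; the time integral is then $O(t)$, not $O(\langle t\rangle^\delta)$. This is not a bookkeeping issue that the pigeonhole resolves. The paper's actual proof of \eqref{est:weightLtLxN2I} is a five-step argument: it converts $[\widehat{\Gamma}^{I}\psi]_-$ to $G_b\widehat{\Gamma}^{I}\Psi$ via \eqref{equ:Hiddenpsi-}, absorbs the weight by $\langle t+r\rangle|G_b f|\lesssim |Sf|+|\Gamma f|$ (\eqref{est:Gaparf}), reduces the bound to $\|S\Gamma^I\Psi\|_{L^2}+\|\Gamma\Gamma^I\Psi\|_{L^2}$, controls those via the conformal energy of $\widetilde{\Psi}$ (\eqref{est:Con}), needs the pure $L^2$ bound on $\widetilde{\Psi}$ from Lemma~\ref{le:Lx2wave} (which in turn uses the $L^1$ smallness of $\psi_0$), and finally resolves the inherent circularity (the conformal energy of $\widetilde{\Psi}$ feeds back into $\mathcal{N}_2$) by Gronwall's inequality \eqref{est:Gron}. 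None of this machinery is visible in your proposal, and there is no way to reproduce the conclusion without something of this kind.

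\textbf{Part \emph{(i)} has the same blind spot.} When the high derivative (up to $N-1$) sits on the $[\cdot]_-$ factor, your allocation gives an integrand $\lesssim C^3\varepsilon^3\langle s\rangle^{-1+2\delta}$, which integrates to $\langle t\rangle^{2\delta}$ rather than the required constant. The paper's $\mathcal{I}_{34}$ closes this by pairing the $\langle s-r\rangle^{-1}$ decay of a low-order $\widehat{\Gamma}^{I_2}\psi$ factor with $\big\|\frac{[\widehat{\Gamma}^{I_3}\psi]_-}{\langle s-r\rangle}\big\|_{L^2}$ and the integrated ghost-weight estimate \eqref{est:Ghostvweight}, exploiting the spacetime integral $\int_0^t\int\frac{|[\widehat{\Gamma}^I\psi]_-|^2}{\langle r-s\rangle^{6/5}}\,dx\,ds$ from $\mathcal{E}^D$. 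Your proposal should invoke this explicitly; the $L^\infty\times L^\infty\times L^2$ allocation you describe does not suffice.
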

\begin{proof}
	Proof of (i). From~\eqref{est:hatGa},~\eqref{est:hatGG},~\eqref{equ:HiddenPhi} and $N\ge 14$, we infer that 
		\begin{align*}
	\sum_{|I|\le N-1}\int_{0}^{t}\|{\Gamma}^{I}\mathcal{N}_{1}(\psi,v)\|_{L_{x}^{2}}\d s &\lesssim \mathcal{I}_{31}+\mathcal{I}_{32},\\
	\sum_{|I|\le N-1}\int_{0}^{t}\|{\Gamma}^{I}\mathcal{N}_{2}(\psi,\psi^{*})\|_{L_{x}^{2}}\d s &\lesssim \mathcal{I}_{33}+\mathcal{I}_{34},
	\end{align*}
	where
	\begin{equation*}
	\begin{aligned}
	\mathcal{I}_{31}&=\sum_{{|I_{1}|\le N-6}}\sum_{{|I_{2}|\le N-6}}\sum_{{|I_{3}|\le N}}\int_{0}^{t}\big\|(\Gamma^{I_{1}}v)(\Gamma^{I_{2}}v)(\widehat{{\Gamma}}^{I_{3}}\psi)\big\|_{L_{x}^{2}}\d s,\\
	\mathcal{I}_{32}&=\sum_{{|I_{1}|\le N-6}}\sum_{{|I_{2}|\le N}}\sum_{|I_{3}|\le N-6}\int_{0}^{t}\big\|(\Gamma^{I_{1}}v)(\Gamma^{I_{2}}v)(\widehat{{\Gamma}}^{I_{3}}\psi)\big\|_{L_{x}^{2}}\d s,\\
	\mathcal{I}_{33}&=\sum_{{|I_{1}|\le N-4}}\sum_{{|I_{2}|\le N-1}}\sum_{|I_{3}|\le N-7}\int_{0}^{t}\big\|\big(\widehat{\Gamma}^{I_{1}}\psi\big)\big(\widehat{\Gamma}^{I_{2}}\psi\big)\big(\big[\widehat{{\Gamma}}^{I_{3}}\psi\big]_{-}\big)\big\|_{L_{x}^{2}}\d s,\\
	\mathcal{I}_{34}&=\sum_{{|I_{1}|\le N-4}}\sum_{{|I_{2}|\le N-7}}\sum_{{|I_{3}|\le N-1}}\int_{0}^{t}\big\|\big(\widehat{\Gamma}^{I_{1}}\psi\big)\big(\widehat{\Gamma}^{I_{2}}\psi\big)\big(\big[\widehat{{\Gamma}}^{I_{3}}\psi\big]_{-}\big)\big\|_{L_{x}^{2}}\d s.
	\end{aligned}
	\end{equation*}
	First, from~\eqref{est:Bootpsi},~\eqref{est:Bootv} and~\eqref{est:pointGlobal}, we see that 
	\begin{equation*}
	\begin{aligned}
	\mathcal{I}_{31}&\lesssim\sum_{\substack{|I_{1}|\le N-6 \\ |I_{2}|\le N-6}}\sum_{{|I_{3}|\le N}}\int_{0}^{t}\big\|\Gamma^{I_{1}}v\|_{L_{x}^{\infty}}\left\|\Gamma^{I_{2}}v\right\|_{L_{x}^{\infty}}\big\|\widehat{{\Gamma}}^{I_{3}}\psi\big\|_{L_{x}^{2}}\d s\\
	&\lesssim C^{3}\varepsilon^{3}\int_{0}^{t}\langle s\rangle^{-1}\langle s\rangle^{-1}\langle s\rangle^{\delta}\d s \lesssim C^{3}\varepsilon^{3}\int_{0}^{t}\langle s\rangle^{-2+\delta}\d s \lesssim C^{3}\varepsilon^{3},
	\end{aligned}
	\end{equation*}
		\begin{equation*}
	\begin{aligned}
	\mathcal{I}_{32}&\lesssim\sum_{\substack{|I_{1}|\le N-6 \\ |I_{3}|\le N-6}}\sum_{{|I_{2}|\le N}}\int_{0}^{t}\big\|\Gamma^{I_{1}}v\|_{L_{x}^{\infty}}\left\|\Gamma^{I_{2}}v\right\|_{L_{x}^{2}}\big\|\widehat{{\Gamma}}^{I_{3}}\psi\big\|_{L_{x}^{\infty}}\d s\\
	&\lesssim C^{3}\varepsilon^{3}\int_{0}^{t}\langle s\rangle^{-1}\langle s\rangle^{2\delta}\langle s\rangle^{-\frac{1}{2}}\d s \lesssim C^{3}\varepsilon^{3}\int_{0}^{t}\langle s\rangle^{-\frac{3}{2}+2\delta}\d s \lesssim C^{3}\varepsilon^{3},
	\end{aligned}
	\end{equation*}
		\begin{equation*}
	\begin{aligned}
	\mathcal{I}_{33}&\lesssim\sum_{\substack{|I_{1}|\le N-4 \\ |I_{3}|\le N-7}}\sum_{{|I_{2}|\le N-1}}\int_{0}^{t}\big\|\widehat{\Gamma}^{I_{1}}\psi\big\|_{L_{x}^{\infty}}\big\|\widehat{\Gamma}^{I_{2}}\psi\big\|_{L_{x}^{2}}\big\|\big[\widehat{{\Gamma}}^{I_{3}}\psi\big]_{-}\big\|_{L_{x}^{\infty}}\d s\\
	&\lesssim C^{3}\varepsilon^{3}\int_{0}^{t}\langle s\rangle^{-\frac{1}{2}}\langle s\rangle^{-\frac{3}{2}+\delta}\d s \lesssim C^{3}\varepsilon^{3}\int_{0}^{t}\langle s\rangle^{-2+\delta}\d s \lesssim C^{3}\varepsilon^{3}.
	\end{aligned}
	\end{equation*}
	Then, from~\eqref{est:Bootpsi} and~\eqref{est:Ghostvweight}, we get
	\begin{equation*}
	\begin{aligned}
	\mathcal{I}_{34}
	&\lesssim \sum_{\substack{|I_{1}|\le N-4\\ |I_{2}|\le N-7}}\sum_{{|I_{3}|\le N-1}}\int_{0}^{t}\big\|\widehat{\Gamma}^{I_{1}}\psi\big\|_{L_{x}^{\infty}}\big\|\langle s-r\rangle\widehat{\Gamma}^{I_{2}}\psi\big\|_{L_{x}^{\infty}}\left\|\frac{\big[\widehat{{\Gamma}}^{I_{3}}\psi\big]_{-}}{\langle s-r\rangle}\right\|_{L_{x}^{2}}\d s\\
	&\lesssim C^{2}\varepsilon^{2}\sum_{|I_{3}|\le N-1}\int_{0}^{t}\langle s\rangle^{-1+\delta}\left\|\frac{\big[\widehat{{\Gamma}}^{I_{3}}\psi\big]_{-}}{\langle s-r\rangle}\right\|_{L_{x}^{2}}\d s\lesssim C^{3}\varepsilon^{3}.
	\end{aligned}
	\end{equation*}
	Gathering these estimates, we have proved~\eqref{est:LtLxN1N2}.
	
	Proof of (ii). Using again $N\ge 14$ and $1\le \bchar_{\cint}+\bchar_{\cext}$, we have 
	\begin{equation*}
\sum_{|I|\le N-4}\int_{0}^{t}\|\langle s+r\rangle{\Gamma}^{I}\mathcal{N}_{1}(\psi,v)\|_{L_{x}^{2}}\d s \lesssim \mathcal{I}_{41}+\mathcal{I}_{42}+\mathcal{I}_{43},
\end{equation*}
where
\begin{equation*}
\begin{aligned}
	\mathcal{I}_{41}&=\sum_{{|I_{1}|\le N-6}}\sum_{{|I_{2}|\le N-6}}\sum_{{|I_{3}|\le N-3}}\int_{0}^{t}\big\|\langle s+r\rangle(\Gamma^{I_{1}}v)(\Gamma^{I_{2}}v)(\widehat{{\Gamma}}^{I_{3}}\psi)\big\|_{L_{x}^{2}}\d s,\\
\mathcal{I}_{42}&=\sum_{{|I_{1}|\le N-8}}\sum_{{|I_{2}|\le N-3}}\sum_{|I_{3}|\le N-7}\int_{0}^{t}\big\|\langle s+r\rangle(\Gamma^{I_{1}}v)(\Gamma^{I_{2}}v)(\widehat{{\Gamma}}^{I_{3}}\psi)\bchar_{\cint}\big\|_{L_{x}^{2}}\d s,\\
\mathcal{I}_{43}&=\sum_{{|I_{1}|\le N-8}}\sum_{{|I_{2}|\le N-3}}\sum_{|I_{3}|\le N-7}\int_{0}^{t}\big\|\langle s+r\rangle(\Gamma^{I_{1}}v)(\Gamma^{I_{2}}v)(\widehat{{\Gamma}}^{I_{3}}\psi)\bchar_{\cext}\big\|_{L_{x}^{2}}\d s.
\end{aligned}
\end{equation*}

First, from~\eqref{est:Bootpsi} and~\eqref{est:Bootv}, we get
\begin{equation*}
\begin{aligned}
	\mathcal{I}_{41}&
	\lesssim \sum_{\substack{|I_{1}|\le N-6\\ |I_{2}|\le N-6}}\sum_{{|I_{3}|\le N-3}}\int_{0}^{t}\left\|\langle s+r\rangle\Gamma^{I_{1}}v\right\|_{L_{x}^{\infty}}\left\|\Gamma^{I_{2}}v\right\|_{L_{x}^{\infty}}\big\|\widehat{{\Gamma}}^{I_{3}}\psi\big\|_{L_{x}^{2}}\d s\\
	&\lesssim C^{3}\varepsilon^{3}\int_{0}^{t}\langle s\rangle^{-1}\d s \lesssim C^{3}\varepsilon^{3}\log (2+t).
	\end{aligned}
\end{equation*}
Then, from~\eqref{est:Bootpsi},~\eqref{est:Bootv} and~\eqref{est:intvlow},
\begin{equation*}
\begin{aligned}
\mathcal{I}_{42}&\lesssim \sum_{\substack{|I_{1}|\le N-8\\|I_{3}|\le N-7 }}\sum_{{|I_{2}|\le N-3}}\int_{0}^{t}\left\|\frac{\langle s+r\rangle}{\langle s-r\rangle}(\Gamma^{I_{1}}v)\bchar_{\cint}\right\|_{L_{x}^{\infty}}\left\|\Gamma^{I_{2}}v\right\|_{L_{x}^{2}}\big\|\langle s-r\rangle\widehat{{\Gamma}}^{I_{3}}\psi\big\|_{L_{x}^{\infty}}\d s\\
&\lesssim C^{3}\varepsilon^{3}\int_{0}^{t}\left(\langle s\rangle^{-1}+\langle s\rangle^{-1+\delta}\right)\langle s\rangle^{-\frac{1}{2}+\delta}\d s \lesssim C^{3}\varepsilon^{3}\int_{0}^{t}\langle s\rangle^{-\frac{3}{2}+2\delta}\d s\lesssim C^{3}\varepsilon^{3}.
\end{aligned}
\end{equation*}
Last, using again~\eqref{est:Bootpsi},~\eqref{est:Bootv} and~\eqref{est:outpsiv}, 
\begin{equation*}
\begin{aligned}
\mathcal{I}_{43}
&\lesssim \sum_{\substack{|I_{1}|\le N-8\\ |I_{3}|\le N-7}}\sum_{|I_{2}|\le N-3}\int_{0}^{t}\left\|\langle s+r\rangle\Gamma^{I_{1}}v\right\|_{L_{x}^{\infty}}\left\|\Gamma^{I_{2}}v\right\|_{L_{x}^{2}}\big\|(\widehat{{\Gamma}}^{I_{3}}\psi)\bchar_{\cext}\big\|_{L_{x}^{\infty}}\d s\\
&\lesssim C^{3}\varepsilon^{3}\int_{0}^{t} \langle s\rangle^{-\frac{3}{2}}\d s \lesssim C^{3}\varepsilon^{3}.
\end{aligned}
\end{equation*}
Gathering these estimates, we have proved~\eqref{est:weightLtLxN1I}.
	
	Proof of (iii). {\textbf{Step 1.}} Preliminary estimate for $\Gamma^{I}\mathcal{N}_{2}$. We claim that 
	\begin{equation}\label{est:prelestG2}
	\begin{aligned}
	&\sum_{|I|\le N-4}\int_{0}^{t}\left\|\langle s+r\rangle\Gamma^{I}\mathcal{N}_{2}(\psi,\psi^{*})\right\|_{L_{x}^{2}}\d s\\
	&\lesssim C^{2}\varepsilon^{2}\sum_{|I|\le N-4}\int_{0}^{t}\langle s\rangle^{-1}\left(\left\|S\Gamma^{I}\Psi\right\|_{L_{x}^{2}}+\left\|\Gamma\Gamma^{I}\Psi\right\|_{L_{x}^{2}}\right)\d s.
	\end{aligned}
	\end{equation}
	Indeed, from~\eqref{equ:HiddenPhi}, we see that 
	\begin{equation*}
	\sum_{|I|\le N-4}\int_{0}^{t}\left\|\langle s+r\rangle\Gamma^{I}\mathcal{N}_{2}(\psi,\psi^{*})\right\|_{L_{x}^{2}}\d s\lesssim \mathcal{I}_{5},
	\end{equation*}
	where
	\begin{equation*}
	\begin{aligned}
	\mathcal{I}_{5}&=\sum_{|I_{1}|\le N-4}\sum_{|I_{2}|\le N-4}\sum_{|I_{3}|\le N-4}\int_{0}^{t}\big\|\langle s+r\rangle \big(\widehat{\Gamma}^{I_{1}}\psi\big)\big(\widehat{\Gamma}^{I_{2}}\psi\big)\big(\big[\widehat{\Gamma}^{I_{3}}\psi\big]_{-}\big)\big\|_{L_{x}^{2}}\d x.
	\end{aligned}
	\end{equation*}
	Note that, from~\eqref{est:hatGa},~\eqref{est:comm},~\eqref{est:Gaparf},~\eqref{equ:Hiddenpsi-} and~\eqref{est:pointGlobal}, we have
	\begin{equation*}
	\begin{aligned}
	\mathcal{I}_{5}
	&\lesssim \sum_{\substack{|I_{1}|\le N-4\\ |I_{2}|\le N-4}}\sum_{|I_{3}|\le N-4}\int_{0}^{t}\big\| \widehat{\Gamma}^{I_{1}}\psi\big\|_{L_{x}^{\infty}}\big\|\widehat{\Gamma}^{I_{2}}\psi\big\|_{L_{x}^{\infty}}\big\|\langle s+r\rangle\big(\big[\widehat{\Gamma}^{I_{3}}\psi\big]_{-}\big)\big\|_{L_{x}^{2}}\d x\\
	&\lesssim C^{2}\varepsilon^{2}\sum_{|I|\le N-4}\int_{0}^{t}\langle s\rangle^{-1}\left(\left\|S\Gamma^{I}\Psi\right\|_{L_{x}^{2}}+\left\|\Gamma\Gamma^{I}\Psi\right\|_{L_{x}^{2}}\right)\d s,
	\end{aligned}
	\end{equation*}
	which implies~\eqref{est:prelestG2}.
	
	{\textbf{Step 2.}} Preliminary estimate for $\left\|S\Gamma^{I}\Psi\right\|_{L_{x}^{2}}$ and $\left\|\Gamma\Gamma^{I}\Psi\right\|_{L_{x}^{2}}$. We claim that 
    \begin{equation}\label{est:StildeS}
    \begin{aligned}
    &\sum_{|I|\le N-4}\left(\left\|S\Gamma^{I}\Psi\right\|_{L_{x}^{2}}+\left\|\Gamma\Gamma^{I}\Psi\right\|_{L_{x}^{2}}\right)\\
    &\lesssim C^{2}\varepsilon^{2}+\sum_{|I|\le N-4}\left(\left\|S\Gamma^{I}\widetilde{\Psi}\right\|_{L_{x}^{2}}+\left\|\Gamma\Gamma^{I}\widetilde{\Psi}\right\|_{L_{x}^{2}}\right).
    \end{aligned}
    \end{equation}
    Indeed, from $\|S\|\le \|\langle t+r\rangle\partial\|$ and the definition of $\Psi$ and $\widetilde{\Psi}$ in \S\ref{Se:Hidden}, we see that 
    \begin{equation*}
    \begin{aligned}
    &\sum_{|I|\le N-4}\left(\left\|S\Gamma^{I}\Psi\right\|_{L_{x}^{2}}+\left\|\Gamma\Gamma^{I}\Psi\right\|_{L_{x}^{2}}\right)\\
    &\lesssim \sum_{|I|\le N-4}\left(\left\|\langle t+r\rangle\partial\Gamma^{I}(v\psi)\right\|_{L_{x}^{2}}+\|\Gamma\Gamma^{I}(v\psi)\|_{L_{x}^{2}}+\big\|S\Gamma^{I}\widetilde{\Psi}\big\|_{L_{x}^{2}}+\big\|\Gamma\Gamma^{I}\widetilde{\Psi}\big\|_{L_{x}^{2}}\right).
    \end{aligned}
    \end{equation*}
    Moreover, using~\eqref{est:hatGa}, $1\le \bchar_{\cint}+\bchar_{\cext}$ and $N\ge 14$, we have 
    \begin{equation*}
    \sum_{|I|\le N-4}\left(\left\|\langle t+r\rangle\partial\Gamma^{I}(v\psi)\right\|_{L_{x}^{2}}+\|\Gamma\Gamma^{I}(v\psi)\|_{L_{x}^{2}}\right)\lesssim I_{61}+\mathcal{I}_{62}+\mathcal{I}_{63}+\mathcal{I}_{64},
    \end{equation*}
    where
    \begin{equation*}
    \begin{aligned}
    \mathcal{I}_{61}&=\sum_{|I_{1}|\le N-3}\sum_{|I_{2}|\le N-4}\big\|\big(\Gamma^{I_{1}}v\big)\big(\widehat{{\Gamma}}^{I_{2}}\psi\big)\big\|_{L_{x}^{2}},\\
    \mathcal{I}_{62}&=\sum_{|I_{1}|\le N-6}\sum_{|I_{2}|\le N-3}\big\|\langle t+r\rangle\big(\Gamma^{I_{1}}v\big)\big(\widehat{{\Gamma}}^{I_{2}}\psi\big)\big\|_{L_{x}^{2}},\\
    \mathcal{I}_{63}&=\sum_{|I_{1}|\le N-3}\sum_{|I_{2}|\le N-7}\big\|\langle t+r\rangle\big(\Gamma^{I_{1}}v\big)\big(\widehat{{\Gamma}}^{I_{2}}\psi\big)\bchar_{\cint}\big\|_{L_{x}^{2}},\\
    \mathcal{I}_{64}&=\sum_{|I_{1}|\le N-3}\sum_{|I_{2}|\le N-7}\big\|\langle t+r\rangle\big(\Gamma^{I_{1}}v\big)\big(\widehat{{\Gamma}}^{I_{2}}\psi\big)\bchar_{\cext}\big\|_{L_{x}^{2}}.
    \end{aligned}
    \end{equation*}
    Note that, from~\eqref{est:Bootpsi},~\eqref{est:Bootv} and~\eqref{est:pointGlobal}, we have
    \begin{equation*}
    \begin{aligned}
    \mathcal{I}_{61}&\lesssim \sum_{|I_{1}|\le N-3}\sum_{|I_{2}|\le N-4}\big\|\Gamma^{I_{1}}v\big\|_{L_{x}^{2}}\big\|\widehat{{\Gamma}}^{I_{2}}\psi\big\|_{L_{x}^{\infty}}\lesssim C^{2}\varepsilon^{2},\\
    \mathcal{I}_{62}&\lesssim \sum_{|I_{1}|\le N-6}\sum_{|I_{2}|\le N-3}\big\|\langle t+r\rangle\big(\Gamma^{I_{1}}v\big)\|_{L_{x}^{\infty}}\big\|\big(\widehat{{\Gamma}}^{I_{2}}\psi\big)\big\|_{L_{x}^{2}}\lesssim C^{2}\varepsilon^{2}.
    \end{aligned}
    \end{equation*}
    Then, using~\eqref{est:Bootpsi},~\eqref{est:Bootv},~\eqref{est:intvhigh} and~\eqref{est:outpsiv},
    \begin{equation*}
    \begin{aligned}
    \mathcal{I}_{63}
    &\lesssim \sum_{\substack{|I_{1}|\le N\\ |I_{2}|\le N-7}}\|\partial \Gamma^{I_{1}}v\|_{L_{x}^{2}}\big\|\langle t-r\rangle \big(\widehat{{\Gamma}}^{I_{2}}\psi\big)\big\|_{L_{x}^{\infty}}\\
    &+\sum_{\substack{|I_{1}|\le N-1\\ |I_{2}|\le N-7}}\langle t\rangle^{\frac{1}{2}}\left\|\Gamma^{I_{1}}\psi\right\|_{L_{x}^{2}}\big\|\widehat{{\Gamma}}^{I_{2}}\psi\big\|_{L_{x}^{\infty}}\lesssim C^{2}\varepsilon^{2},
    \end{aligned}
    \end{equation*}
    \begin{equation*}
    \mathcal{I}_{64}\lesssim \sum_{\substack{|I_{1}|\le N-3\\ |I_{2}|\le N-7}}\left\|\langle r-t\rangle\chi(r-2t)\Gamma^{I_{1}}v\right\|_{L_{x}^{2}}\big\|\widehat{{\Gamma}}^{I_{2}}\psi\big\|_{L_{x}^{\infty}}\lesssim C^{2}\varepsilon^{2}.
    \end{equation*}
    We see that~\eqref{est:StildeS} follows from above estimates. 
    
    Note that, combining~\eqref{est:prelestG2} and~\eqref{est:StildeS}, we have 
    \begin{equation}\label{est:prelestG22}
    \begin{aligned}
    &\sum_{|I|\le N-4}\int_{0}^{t}\left\|\langle s+r\rangle\Gamma^{I}\mathcal{N}_{2}(\psi,\psi^{*})\right\|_{L_{x}^{2}}\d s\\
    &\lesssim C^{2}\varepsilon^{2}\bigg(\log (2+t)+\sum_{|I|\le N-4}\int_{0}^{t}\langle s\rangle^{-1}\left(\big\|S\Gamma^{I}\widetilde{\Psi}\big\|_{L_{x}^{2}}+\big\|\Gamma\Gamma^{I}\widetilde{\Psi}\big\|_{L_{x}^{2}}\right)\d s\bigg).
    \end{aligned}
    \end{equation}

	To control $\big\|S\Gamma^{I}\widetilde{\Psi}\big\|_{L_{x}^{2}}$ and $\big\|\Gamma\Gamma^{I}\widetilde{\Psi}\big\|_{L_{x}^{2}}$ that appear in~\eqref{est:prelestG22}, we introduce the following two quantities: for $n=1,2,\cdots,N-3$, we set 
		\begin{equation}\label{def:WH}
	\begin{aligned}
	\mathcal{H}_{n}\big(t,\widetilde{\Psi}\big)&=\sum_{|I|=n-1}\left(\mathcal{F}\big(t,\Gamma^{I}\widetilde{\Psi}\big)+\mathcal{E}_{0}\big(t,\Gamma^{I}\widetilde{\Psi}\big)\right),\\
	\mathcal{W}\big(t,\widetilde{\Psi}\big)&=\sum_{n=0}^{N-3}2^{-n}\mathcal{H}_{n}\big(t,\widetilde{\Psi}\big),\quad \mbox{where}\ \ \mathcal{H}_{0}\big(t,\widetilde{\Psi}\big)=\big\|\widetilde{\Psi}(t,x)\big\|_{L_{x}^{2}}^{2}.
	\end{aligned}
	\end{equation}
	We start with the estimate of $\mathcal{H}_{0}\big(t,\widetilde{\Psi}\big)^{\frac{1}{2}}$  (i.e. $\big\|\widetilde{\Psi}\big\|_{L_{x}^{2}}$), and then use Gronwall's inequality~\eqref{est:Gron} to obtain the estimates for $\mathcal{W}\big(t,\widetilde{\Psi}\big)^{\frac{1}{2}}$.

	{\textbf{Step 3.}} Control of $\|\widetilde{\Psi}\|_{L_{x}^{2}}$. We claim that 
	\begin{equation}\label{est:PsiL2}
	\|\widetilde{\Psi}\|_{L_{x}^{2}}\lesssim \varepsilon\log^{\frac{3}{2}}(2+t),\quad \mbox{for all}\ t\in [0,T^{*}(\psi_{0},\vec{v}_{0})).
	\end{equation}
	Indeed, from~\eqref{equ:defPsi} and the smallness condition~\eqref{est:smallness}, we see that 
	\begin{equation*}
	\big\|\widetilde{\Psi}(0,x)\big\|_{L_{x}^{2}}+\big\|\pt\widetilde{\Psi}(0,x)\big\|_{L_{x}^{1}}+\big\|\pt\widetilde{\Psi}(0,x)\big\|_{L_{x}^{2}}\lesssim \varepsilon.
	\end{equation*}
	Then, from~\eqref{est:Bootpsi} and~\eqref{est:Bootv}, we get
	\begin{equation*}
	\begin{aligned}
	&\int_{0}^{t}\left(\|\mathcal{N}_{1}\|_{L_{x}^{1}}+\|\mathcal{N}_{1}\|_{L_{x}^{2}} +\|\mathcal{N}_{2}\|_{L_{x}^{1}}+\|\mathcal{N}_{2}\|_{L_{x}^{2}}+\|Q_{0}(\psi,v)\|_{L_{x}^{2}}\right)\d s\\
	&\lesssim \int_{0}^{t}\left(\|v\|_{L_{x}^{\infty}}+\|\partial v\|_{L_{x}^{\infty}}\right)\left(\|v\|_{L_{x}^{2}}+\| v\|_{L_{x}^{\infty}}+1\right)\left(\|\psi\|_{L_{x}^{2}}+\|\partial \psi\|_{L_{x}^{2}}\right)\d s\\
	&+\int_{0}^{t}\left\|[\psi]_{-}\right\|_{L_{x}^{\infty}}\left(\|\psi\|_{L_{x}^{2}}+\|\psi\|_{L_{x}^{\infty}}\right)\|\psi\|_{L_{x}^{2}}\d s\lesssim C^{2}\varepsilon^{2}\int_{0}^{t}\langle s\rangle^{-1}\d s \lesssim C^{2}\varepsilon^{2}\log (2+t).
	\end{aligned}
	\end{equation*}
	Then, from~\eqref{est:hatGa},~\eqref{est:Q0inside},~\eqref{est:Bootpsi},~\eqref{est:Bootv},~\eqref{est:intpsilow},~\eqref{est:intvlow},~\eqref{est:outpsiv} and $\d x=r\d r$, we have 
	\begin{equation*}
	\begin{aligned}
	&\int_{0}^{t}\left\|Q_{0}(\psi,v)\bchar_{\cint}\right\|_{L_{x}^{1}}\d s \\
	&\lesssim 
	C^{2}\varepsilon^{2}\int_{0}^{t}\langle s\rangle^{-1}\left\|\frac{\langle s-r\rangle^{-1}}{\langle s+r\rangle^{\frac{1}{2}-\delta}}\left(\frac{\langle s-r\rangle}{\langle s+r\rangle^{2}}+\langle s+r\rangle^{-2+\delta}\right)\right\|_{L_{x}^{1}}\d s\\
	&+\int_{0}^{t}\langle s\rangle^{-1}\big(\big\|\widehat{\Gamma}\psi\big\|_{L_{x}^{2}}+\left\|\psi\right\|_{L_{x}^{2}}\big)\|{\Gamma}v\|_{L_{x}^{2}}\d s\lesssim C^{2}\varepsilon^{2}\log (2+t),
	\end{aligned}
	\end{equation*}
	\begin{equation*}
	\begin{aligned}
	\int_{0}^{t}\left\|Q_{0}(\psi,v)\bchar_{\cext}\right\|_{L_{x}^{1}}\d s
	&\lesssim \int_{0}^{t}\langle s\rangle^{-1}\left\|\langle r-s\rangle\chi(r-2s)(\partial v)\right\|_{L_{x}^{2}}\left\|\partial \psi\right\|_{L_{x}^{2}}\d s\\
	&\lesssim C^{2}\varepsilon^{2}\int_{0}^{t}\langle s\rangle^{-1}\d s \lesssim C^{2}\varepsilon^{2}\log (2+t).
	\end{aligned}
	\end{equation*}
	It follows immediately that
	\begin{equation*}
	\begin{aligned}
	\int_{0}^{t}\left\|Q_{0}(\psi,v)\right\|_{L_{x}^{1}}\d s
	&\lesssim \int_{0}^{t}\left\|Q_{0}(\psi,v)\bchar_{\cint}\right\|_{L_{x}^{1}}\d s\\
	&+\int_{0}^{t}\left\|Q_{0}(\psi,v)\bchar_{\cext}\right\|_{L_{x}^{1}}\d s
	\lesssim C^{2}\varepsilon^{2}\log (2+t).
	\end{aligned}
	\end{equation*}
	Combining the above inequalities with~\eqref{est:Lx2}, for $\varepsilon$ small enough, we get
	\begin{equation*}
	\begin{aligned}
\big\|\widetilde{\Psi}(t,x)\big\|_{L_{x}^{2}}&\lesssim \big\|\widetilde{\Psi}(0,x)\big\|_{L_{x}^{2}}+\log^{\frac{1}{2}}(2+t)\left(\big\|\pt\widetilde{\Psi}(0,x)\big\|_{L_{x}^{1}}+\big\|\pt\widetilde{\Psi}(0,x)\big\|_{L_{x}^{2}}\right)\\
&+\log^{\frac{1}{2}} (2+t)\int_{0}^{t}\left(\|\mathcal{N}_{1}\|_{L_{x}^{1}}+\|\mathcal{N}_{1}\|_{L_{x}^{2}}+\|\mathcal{N}_{2}\|_{L_{x}^{1}}+\|\mathcal{N}_{2}\|_{L_{x}^{2}}\right)\d s\\
&+\log^{\frac{1}{2}}(2+t)\int_{0}^{t}\left(\|Q_{0}(\psi,v)\|_{L_{x}^{1}}+\|Q_{0}(\psi,v)\|_{L_{x}^{2}}\right)\d s\lesssim 
\varepsilon\log^{\frac{3}{2}} (2+t),
\end{aligned}
	\end{equation*}
	which means~\eqref{est:PsiL2}.
	
	{\textbf{Step 4.}} Control of $\mathcal{W}(t,\widetilde{\Psi})^{\frac{1}{2}}$. 
	On the one hand, we notice that, for all $n=1,\cdots,N-3$, 
	\begin{equation*}
	\begin{aligned}
	\mathcal{H}_{n}(t,\widetilde{\Psi})
	&=\sum_{|I|=n-1}\|S\Gamma^{I}\widetilde{\Psi}+\Gamma^{I}\widetilde{\Psi}\|^{2}_{L_{x}^{2}}+\sum_{|I|=n-1}\|\partial \Gamma^{I}\widetilde{\Psi}\|^{2}_{L_{x}^{2}}\\
	&+\sum_{|I|=n-1}\left(\big\|\Omega \Gamma^{I}\widetilde{\Psi}\big\|^{2}_{L_{x}^{2}}+\big\|L_{1}\Gamma^{I}\widetilde{\Psi}\big\|^{2}_{L_{x}^{2}}+\big\|L_{2}\Gamma^{I}\widetilde{\Psi}\big\|^{2}_{L_{x}^{2}}\right).
	\end{aligned}
	\end{equation*}
	Therefore, from the basic inequality $(a+b)^{2}\ge \frac{1}{2}a^{2}-b^{2}$, we obtain,
	\begin{equation*}
	\mathcal{H}_{n}(t,\widetilde{\Psi})\ge \frac{1}{2}\sum_{|I|= n-1}\|S\Gamma^{I}\widetilde{\Psi}\|^{2}_{L_{x}^{2}}+\sum_{|I|=n}\big\|\Gamma^{I}\widetilde{\Psi}\big\|^{2}_{L_{x}^{2}}-\sum_{|I|=n-1}\big\|\Gamma^{I}\widetilde{\Psi}\big\|^{2}_{L_{x}^{2}},
	\end{equation*}
	for $n=1,\cdots,N-3$. Summing over $n$ and using the fact that $\mathcal{H}_{0}(t,\widetilde{\Psi})=\big\|\widetilde{\Psi}\big\|_{L_{x}^{2}}^{2}$, we find
	\begin{equation*}
	\mathcal{W}(t,\widetilde{\Psi})\ge 2^{-N}\bigg(\sum_{|I|\le N-4}\|S\Gamma^{I}\widetilde{\Psi}\|^{2}_{L_{x}^{2}}+\sum_{|I|\le N-3}\|\Gamma^{I}\widetilde{\Psi}\|^{2}_{L_{x}^{2}}\bigg),
	\end{equation*}
	which implies 
	\begin{equation}\label{est:Wlowerbound}
	\sum_{|I|\le N-4}\left(\|S\Gamma^{I}\widetilde{\Psi}\|_{L_{x}^{2}}+\|\Gamma\Gamma^{I}\widetilde{\Psi}\|_{L_{x}^{2}}\right)\lesssim \mathcal{W}(t,\widetilde{\Psi})^{\frac{1}{2}}.
	\end{equation}
	
	On the other hand, from~\eqref{est:smallness},~\eqref{est:Energywave},~\eqref{est:Con},~\eqref{equ:HiddenPsi},~\eqref{est:weightLtLxQ0},~\eqref{est:weightLtLxN1I},~\eqref{est:prelestG22} and~\eqref{est:Wlowerbound}, for $\varepsilon$ small enough (depending on $C$), we see that 
	\begin{equation*}
	\begin{aligned}
	&\sum_{|I|\le N-4}\mathcal{F}(t,\Gamma^{I}\widetilde{\Psi})^{\frac{1}{2}}+\sum_{|I|\le N-4}\mathcal{E}_{0}(t,\Gamma^{I}\widetilde{\Psi})^{\frac{1}{2}}\\
	&\lesssim \sum_{|I|\le N-4}\left(\mathcal{F}(0,\Gamma^{I}\widetilde{\Psi})^{\frac{1}{2}}+\mathcal{E}_{0}(t,\Gamma^{I}\widetilde{\Psi})^{\frac{1}{2}}\right)+\sum_{|I|\le N-4}\int_{0}^{t}\left\|\langle s+r\rangle \Gamma^{I}Q_{0}(\psi,v)\right\|_{L_{x}^{2}}\d s \\
	&+\sum_{|I|\le N-4}\int_{0}^{t}\left(\left\|\langle s+r\rangle \Gamma^{I}\mathcal{N}_{1}(\psi,v)\right\|_{L_{x}^{2}}+\left\|\langle s+r\rangle \Gamma^{I}\mathcal{N}_{2}(\psi,\psi^{*})\right\|_{L_{x}^{2}}\right)\d s\\
	&\lesssim \varepsilon+C^{2}\varepsilon^{2}\log (2+t)+C^{2}\varepsilon^{2}\int_{0}^{t}\langle s\rangle^{-1}\mathcal{W}(s,\widetilde{\Psi})^{\frac{1}{2}}\d s.
	\end{aligned}
	\end{equation*}
	 Combining the above inequality with~\eqref{est:PsiL2}, we have 
	\begin{equation*}
	\begin{aligned}
	\mathcal{W}(t,\widetilde{\Psi})^{\frac{1}{2}}
	&\lesssim \sum_{|I|\le N-4}\mathcal{F}(t,\Gamma^{I}\widetilde{\Psi})^{\frac{1}{2}}+\sum_{|I|\le N-4}\mathcal{E}_{0}(t,\Gamma^{I}\widetilde{\Psi})^{\frac{1}{2}}+\big\|\widetilde{\Psi}\|_{L_{x}^{2}}\\
	&\lesssim  \varepsilon\log^{\frac{3}{2}} (2+t)+C^{2}\varepsilon^{2}\int_{0}^{t}\langle s\rangle^{-1}\mathcal{W}(s,\widetilde{\Psi})^{\frac{1}{2}}\d s,
	\end{aligned}
	\end{equation*}
	which implies
	\begin{equation*}
	\mathcal{W}(t,\widetilde{\Psi})^{\frac{1}{2}}\le \varepsilon^{\frac{1}{2}}\log^{\frac{3}{2}}(2+t)+\varepsilon^{\frac{1}{2}}\int_{0}^{t}\langle s\rangle^{-1}\mathcal{W}(s,\widetilde{\Psi})^{\frac{1}{2}}\d s,
	\end{equation*}
	for $\varepsilon$ small enough (depending on $C$).
	Based on above estimates and the Gronwall's inequality~\eqref{est:Gron}, we obtain
	\begin{equation}\label{est:W}
	\begin{aligned}
	\mathcal{W}(t,\widetilde{\Psi})^{\frac{1}{2}}
	&\le \varepsilon^{\frac{1}{2}}\log^{\frac{3}{2}} (2+t)\left(1+\int_{0}^{t}\left(\varepsilon^{\frac{1}{2}}\langle s\rangle^{-1}e^{\varepsilon^{\frac{1}{2}}\int_{s}^{t}\langle \tau\rangle^{-1}\d \tau}\right)\d s\right)\\
	&\le \varepsilon^{\frac{1}{2}}\log^{\frac{3}{2}} (2+t)e^{\varepsilon^{\frac{1}{2}}\int_{0}^{t}\langle s\rangle^{-1}\d s}\lesssim \varepsilon^{\frac{1}{2}}\langle t\rangle^{\delta},\quad \mbox{for}\ 0<\varepsilon^{\frac{1}{2}}<\frac{\delta}{2}.
	\end{aligned}
	\end{equation}
	
	{\textbf{Step 5.}} Conclusion. Combining~\eqref{est:prelestG22},~\eqref{est:Wlowerbound} and~\eqref{est:W}, we conclude that
	\begin{equation*}
	\begin{aligned}
	&\sum_{|I|\le N-4}\int_{0}^{t}\left\|\langle s+r\rangle\Gamma^{I}\mathcal{N}_{2}(\psi,\psi^{*})\right\|_{L_{x}^{2}}\d s\\
	&\lesssim C^{2}\varepsilon^{2}\log (2+t)+C^{2}\varepsilon^{2}\int_{0}^{t}\langle s\rangle^{-1}\mathcal{W}(s,\Psi)^{\frac{1}{2}}\d s  \lesssim C^{2}\varepsilon^{2}\langle t\rangle^{\delta}.
	\end{aligned}
	\end{equation*}
	The proof of~\eqref{est:weightLtLxN2I} is complete.
	\end{proof}

As a consequence of the Lemmas~\ref{le:GlobalSobolev},~\ref{le:l2conform},~\ref{le:estQ0} and~\ref{le:estN1N2}, we also have the following $L_{x}^{2}$ and pointwise estimates for $S\Gamma^{I}\Psi$ and $\Gamma\Gamma^{I}\Psi$.
\begin{corollary}
	For any $t\in [0,T^{*}(\psi_{0},\vec{v}_{0}))$, the following estimates are true.
	\begin{enumerate}
		\item {\rm{$L_{x}^{2}$ estimates for $S\Gamma^{I}\Psi$ and $\Gamma\Gamma^{I}\Psi$}}. It holds
		\begin{equation}\label{est:L2SG}
		\sum_{|I|\le N-4}\left(\|S\Gamma^{I}\Psi\|_{L_{x}^{2}}+\|\Gamma\Gamma^{I}\Psi\|_{L_{x}^{2}}\right)\lesssim \varepsilon \langle t\rangle^{\delta}.
		\end{equation}
		
		\item {\rm{Pointwise estimates for $S\Gamma^{I}\Psi$ and $\Gamma\Gamma^{I}\Psi$}.} It holds
		\begin{equation}\label{est:pointwiseSG}
		\sum_{|I|\le N-7}\left(|S\Gamma^{I}\Psi(t,x)|+|\Gamma\Gamma^{I}\Psi(t,x)|\right)\lesssim \varepsilon \langle t+r\rangle^{-\frac{1}{2}+\delta}.
		\end{equation}
	\end{enumerate}
\end{corollary}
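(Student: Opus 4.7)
The plan is to deduce both claims from material already assembled inside the proof of Lemma~\ref{le:estN1N2}, together with the global Sobolev inequality \eqref{est:GloSob}.

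For part (i), I would simply concatenate the three estimates \eqref{est:StildeS}, \eqref{est:Wlowerbound}, and \eqref{est:W} established in Step~4 of that proof. Specifically, \eqref{est:StildeS} reduces the bound for $S\Gamma^I\Psi$ and $\Gamma\Gamma^I\Psi$ to the corresponding quantities for $\widetilde{\Psi}$, up to an admissible cubic remainder of size $C^2\varepsilon^2$; \eqref{est:Wlowerbound} dominates those quantities by $\mathcal{W}(t,\widetilde{\Psi})^{1/2}$; and \eqref{est:W} shows $\mathcal{W}(t,\widetilde{\Psi})^{1/2}\lesssim \varepsilon\langle t\rangle^\delta$. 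Chaining the three inequalities, together with $C^2\varepsilon\ll 1$, gives the claim.

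For part (ii), I plan to apply the global Sobolev inequality \eqref{est:GloSob} to each of $S\Gamma^I\Psi$ and $\Gamma\Gamma^I\Psi$ with $|I|\le N-7$. For $\Gamma\Gamma^I\Psi$ the reduction is immediate, since each $\Gamma^J\Gamma\Gamma^I\Psi$ with $|J|\le 3$ has the form $\Gamma\Gamma^M\Psi$ with $|M|\le N-4$, which is exactly the class handled by part (i). For $S\Gamma^I\Psi$ I first commute $S$ through $\Gamma^J$ via \eqref{est:comm}, writing $\Gamma^J S\Gamma^I\Psi = S\Gamma^{J+I}\Psi + [\Gamma^J,S]\Gamma^I\Psi$; the main term lies in the range of part (i) because $|J+I|\le N-4$, while \eqref{est:comm} bounds the commutator pointwise by $\sum_{|K|<|J|}|\partial\Gamma^K\Gamma^I\Psi|$, every summand of which is of the form $|\Gamma\Gamma^L\Psi|$ for some $|L|\le N-5$, again controlled by part (i). Multiplying by the $\langle t+r\rangle^{-1/2}$ prefactor of \eqref{est:GloSob} yields the pointwise $\varepsilon\langle t+r\rangle^{-1/2+\delta}$ bound.

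The genuinely nontrivial step has already been surmounted in Step~4 of Lemma~\ref{le:estN1N2}, namely the $\varepsilon\langle t\rangle^\delta$ bound on $\mathcal{W}(t,\widetilde{\Psi})^{1/2}$. That step relies on the $L^1$--$L^2$ free-wave estimate of Lemma~\ref{le:Lx2wave} (which produces the unavoidable logarithmic factor seen in \eqref{est:PsiL2}), coupled with the conformal energy inequality \eqref{est:Con} and Gronwall's inequality \eqref{est:Gron}. With those preparations granted, the corollary itself is essentially a bookkeeping step and I would not expect any new obstacle beyond keeping track of the number of vector fields used.
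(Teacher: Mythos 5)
Your plan for part (ii) is sound: the commutation bookkeeping via \eqref{est:comm} is the right way to pass the global Sobolev inequality \eqref{est:GloSob} through the scaling vector field, and the derivative counts work out (every term falls into the class $S\Gamma^J\Psi$ or $\Gamma\Gamma^M\Psi$ with $|J|,|M|\le N-4$, which is exactly what part (i) covers).

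For part (i), however, there is a genuine gap traceable to a misquoted estimate. You cite \eqref{est:W} as giving $\mathcal{W}(t,\widetilde{\Psi})^{1/2}\lesssim \varepsilon\langle t\rangle^\delta$, but it actually reads $\mathcal{W}(t,\widetilde{\Psi})^{1/2}\lesssim \varepsilon^{1/2}\langle t\rangle^\delta$: the factor $\varepsilon^{1/2}$ is an artefact of the Gronwall step, in which both the data size and the coefficient $C^2\varepsilon^2$ in the integral inequality were absorbed into $\varepsilon^{1/2}$ so that the resulting exponential factor $\langle t\rangle^{\varepsilon^{1/2}}$ remains $\lesssim\langle t\rangle^\delta$. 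Chaining \eqref{est:StildeS}, \eqref{est:Wlowerbound}, and \eqref{est:W} therefore gives only $\sum_{|I|\le N-4}\bigl(\|S\Gamma^I\Psi\|_{L_x^2}+\|\Gamma\Gamma^I\Psi\|_{L_x^2}\bigr)\lesssim C^2\varepsilon^2+\varepsilon^{1/2}\langle t\rangle^\delta\lesssim\varepsilon^{1/2}\langle t\rangle^\delta$, which is strictly weaker than the claimed $\varepsilon\langle t\rangle^\delta$. The $\varepsilon$-power cannot be sacrificed here: \eqref{est:pointwiseSG} feeds into Step 5 of the bootstrap closure, where the improved pointwise bound for $\widehat{\Gamma}^I\psi$ must emerge with an implied constant proportional to $\varepsilon$, not $\varepsilon^{1/2}$; since $\varepsilon^{1/2}\gg C\varepsilon$ for small $\varepsilon$, a bound of size $\varepsilon^{1/2}$ would not improve on the bootstrap hypothesis $C\varepsilon$ and the whole scheme would not close.

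The fix, which is what the paper actually does, is to drop \eqref{est:W} at this stage. The Gronwall argument producing \eqref{est:W} is precisely the internal bootstrap needed to prove \eqref{est:weightLtLxN2I}; once that lemma is available as a black box, one reruns the energy and conformal energy estimates \eqref{est:Energywave} and \eqref{est:Con} on $\Gamma^I\widetilde{\Psi}$, inserting \eqref{est:weightLtLxQ0}, \eqref{est:weightLtLxN1I}, and \eqref{est:weightLtLxN2I} directly on the right-hand side. This yields $\sum_{|I|\le N-4}\bigl(\mathcal{F}(t,\Gamma^I\widetilde{\Psi})^{1/2}+\mathcal{E}_0(t,\Gamma^I\widetilde{\Psi})^{1/2}\bigr)\lesssim\varepsilon+C^2\varepsilon^2\langle t\rangle^\delta\lesssim\varepsilon\langle t\rangle^\delta$ for $\varepsilon$ small depending on $C$, since there is now no Gronwall loop to cause loss. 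Combined with \eqref{est:PsiL2}, which gives $\|\widetilde{\Psi}\|_{L_x^2}\lesssim\varepsilon\log^{3/2}(2+t)\lesssim\varepsilon\langle t\rangle^\delta$, and then with \eqref{est:Wlowerbound} and \eqref{est:StildeS}, one obtains the desired $\varepsilon\langle t\rangle^\delta$ without the $\varepsilon^{1/2}$ deficit.
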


\begin{proof}
	Proof of (i). Recall that, for $n=1,\dots,N-3$, we set 
	\begin{equation*}
	\begin{aligned}
	\mathcal{H}_{n}\big(t,\widetilde{\Psi}\big)&=\sum_{|I|=n-1}\left(\mathcal{F}\big(t,\Gamma^{I}\widetilde{\Psi}\big)+\mathcal{E}_{0}\big(t,\Gamma^{I}\widetilde{\Psi}\big)\right),\\
	\mathcal{W}\big(t,\widetilde{\Psi}\big)&=\sum_{n=0}^{N-3}2^{-n}\mathcal{H}_{n}\big(t,\widetilde{\Psi}\big),\quad \mbox{where}\ \ \mathcal{H}_{0}\big(t,\widetilde{\Psi}\big)=\big\|\widetilde{\Psi}(t,x)\big\|_{L_{x}^{2}}^{2}.
	\end{aligned}
	\end{equation*}
	Recall also that, in the Step 2, 3 and 4 of the proof of Lemma~\ref{le:estN1N2} (iii), we have shown that
	\begin{equation*}
	\begin{aligned}
	\big\|\mathcal{H}_{0}(t,\widetilde{\Psi})\big\|_{L_{x}^{2}}=\big\|\widetilde{\Psi}(t,x)\big\|_{L_{x}^{2}}&\lesssim \varepsilon \log^{\frac{3}{2}}(2+t),\\
	\sum_{|I|\le N-4}\big(\big\|S\Gamma^{I}\widetilde{\Psi}\big\|_{L_{x}^{2}}+\big\|\Gamma\Gamma^{I}\widetilde{\Psi}\big\|_{L_{x}^{2}}\big)&\lesssim \mathcal{W}(t,\widetilde{\Psi})^{\frac{1}{2}},
	\end{aligned}
	\end{equation*}
	as well as 
	\begin{equation*}
	\begin{aligned}
	\sum_{|I|\le N-4}\left(\big\|S\Gamma^{I}{\Psi}\big\|_{L_{x}^{2}}+\big\|\Gamma\Gamma^{I}{\Psi}\big\|_{L_{x}^{2}}\right)\lesssim C^{2}\varepsilon^{2}+\sum_{|I|\le N-4}\big(\big\|S\Gamma^{I}\widetilde{\Psi}\big\|_{L_{x}^{2}}+\big\|\Gamma\Gamma^{I}\widetilde{\Psi}\big\|_{L_{x}^{2}}\big).
	\end{aligned}
	\end{equation*}
	Note that, from~\eqref{est:smallness},~\eqref{est:Energywave},~\eqref{est:Con},~Lemma~\ref{le:estQ0} and Lemma~\ref{le:estN1N2}, we get
	\begin{equation*}
	\begin{aligned}
&\sum_{|I|\le N-4}\mathcal{F}\big(t,\Gamma^{I}\widetilde{\Psi}\big)^{\frac{1}{2}}+\sum_{|I|\le N-4}\mathcal{E}_{0}\big(t,\Gamma^{I}\widetilde{\Psi}\big)^{\frac{1}{2}}\\
&\lesssim 	\sum_{|I|\le N-4}\left(\mathcal{F}\big(0,\Gamma^{I}\widetilde{\Psi}\big)^{\frac{1}{2}}+\mathcal{E}_{0}\big(0,\Gamma^{I}\widetilde{\Psi}\big)^{\frac{1}{2}}\right)+\sum_{|I|\le N-4}\int_{0}^{t}\left\|\langle s+r\rangle\Gamma^{I}Q_{0}(\psi,v)\right\|_{L_{x}^{2}}\d s \\
&+\sum_{|I|\le N-4}\int_{0}^{t}\left(\left\|\langle s+r\rangle\Gamma^{I}\mathcal{N}_{1}(\psi,v)\right\|_{L_{x}^{2}}+\left\|\langle s+r\rangle\Gamma^{I}\mathcal{N}_{2}(\psi,\psi^{*})\right\|_{L_{x}^{2}}\right)\d s\\
&\lesssim \varepsilon+C^{2}\varepsilon^{2}\log (2+t)+C^{2}\varepsilon^{2}\langle t\rangle^{\delta}\lesssim \varepsilon\langle t\rangle^{\delta},
\end{aligned}
	\end{equation*}
	for $\varepsilon$ small enough (depending on $C$). Gathering above estimates, we obtain~\eqref{est:L2SG}.
	
	Proof of (ii). The inequality~\eqref{est:pointwiseSG} is a direct consequence of~\eqref{est:GloSob} and~\eqref{est:L2SG}.
	\end{proof}

Next, we introduce the weighted $L^{2}_{x}$ and $L_{t}^{1}L_{x}^{2}$ estimates for $\Gamma^{I}\mathcal{N}_{3}$ and $\Gamma^{I}\mathcal{N}_{4}$ (see \eqref{equ:Ns00} and \eqref{equ:Ns01} for the expressions of $\mathcal{N}_3$ and $\mathcal{N}_4$).

\begin{lemma}
	For any $t\in [0,T^{*}(\psi_{0},\vec{v}_{0}))$, the following estimates are true.
	\begin{enumerate}
	\item {\rm{Weighted $L_{x}^{2}$ estimates for $\Gamma^{I}\mathcal{N}_{3}$ and $\Gamma^{I}\mathcal{N}_{4}$.}} It holds
	\begin{equation}\label{est:N3N4}
	\begin{aligned}
      \sum_{|I|\le N-2}\left\|\langle t+r\rangle \Gamma^{I}\mathcal{N}_{3}(\psi,v)\right\|_{L_{x}^{2}}&\lesssim C^{3}\varepsilon^{3}\langle t\rangle^{-\frac{1}{2}},\\
      \sum_{|I|\le N-2}\left\|\langle t+r\rangle \Gamma^{I}\mathcal{N}_{4}(\psi,\psi^{*})\right\|_{L_{x}^{2}}&\lesssim 
      C\varepsilon^{2}\langle t\rangle^{-\frac{1}{2}+\delta}.
      \end{aligned}
	\end{equation}
	
	\item {\rm{$L_{t}^{1}L_{x}^{2}$ estimates for $\Gamma^{I}\mathcal{N}_{3}$ and $\Gamma^{I}\mathcal{N}_{4}$.}} It holds
	\begin{equation}\label{est:LtLxN3N4}
	\sum_{|I|\le N-1}\int_{0}^{t}\left(\left\|\Gamma^{I}\mathcal{N}_{3}(\psi,v)\right\|_{L_{x}^{2}}+\left\|\Gamma^{I}\mathcal{N}_{4}(\psi,\psi^{*})\right\|_{L_{x}^{2}}\right)\d s\lesssim C^{2}\varepsilon^{2}.
	\end{equation}
	
\end{enumerate}
\end{lemma}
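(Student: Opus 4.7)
The plan is to prove both parts by Leibniz-expanding $\Gamma^{I}\mathcal{N}_{3}$ and $\Gamma^{I}\mathcal{N}_{4}$, doing a case analysis according to which factor carries the top-order derivatives, and then invoking the bootstrap pointwise and $L^{2}$ bounds~\eqref{est:Bootpsi}--\eqref{est:Bootv} together with the interior/exterior refinements from Lemma~\ref{le:point}.

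First I would handle $\mathcal{N}_{3}$. A direct expansion (noting that the $\partial_{\mu}v$ pieces between the two brackets cancel) shows $\mathcal{N}_{3}$ is schematically of the form $v\cdot\partial\psi\cdot\psi$; the Leibniz rule then writes $\Gamma^{I}\mathcal{N}_{3}$ as a sum of products $\Gamma^{I_{0}}v\cdot\partial\widehat{\Gamma}^{I_{1}}\psi\cdot\widehat{\Gamma}^{I_{2}}\psi$ with $|I_{0}|+|I_{1}|+|I_{2}|\le|I|\le N-2$. Since at most one of the three indices can exceed $(N-2)/2$, I split into two regimes. \emph{(a) Low-order $v$ ($|I_{0}|\le N-6$):} the Klein-Gordon pointwise bound in~\eqref{est:Bootv} gives $\|\langle t+r\rangle\Gamma^{I_{0}}v\|_{L^{\infty}_{x}}\lesssim C\varepsilon$; placing the low-order Dirac factor in $L^{\infty}_{x}$ via~\eqref{est:pointGlobal} and the high-order one in $L^{2}_{x}$ via the Dirac energy yields the target $C^{3}\varepsilon^{3}\langle t\rangle^{-1/2}$. \emph{(b) High-order $v$ ($|I_{0}|\ge N-5$):} both Dirac factors are of order $\le N-8$, but no pointwise bound on $\Gamma^{I_{0}}v$ is available. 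In the interior I would invoke~\eqref{est:intvhigh} to replace $|\Gamma^{I_{0}}v|\bchar_{\cint}$ by $\tfrac{\langle t-r\rangle}{\langle t+r\rangle}\sum_{|J|\le N}|\partial\Gamma^{J}v|+\sum_{|J|\le N-1}(\langle t\rangle^{-3/2+\delta}|\widehat{\Gamma}^{J}\psi|+\langle t\rangle^{-1/2}|[\widehat{\Gamma}^{J}\psi]_{-}|)$. The first piece pairs with the strong interior bound $|\partial\psi||\psi|\bchar_{\cint}\lesssim C^{2}\varepsilon^{2}\langle t-r\rangle^{-3}\langle t+r\rangle^{-1+2\delta}$ (from~\eqref{est:Bootpsi} and~\eqref{est:intpsilow}), producing $C^{3}\varepsilon^{3}\langle t\rangle^{-1+4\delta}$; the third piece combines the $L^{2}_{x}$ Dirac-energy bound on $[\widehat{\Gamma}^{J}\psi]_{-}$ with $\|\partial\psi\cdot\psi\|_{L^{\infty}_{x}}\lesssim C^{2}\varepsilon^{2}\langle t\rangle^{-1}$, producing exactly $C^{3}\varepsilon^{3}\langle t\rangle^{-1/2}$. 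The exterior region is handled with~\eqref{est:outpsiv}.

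Next I would treat $\mathcal{N}_{4}=2\partial_{\alpha}\psi^{*}\gamma^{0}\partial^{\alpha}\psi$ via its null form structure. Applying~\eqref{est:GamQ0} componentwise reduces $\Gamma^{I}\mathcal{N}_{4}$ to a sum of null forms $Q_{0}(\Gamma^{I_{1}}\psi^{*},\Gamma^{I_{2}}\psi)$ sandwiched with $\gamma^{0}$, and multiplying the interior estimate~\eqref{est:Q0inside} by $\langle t+r\rangle$ gives
\begin{equation*}
\langle t+r\rangle|Q_{0}(\Gamma^{I_{1}}\psi^{*},\Gamma^{I_{2}}\psi)|\bchar_{\cint}\lesssim\langle t-r\rangle|\partial\widehat{\Gamma}^{I_{1}}\psi||\partial\widehat{\Gamma}^{I_{2}}\psi|+|\widehat{\Gamma}^{I_{1}+1}\psi||\widehat{\Gamma}^{I_{2}+1}\psi|.
\end{equation*}
Placing the low-order factor in $L^{\infty}_{x}$ (using~\eqref{est:pointGlobal} or~\eqref{est:intpsilow} to harvest the extra $\langle t-r\rangle$-decay) with the high-order factor in $L^{2}_{x}$ yields the claimed $C\varepsilon^{2}\langle t\rangle^{-1/2+\delta}$; the exterior is covered by~\eqref{est:outpsiv} together with~\eqref{est:cutenergyphi}. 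The $L^{1}_{t}L^{2}_{x}$ estimates in part (ii) come from the same case analysis, but without the $\langle s+r\rangle$-weight the resulting product bounds decay as $\langle s\rangle^{-1-\sigma}$ for some $\sigma>0$, so the time integrals are uniformly controlled by $C^{2}\varepsilon^{2}$.

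The main technical obstacle is case (b) above: because the top-order $v$ admits no pointwise decay, any naive estimate grows unboundedly in $t$. The trick is to use~\eqref{est:intvhigh} to trade the missing pointwise $\langle t+r\rangle^{-1}$-decay on $v$ for a factor $\langle t-r\rangle$ against $|\partial\Gamma^{J}v|$ (which cancels the near-light-cone singularity of $|\partial\psi||\psi|$) together with an explicit $\langle t\rangle^{-1/2}$ prefactor sitting in front of $[\widehat{\Gamma}^{J}\psi]_{-}$, which, paired with the Dirac $L^{2}_{x}$-energy bound, produces exactly $\langle t\rangle^{-1/2}$ rather than a growing power of $t$; this is the balance that makes the whole scheme close.
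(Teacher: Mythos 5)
Your plan is broadly sound and overlaps with the paper's case analysis (interior/exterior split, low-order factor in $L^\infty$ against high-order factor in $L^2$), but it deviates from the paper's argument in two places that deserve comment. First, the observation that the $(\partial_{\mu}v)$ contributions to $\mathcal{N}_{3}$ cancel, leaving $\mathcal{N}_{3}=iv\big[(\partial_{\mu}\psi^{*})\gamma^{0}\gamma^{\mu}\psi-\psi^{*}\gamma^{0}\gamma^{\mu}\partial_{\mu}\psi\big]$, is correct and is \emph{not} used in the paper; it caps the number of vector fields hitting $v$ at $N-1$ rather than $N$, which in part (ii) lets you invoke \eqref{est:intvhigh} in the top-order-$v$ regime, whereas the paper has $|I_{1}|\le N$ in its $\mathcal{I}_{75}$ and is forced to go through the ghost-weight spacetime integral $\int_{0}^{t}\langle s\rangle^{-3/4}\|\Gamma^{I_1}v/\langle s-r\rangle\|_{L^{2}_{x}}\d s$ instead. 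Second, for $\mathcal{N}_{4}$ you circumvent the auxiliary variable $\Psi$ entirely: the paper uses the global estimate \eqref{est:GaQ0fg} and then bounds $S\Gamma^{I_{1}}\Psi$, $\Gamma\Gamma^{I_{1}}\Psi$ in $L^{\infty}_{x}$ via the $C$-independent estimate \eqref{est:pointwiseSG}, while you apply \eqref{est:Q0inside} in the interior and close with \eqref{est:intpsilow}, \eqref{est:pointGlobal} on $\psi$, plus \eqref{est:outpsiv} and the exterior bootstrap bound outside. This is genuinely more elementary, but it yields $C^{2}\varepsilon^{2}\langle t\rangle^{-1/2+2\delta}$ rather than the stated $C\varepsilon^{2}\langle t\rangle^{-1/2+\delta}$, because every $\psi$-ingredient you put in $L^{\infty}_{x}$ or $L^{2}_{x}$ carries the bootstrap constant $C$; the single power of $C$ in the statement is precisely what the conformal-energy-based bound on $\Psi$ buys. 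The loss is harmless for closing the bootstrap (Step 5 of the paper already tolerates $\varepsilon+C^{2}\varepsilon^{2}+C^{3}\varepsilon^{3}$), but strictly speaking your route proves a weakened version of the displayed bound for $\mathcal{N}_{4}$, not the bound as stated. Finally, your one-line treatment of part (ii) glosses over the point that without \eqref{est:intvhigh} the top-order-$v$ term produces a non-integrable $\langle s\rangle^{-1}$ integrand; you should make explicit that you are invoking your Case (b) mechanism there as well, which improves the decay to $\langle s\rangle^{-3/2}$.
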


\begin{proof}
	Proof of (i). First, from~\eqref{est:hatGa}, $1\le \bchar_{\cint}+\bchar_{\cext}$ and $N\ge 14$, we see that 
\begin{equation*}
\sum_{|I|\le N-2}\left\|\langle t+r\rangle \Gamma^{I}\mathcal{N}_{3}(\psi,v)\right\|_{L_{x}^{2}}\lesssim \mathcal{I}_{71}+\mathcal{I}_{72}+\mathcal{I}_{73},
\end{equation*}
where
\begin{equation*}
\begin{aligned}
	\mathcal{I}_{71}&=\sum_{|I_{1}|\le N-6}\sum_{|I_{2}|\le N-6}\sum_{|I_{3}|\le N-1}\big\|\langle t+r\rangle\left|\Gamma^{I_{1}}v\right|\big|\widehat{{\Gamma}}^{I_{2}}\psi\big|\big|\widehat{{\Gamma}}^{I_{3}}\psi\big|\big\|_{L_{x}^{2}},\\
\mathcal{I}_{72}&=\sum_{|I_{1}|\le N-1}\sum_{|I_{2}|\le N-7}\sum_{|I_{3}|\le N-4}\big\|\langle t+r\rangle\left|\Gamma^{I_{1}}v\right|\big|\widehat{{\Gamma}}^{I_{2}}\psi\big|\big|\widehat{{\Gamma}}^{I_{3}}\psi\big|\bchar_{\cint}\big\|_{L_{x}^{2}},\\
\mathcal{I}_{73}&=\sum_{|I_{1}|\le N-1}\sum_{|I_{2}|\le N-7}\sum_{|I_{3}|\le N-4}\big\|\langle t+r\rangle\left|\Gamma^{I_{1}}v\right|\big|\widehat{{\Gamma}}^{I_{2}}\psi\big|\big|\widehat{{\Gamma}}^{I_{3}}\psi\big|\bchar_{\cext}\big\|_{L_{x}^{2}}.
\end{aligned}
\end{equation*}
From~\eqref{est:Bootpsi},~\eqref{est:Bootv} and~\eqref{est:pointGlobal}, we check 
\begin{equation*}
\begin{aligned}
\mathcal{I}_{71}
&\lesssim \sum_{\substack{|I_{1}|\le N-6\\ |I_{2}|\le N-6}}\sum_{|I_{3}|\le N-1}\big\|\langle t+r\rangle\Gamma^{I_{1}}v\big\|_{L_{x}^{\infty}}\big\|\widehat{{\Gamma}}^{I_{2}}\psi\big\|_{L_{x}^{\infty}}\big\|\widehat{{\Gamma}}^{I_{3}}\psi\big\|_{L_{x}^{2}}\lesssim C^{3}\varepsilon^{3}\langle t\rangle^{-\frac{1}{2}}.
\end{aligned}
\end{equation*}
Then, using~\eqref{est:Bootpsi},~\eqref{est:Bootv},~\eqref{est:pointGlobal},~\eqref{est:intvhigh},~\eqref{est:outpsiv} and $0<\delta \ll 1$, we have 
\begin{equation*}
\begin{aligned}
\mathcal{I}_{72}
&\lesssim \sum_{|I_{1}|\le N}\sum_{\substack{|I_{2}|\le N-7\\ |I_{3}|\le N-4}}\left\|\partial \Gamma^{I_{1}}v\right\|_{L_{x}^{2}}\big\|\langle t-r\rangle\widehat{{\Gamma}}^{I_{2}}\psi\big\|_{L_{x}^{\infty}}\big\|\widehat{{\Gamma}}^{I_{3}}\psi\big\|_{L_{x}^{\infty}}\\
&+\sum_{|I_{1}|\le N-1}\sum_{\substack{|I_{2}|\le N-7\\ |I_{3}|\le N-4}}\langle t\rangle^{\frac{1}{2}}\big\|\widehat{{\Gamma}}^{I_{1}}\psi\big\|_{L_{x}^{2}}\big\|\widehat{\Gamma}^{I_{2}}\psi \big\|_{L_{x}^{\infty}}\big\|\widehat{\Gamma}^{I_{3}}\psi \big\|_{L_{x}^{\infty}}\lesssim C^{3}\varepsilon^{3}\langle t\rangle^{-\frac{1}{2}},
\end{aligned}
\end{equation*}
\begin{equation*}
\mathcal{I}_{73}\lesssim \sum_{|I_{1}|\le N-1}\sum_{\substack{|I_{2}|\le N-7\\ |I_{3}|\le N-4}}\left\|\Gamma^{I_{1}}v\right\|_{L_{x}^{2}}\big\|\langle t+r\rangle\big|\widehat{{\Gamma}}^{I_{2}}\psi\big|\big|\widehat{{\Gamma}}^{I_{3}}\psi\big|\bchar_{\cext}\big\|_{L_{x}^{\infty}}\lesssim C^{3}\varepsilon^{3}\langle t\rangle^{-\frac{1}{2}}.
\end{equation*}
Gathering these estimates, we have proved~\eqref{est:N3N4} for $\Gamma^{I}\mathcal{N}_{3}$.

Second, we notice that the nonlinear term $\mathcal{N}_{4}$ enjoy a null structure. Hence, from~\eqref{est:GaQ0fg},~\eqref{est:pointwiseSG}, $N\ge 14$ and $[\partial_{\alpha},\Gamma_{i}]\in \rm{Span}\left\{\partial_{t},\partial_{1},\partial_{2}\right\}$, we have 
\begin{equation*}
\begin{aligned}
&\sum_{|I|\le N-2}\left\|\langle t+r\rangle \Gamma^{I}\mathcal{N}_{4}(\psi,\psi^{*})\right\|_{L_{x}^{2}}\\
&\lesssim 
\sum_{\substack{|I_{1}|\le N-8\\ |I_{2}|\le N-1}}\big\|\big(|S\Gamma^{I_{1}}\psi|+|\Gamma\Gamma^{I_{1}}\psi|)\big|\Gamma^{I_{2}}\psi\big|\big\|_{L_{x}^{2}}\\
&\lesssim \sum_{\substack{|I_{1}|\le N-7\\ |I_{2}|\le N-1}}\left(\big\|S\Gamma^{I_{1}}\Psi\big\|_{L_{x}^{\infty}}+\big\|\Gamma\Gamma^{I_{1}}\Psi|\big\|_{L_{x}^{\infty}}\right)\big\|\Gamma^{I_{2}}\psi\big\|_{L_{x}^{2}}\lesssim C\varepsilon^{2}\langle t\rangle^{-\frac{1}{2}+\delta},
\end{aligned}
\end{equation*}
which means~\eqref{est:N3N4} for $\Gamma^{I}\mathcal{N}_{4}$.

Proof of (ii). Using again~\eqref{est:hatGa},~\eqref{est:GaQ0fg} and $N\ge 14$, we see that 
\begin{equation*}
	\sum_{|I|\le N-1}\int_{0}^{t}\left(\left\|\Gamma^{I}\mathcal{N}_{3}(\psi,v)\right\|_{L_{x}^{2}}+\left\|\Gamma^{I}\mathcal{N}_{4}(\psi,\psi^{*})\right\|_{L_{x}^{2}}\right)\d s\lesssim \mathcal{I}_{74}+\mathcal{I}_{75}+\mathcal{I}_{76},
\end{equation*}
where
\begin{equation*}
\begin{aligned}
\mathcal{I}_{74}&=\sum_{|I_{1}|\le N-6}\sum_{|I_{2}|\le N-6}\sum_{|I_{3}|\le N}\int_{0}^{t}\big\|\left|\Gamma^{I_{1}}v\right|\big|\widehat{{\Gamma}}^{I_{2}}\psi\big|\big|\widehat{{\Gamma}}^{I_{3}}\psi\big|\big\|_{L_{x}^{2}}\d s,\\
\mathcal{I}_{75}&=\sum_{|I_{1}|\le N}\sum_{|I_{2}|\le N-7}\sum_{|I_{3}|\le N-7}\int_{0}^{t}\big\|\left|\Gamma^{I_{1}}v\right|\big|\widehat{{\Gamma}}^{I_{2}}\psi\big|\big|\widehat{{\Gamma}}^{I_{3}}\psi\big|\big\|_{L_{x}^{2}}\d s,\\
\mathcal{I}_{76}&=\sum_{|I_{1}|\le N-8}\sum_{|I_{2}|\le N}\int_{0}^{t}\langle s\rangle^{-1}\big\|\big(|S\Gamma^{I_{1}}\psi|+|\Gamma\Gamma^{I_{1}}\psi|)\big|\Gamma^{I_{2}}\psi\big|\big\|_{L_{x}^{2}}\d s.
\end{aligned}
\end{equation*}
Note that, from the bootstrap assumption~\eqref{est:Bootpsi}-\eqref{est:Bootv}, we have 
\begin{equation*}
\begin{aligned}
&\sum_{|I|\le N}\int_{0}^{t}\langle s\rangle^{-\frac{3}{4}}\left\|\frac{\Gamma^{I}v}{\langle s-r\rangle}\right\|_{L_{x}^{2}}\d s\\
&\lesssim \sum_{|I|\le N}\left(\int_{0}^{t}\langle s\rangle^{-\frac{3}{2}+5\delta}\d s \right)^{\frac{1}{2}}\left(\int_{0}^{t}\langle s\rangle^{-5\delta}\left\|\frac{\Gamma^{I}v}{\langle s-r\rangle}\right\|^{2}_{L_{x}^{2}}\d s\right)^{\frac{1}{2}}\lesssim C^{\frac{1}{2}}\varepsilon^{\frac{1}{2}}.
\end{aligned}
\end{equation*}
Therefore, from~\eqref{est:Bootpsi},~\eqref{est:Bootv} and~\eqref{est:pointwiseSG}, we have 
\begin{equation*}
\begin{aligned}
\mathcal{I}_{74}
&\lesssim \sum_{\substack{|I_{1}|\le N-6\\|I_{2}|\le N-6}}\sum_{|I_{3}|\le N}\int_{0}^{t}\left\|\Gamma^{I_{1}}v\right\|_{L_{x}^{\infty}}\big\|\widehat{{\Gamma}}^{I_{2}}\psi\big\|_{L_{x}^{\infty}}\big\|\widehat{{\Gamma}}^{I_{3}}\psi\big\|_{L_{x}^{2}}\d s\\
&\lesssim C^{3}\varepsilon^{3}\int_{0}^{t}\langle s\rangle^{-1}\langle s\rangle^{-\frac{1}{2}}\langle s\rangle^{\delta}\d s \lesssim C^{3}\varepsilon^{3}\int_{0}^{t}\langle s\rangle^{-\frac{3}{2}+\delta}\d s\lesssim C^{3}\varepsilon^{3},
\end{aligned}
\end{equation*}
\begin{equation*}
\begin{aligned}
\mathcal{I}_{75}
&\lesssim \sum_{|I_{1}|\le N}\sum_{\substack{|I_{2}|\le N-7\\ |I_{3}|\le N-7}}\int_{0}^{t}\left\|\frac{\Gamma^{I_{1}}v}{\langle s-r\rangle}\right\|_{L_{x}^{2}}\big\|\langle s-r\rangle\widehat{{\Gamma}}^{I_{2}}\psi\big\|_{L_{x}^{\infty}}\big\|\widehat{{\Gamma}}^{I_{3}}\psi\big\|_{L_{x}^{\infty}}\d s\\
&\lesssim C^{2}\varepsilon^{2} \sum_{|I_{1}|\le N}\int_{0}^{t}\langle s\rangle^{-1+\delta}\left\|\frac{\Gamma^{I_{1}}v}{\langle s-r\rangle}\right\|_{L_{x}^{2}}\d s\lesssim C^{\frac{5}{2}}\varepsilon^{\frac{5}{2}},
\end{aligned}
\end{equation*}
\begin{equation*}
\begin{aligned}
\mathcal{I}_{76}&\lesssim \sum_{\substack{|I_{1}|\le N-7\\ |I_{2}|\le N}}\int_{0}^{t}\langle s\rangle^{-1}\left(\|S\Gamma^{I_{1}}\Psi\|_{L_{x}^{\infty}}+\|\Gamma\Gamma^{I_{1}}\Psi\|_{L_{x}^{\infty}}\right)\big\|\Gamma^{I_{2}}\psi\big\|_{L_{x}^{2}}\d s\\
&\lesssim C\varepsilon^{2}\int_{0}^{t}\langle s\rangle^{-1}\langle s\rangle^{-\frac{1}{2}+\delta}\langle s\rangle^{\delta}\d s \lesssim C\varepsilon^{2}\int_{0}^{t}\langle s\rangle^{-\frac{3}{2}+2\delta}\d s\lesssim C\varepsilon^{2}.
\end{aligned}
\end{equation*}
Gathering these estimates, we have proved~\eqref{est:LtLxN3N4}.

	\end{proof}

Last, we introduce the estimates for highest-order nonlinear terms.
\begin{lemma}\label{le:highest}
	For all $t\in [0,T^{*}(\psi_{0},\vec{v}_{0}))$, the following estimates are true.
	\begin{enumerate}
		\item {\rm{$L_{t}^{1}L_{x}^{1}$ and $L_{t}^{1}L_{x}^{2}$ estimates for highest-order nonlinear terms.}} We have 
		\begin{align}
		\sum_{|I|\le N}\int_{0}^{t}\left\|\Gamma^{I}\left(\psi^{*}\gamma^{0}\psi\right)\right\|_{L_{x}^{2}}\d s&\lesssim C^{2}\varepsilon^{2}\langle t\rangle^{2\delta},\label{est:LtLx2psiN}\\
		\sum_{|I|\le N}\int_{0}^{t}\int_{\R^{2}}\big|\big(\widehat{{\Gamma}}^{I}\psi\big)^{*}\gamma^{0}\widehat{{\Gamma}}^{I}(v\psi)\big|\d x \d s&\lesssim C^{\frac{5}{2}}\varepsilon^{\frac{5}{2}}\langle t\rangle^{2\delta}.\label{est:LtLx1vpsiN}
		\end{align}
		
		\item {\rm{Weighted estimates for highest-order nonlinear terms.}} We have 
	\end{enumerate}
	\begin{align}
		\sum_{|I|\le N}	\int_{0}^{t}\langle s\rangle^{-3\delta}\left\|\Gamma^{I}\left(\psi^{*}\gamma^{0}\psi\right)\right\|_{L_{x}^{2}}\d s&\lesssim C^{\frac{3}{2}}\varepsilon^{\frac{3}{2}},\label{est:GhostpsiN}\\
	\sum_{|I|\le N}\int_{0}^{t}\langle s\rangle^{-3\delta}\int_{\R^{2}}\big|\big(\widehat{{\Gamma}}^{I}\psi\big)^{*}\gamma^{0}\widehat{{\Gamma}}^{I}(v\psi)\big|\d x \d s &\lesssim C^{2}\varepsilon^{2}.\label{est:GhostvN}
	\end{align}
\end{lemma}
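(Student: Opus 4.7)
The four estimates all follow the same scheme: reduce each quantity to an expression where the hidden structure of Lemma~\ref{lem:hidden} forces the appearance of a $[\,\cdot\,]_-$ projection, so as to trigger the ghost-weight bootstrap assumption in \eqref{est:Bootpsi}. More precisely, for \eqref{est:LtLx2psiN} and \eqref{est:GhostpsiN} we apply \eqref{est:hatGG} to dominate $|\Gamma^I(\psi^*\gamma^0\psi)|$ by $\sum_{|I_1|+|I_2|\le|I|}|(\widehat{\Gamma}^{I_1}\psi)^*\gamma^0\widehat{\Gamma}^{I_2}\psi|$, and then invoke \eqref{equ:HiddenPhi} so that every summand contains at least one $[\widehat{\Gamma}^{I_j}\psi]_-$ factor. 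For \eqref{est:LtLx1vpsiN} and \eqref{est:GhostvN} we apply Lemma~\ref{lem:hidden} directly with $\Phi_1=\widehat{\Gamma}^I\psi$ and $\Phi_2=\widehat{\Gamma}^I(v\psi)$, and then use \eqref{est:hatGfPhi} together with the fact that $v$ is $\R$-valued to rewrite the $[\widehat{\Gamma}^I(v\psi)]_-$ contribution as $\sum |\Gamma^{J_1}v|\,|[\widehat{\Gamma}^{J_2}\psi]_-|$, isolating a $[\,\cdot\,]_-$ projection of $\psi$ once more.

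Since $N\ge 14$, each resulting product has a low-index factor with index $\le N-7$, which enjoys the pointwise bootstrap bounds $|[\widehat{\Gamma}^I\psi]_-|\lesssim C\varepsilon\langle t+r\rangle^{-3/2+\delta}$ and $|\widehat{\Gamma}^I\psi|\lesssim C\varepsilon\langle t+r\rangle^{-1/2+\delta}\langle t-r\rangle^{-1}$. When the $[\,\cdot\,]_-$ factor is the low-index one, its fast pointwise decay placed in $L^\infty_x$ against the high-index Dirac $L^2_x$ energy leaves an integrable time weight $\langle s\rangle^{-3/2+2\delta}$, contributing $O(C^2\varepsilon^2)$ with no growth. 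In the opposite case, when the $[\,\cdot\,]_-$ factor sits at high index $|I_1|\le N$, we exploit $\langle t-r\rangle^{-1}\le \langle t-r\rangle^{-3/5}$ to absorb the ghost weight: the low-index Dirac factor satisfies $\|\langle s-r\rangle^{3/5}\widehat{\Gamma}^{I_2}\psi\|_{L^\infty_x}\lesssim C\varepsilon\langle s\rangle^{-1/2+\delta}$, and Cauchy--Schwarz in $s$ pairs it with $\|[\widehat{\Gamma}^{I_1}\psi]_-/\langle s-r\rangle^{3/5}\|_{L^2_x}$, controlled either by $\mathcal{E}^D(s,\widehat{\Gamma}^{I_1}\psi)^{1/2}\lesssim C\varepsilon\langle s\rangle^\delta$ directly (for the unweighted estimates) or by $(C\varepsilon)^{1/2}$ after introducing a $\langle s\rangle^{-3\delta/2}$ balancing factor (for the weighted estimates). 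For \eqref{est:LtLx1vpsiN} and \eqref{est:GhostvN} the triple product $|\widehat{\Gamma}^I\psi|\,|\Gamma^{J_1}v|\,|[\widehat{\Gamma}^{J_2}\psi]_-|$ is handled similarly, placing the Klein--Gordon factor in $L^\infty_x$ via the decay $\langle t+r\rangle^{-1}$ from \eqref{est:Bootv} when $|J_1|\le N-6$, and in $L^2_x$ using either the energy $\mathcal{G}_1(s,\Gamma^{J_1}v)^{1/2}$ or the $v$-ghost-weight otherwise.

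Summing over the sub-cases, the unweighted estimates \eqref{est:LtLx2psiN} and \eqref{est:LtLx1vpsiN} grow no faster than $\langle t\rangle^{2\delta}$, the dominant time integral being $\int_0^t\langle s\rangle^{-1+2\delta}\,\d s\sim \langle t\rangle^{2\delta}$; the weighted estimates \eqref{est:GhostpsiN} and \eqref{est:GhostvN} are bounded uniformly in $t$, with the square-root gain from the ghost-weight bootstrap lowering the powers of $C\varepsilon$ to $C^{3/2}\varepsilon^{3/2}$ and $C^2\varepsilon^2$ respectively. The main obstacle is the highest-order case $|I|=N$, for which the Dirac energy is only controlled with a $\langle t\rangle^\delta$ loss; without invoking Lemma~\ref{lem:hidden}, a naive split placing $\widehat{\Gamma}^I\psi$ in $L^2_x$ and the low-index factor in $L^\infty_x$ would produce the non-integrable time weight $\langle s\rangle^{-1/2+\delta}$ coming from the slow pointwise decay of the Dirac spinor, and prevent closure of the bootstrap. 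The hidden structure converts this borderline quantity into the integrable pairing $\langle s\rangle^{-1/2+\delta}\|[\widehat{\Gamma}^{I_1}\psi]_-/\langle s-r\rangle^{3/5}\|_{L^2_x}$, matching exactly the ghost-weight framework of \eqref{est:Bootpsi} and delivering all four bounds after Cauchy--Schwarz.
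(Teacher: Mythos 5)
Your proposal is correct and follows essentially the same route as the paper's proof: apply \eqref{est:hatGG}, \eqref{equ:HiddenPhi}, and \eqref{est:hatGfPhi} to rewrite every product in a form containing at least one $[\,\cdot\,]_-$ factor, then handle the two index-cases by putting the low-index $[\,\cdot\,]_-$ factor in $L^\infty_x$ for a $\langle s\rangle^{-3/2+2\delta}$ decay, or pairing the high-index $[\,\cdot\,]_-$ (and high-index $v$, for the triple products) with the ghost-weight bootstrap via Cauchy--Schwarz in $s$. Minor points of imprecision — $\mathcal{E}^D(s,\cdot)^{1/2}$ controls the $L^2_sL^2_x$ norm of $[\,\cdot\,]_-/\langle s-r\rangle^{3/5}$ on $[0,s]$, not the pointwise-in-$s$ $L^2_x$ norm, and you do not explicitly spell out the third sub-case of \eqref{equ:HiddenPhi} where the high-index $[\,\cdot\,]_-$ sits on the outer $\widehat\Gamma^I\psi$ while both $v$ and $[\,\cdot\,]_-$ are high order (the paper's $\mathcal{I}_{85}$-type term requiring both ghost weights simultaneously) — but these are accounted for by your general "place the KG factor in $L^2_x$ with its ghost weight otherwise" clause, so the argument as outlined closes.
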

\begin{proof}
	Proof of (i). First, from~\eqref{est:hatGG},~\eqref{equ:HiddenPhi} and $N\ge 14$, we find
	\begin{equation*}
\sum_{|I|\le N}	\int_{0}^{t}\left\|\Gamma^{I}\left(\psi^{*}\gamma^{0}\psi\right)\right\|_{L_{x}^{2}}\d s\lesssim \mathcal{I}_{81}+\mathcal{I}_{82},
	\end{equation*}
	where
	\begin{equation*}
	\begin{aligned}
	\mathcal{I}_{81}&=\sum_{|I_{1}|\le N}\sum_{|I_{2}|\le N-7}\int_{0}^{t}\big\|\big|\big[\widehat{{\Gamma}}^{I_{1}}\psi\big]_{-}\big|\big|\widehat{{\Gamma}}^{I_{2}}\psi\big|\big\|_{L_{x}^{2}}\d s,\\
	\mathcal{I}_{82}&=\sum_{|I_{1}|\le N-7}\sum_{|I_{2}|\le N}\int_{0}^{t}\big\|\big|\big[\widehat{{\Gamma}}^{I_{1}}\psi\big]_{-}\big|\big|\widehat{{\Gamma}}^{I_{2}}\psi\big|\big\|_{L_{x}^{2}}\d s.
	\end{aligned}
	\end{equation*}
	Note that, from the bootstrap assumption~\eqref{est:Bootpsi}, we have 
	\begin{equation*}
	\begin{aligned}
	&\sum_{|I|\le N}\int_{0}^{t}\langle s\rangle^{-\frac{1}{2}+\delta}\left\|\frac{\big[\widehat{{\Gamma}}^{I}\psi\big]_{-}}{\langle s-r\rangle}\right\|_{L_{x}^{2}}\d s\\
	&\lesssim \sum_{|I|\le N}\left(\int_{0}^{t}\langle s\rangle^{-1+2\delta}\d s\right)^{\frac{1}{2}}\left(\int_{0}^{t}\left\|\frac{\big[\widehat{{\Gamma}}^{I}\psi\big]_{-}}{\langle s-r\rangle}\right\|^{2}_{L_{x}^{2}}\d s\right)^{\frac{1}{2}}\lesssim C\varepsilon\langle t\rangle^{2\delta}.
	\end{aligned}
	\end{equation*}
	Therefore, using again~\eqref{est:Bootpsi} and the Cauchy-Schwarz inequality, we have 
	\begin{equation*}
	\begin{aligned}
	\mathcal{I}_{81}
	&\lesssim \sum_{\substack{|I_{1}|\le N\\|I_{2}|\le N-7}}\int_{0}^{t}\left\|\frac{\big[\widehat{{\Gamma}}^{I_{1}}\psi\big]_{-}}{\langle s-r\rangle}\right\|_{L_{x}^{2}}\big\|\langle s-r\rangle\widehat{{\Gamma}}^{I_{2}}\psi\big\|_{L_{x}^{\infty}}\d s\\
	&\lesssim C\varepsilon\sum_{|I_{1}|\le N}\int_{0}^{t}\langle s\rangle^{-\frac{1}{2}+\delta}\left\|\frac{\big[\widehat{{\Gamma}}^{I_{1}}\psi\big]_{-}}{\langle s-r\rangle}\right\|_{L_{x}^{2}}\d s\lesssim C^{2}\varepsilon^{2}\langle t\rangle^{2\delta},
	\end{aligned}
	\end{equation*}
		\begin{equation*}
	\mathcal{I}_{82}\lesssim \sum_{\substack{|I_{1}|\le N-7\\ |I_{2}|\le N}}\int_{0}^{t}\big\|\big[\widehat{{\Gamma}}^{I_{1}}\psi\big]_{-}\big\|_{L_{x}^{\infty}}\big\|\widehat{{\Gamma}}^{I_{2}}\psi\big\|_{L_{x}^{2}}\d s\lesssim C^{2}\varepsilon^{2}\int_{0}^{t}\langle s\rangle^{-\frac{3}{2}+2\delta}\d s \lesssim C^{2}\varepsilon^{2}.
	\end{equation*}
	Gathering these estimates, we have proved~\eqref{est:LtLx2psiN}.
	
	Second, from,~\eqref{est:hatGa},~\eqref{est:hatGfPhi},~\eqref{equ:HiddenPhi} and $N\ge 14$, we have 
	\begin{equation*}
	\sum_{|I|\le N}\int_{0}^{t}\int_{\R^{2}}\big|\big(\widehat{{\Gamma}}^{I}\psi\big)^{*}\gamma^{0}\widehat{{\Gamma}}^{I}(v\psi)\big|\d x \d s\lesssim \mathcal{I}_{83}+\mathcal{I}_{84}+\mathcal{I}_{85},
	\end{equation*}
	
		where
	\begin{equation*}
	\begin{aligned}
	\mathcal{I}_{83}&=\sum_{|I|\le N}\sum_{|I_{1}|\le N}\sum_{|I_{2}|\le N-6}\int_{0}^{t}\int_{\R^{2}}\big|\widehat{{\Gamma}}^{I}\psi\big|\big|\big[\widehat{{\Gamma}}^{I_{1}}\psi\big]_{-}\big|\left|\Gamma^{I_{2}}v\right|\d x \d s,\\
	\mathcal{I}_{84}&=\sum_{|I|\le N}\sum_{|I_{1}|\le N-7}\sum_{|I_{2}|\le N}\int_{0}^{t}\int_{\R^{2}}\big|\widehat{{\Gamma}}^{I}\psi\big|\big|\big[\widehat{{\Gamma}}^{I_{1}}\psi\big]_{-}\big|\left|\Gamma^{I_{2}}v\right|\d x \d s,\\
	\mathcal{I}_{85}&=\sum_{|I|\le N}\sum_{|I_{1}|\le N-7}\sum_{|I_{2}|\le N}\int_{0}^{t}\int_{\R^{2}}\big|\big[\widehat{{\Gamma}}^{I}\psi\big]_{-}\big|\big|\widehat{{\Gamma}}^{I_{1}}\psi\big|\left|\Gamma^{I_{2}}v\right|\d x \d s.
	\end{aligned}
	\end{equation*}
	
	Using~\eqref{est:Bootpsi} and~\eqref{est:Bootv}, we find
	\begin{equation*}
	\begin{aligned}
	\mathcal{I}_{83}
	&\lesssim \sum_{\substack{|I|\le N\\ |I_{1}|\le N}}\sum_{|I_{2}|\le N-6}\int_{0}^{t}\big\|\widehat{{\Gamma}}^{I}\psi\big\|_{L_{x}^{2}}\big\|\widehat{{\Gamma}}^{I_{1}}\psi\big\|_{L_{x}^{2}}\left
	\|\Gamma^{I_{2}}v\right
	\|_{L_{x}^{\infty}} \d s\\
	&\lesssim C^{3}\varepsilon^{3}\int_{0}^{t}\langle s\rangle^{\delta}\langle s\rangle^{\delta}\langle s\rangle^{-1}\d s \lesssim C^{3}\varepsilon^{3}\int_{0}^{t}\langle s\rangle^{-1+2\delta}\d s\lesssim C^{3}\varepsilon^{3}\langle t\rangle^{2\delta},
	\end{aligned}
	\end{equation*}
	
	\begin{equation*}
	\begin{aligned}
	\mathcal{I}_{84}
	&\lesssim \sum_{\substack{|I|\le N\\ |I_{2}|\le N}}\sum_{|I_{1}|\le N-7}\int_{0}^{t}\big\|\widehat{{\Gamma}}^{I}\psi\big\|_{L_{x}^{2}}\big\|\big[\widehat{{\Gamma}}^{I_{1}}\psi\big]_{-}\big\|_{L_{x}^{\infty}}\left\|\Gamma^{I_{2}}v\right\|_{L_{x}^{2}} \d s\\
	&\lesssim C^{3}\varepsilon^{3}\int_{0}^{t}\langle s\rangle^{\delta}\langle s\rangle^{-\frac{3}{2}+\delta}\langle s \rangle^{2\delta}\d s \lesssim C^{3}\varepsilon^{3}\int_{0}^{t}\langle s\rangle^{-\frac{3}{2}+4\delta}\d s \lesssim C^{3}\varepsilon^{3}.
	\end{aligned}
	\end{equation*}
	Then, using again~\eqref{est:Bootpsi} and~\eqref{est:Bootv}, we have 
	\begin{equation*}
	\begin{aligned}
	\mathcal{I}_{85}
	&\lesssim C\varepsilon \sum_{\substack{|I|\le N\\ |I_{2}|\le N}}\int_{0}^{t}\langle s\rangle^{-\frac{3}{10}+\delta}
	\left\|\frac{\big[\widehat{{\Gamma}}^{I}\psi\big]_{-}}{\langle r-s\rangle^{\frac{3}{5}}}\right\|_{L_{x}^{2}}\left\|\frac{\Gamma^{I_{2}}v}{\langle r-s\rangle^{\frac{3}{5}}}\right\|_{L_{x}^{2}}\d s\\
	&\lesssim C\varepsilon \sum_{\substack{|I|\le N\\ |I_{2}|\le N}}\left(\int_{0}^{t}	\left\|\frac{\big[\widehat{{\Gamma}}^{I}\psi\big]_{-}}{\langle r-s\rangle^{\frac{3}{5}}}\right\|_{L_{x}^{2}}^{2}\d s\right)^{\frac{1}{2}}\left(\int_{0}^{t}\langle s\rangle^{-5\delta}\left\|\frac{\Gamma^{I_{2}}v}{\langle r-s\rangle^{\frac{3}{5}}}\right\|_{L_{x}^{2}}^{2}\d s \right)^{\frac{1}{2}}\lesssim C^{\frac{5}{2}}\varepsilon^{\frac{5}{2}}\langle t\rangle^{\delta}.
	\end{aligned}
	\end{equation*}
	
	Gathering these estimates, we have proved~\eqref{est:LtLx1vpsiN}.

	Proof of (ii). First, from~\eqref{est:hatGG},~\eqref{equ:HiddenPhi} and $N\ge 14$, we find
	
	\begin{equation*}
	\sum_{|I|\le N}	\int_{0}^{t}\langle s\rangle^{-3\delta}\left\|\Gamma^{I}\left(\psi^{*}\gamma^{0}\psi\right)\right\|_{L_{x}^{2}}\d s\lesssim \mathcal{I}_{91}+\mathcal{I}_{92},
	\end{equation*}
	where
	\begin{equation*}
	\begin{aligned}
	\mathcal{I}_{91}&=\sum_{|I_{1}|\le N-7}\sum_{|I_{2}|\le N}\int_{0}^{t}\langle s\rangle^{-3\delta}\big\|\big|\big[\widehat{{\Gamma}}^{I_{1}}\psi\big]_{-}\big|\big|\widehat{{\Gamma}}^{I_{2}}\psi\big|\big\|_{L_{x}^{2}}\d s,\\
	\mathcal{I}_{92}&=\sum_{|I_{1}|\le N}\sum_{|I_{2}|\le N-7}\int_{0}^{t}\langle s\rangle^{-3\delta}\big\|\big|\big[\widehat{{\Gamma}}^{I_{1}}\psi\big]_{-}\big|\big|\widehat{{\Gamma}}^{I_{2}}\psi\big|\big\|_{L_{x}^{2}}\d s.
	\end{aligned}
	\end{equation*}
	Note that, using~\eqref{est:Bootpsi} and the Cauchy-Schwarz inequality, we have 
	\begin{equation*}
	\begin{aligned}
&	\sum_{|I|\le N}\int_{0}^{t}\langle s\rangle^{-\frac{1}{2}-2\delta}\left\|\frac{\big[\widehat{{\Gamma}}^{I}\psi\big]_{-}}{\langle r-s\rangle}\right\|_{L_{x}^{2}}\d s \\
&\lesssim \sum_{|I|\le N}\left(\int_{0}^{t}\langle s\rangle^{-1-\delta}\d s\right)^{\frac{1}{2}}\left(\int_{0}^{t}\langle s\rangle^{-3\delta}\left\|\frac{\big[\widehat{{\Gamma}}^{I}\psi\big]_{-}}{\langle r-s\rangle}\right\|_{L_{x}^{2}}^{2}\d s\right)^{\frac{1}{2}}\lesssim C^{\frac{1}{2}}\varepsilon^{\frac{1}{2}}.
	\end{aligned}
	\end{equation*}
	Therefore, using again~\eqref{est:Bootpsi} and~\eqref{est:Bootv}, we check
	\begin{equation*}
	\begin{aligned}
	\mathcal{I}_{91}&\lesssim \sum_{\substack{|I_{1}|\le N-7\\ |I_{2}|\le N}}\int_{0}^{t}\langle s\rangle^{-3\delta}\big\|\big[\widehat{{\Gamma}}^{I_{1}}\psi\big]_{-}\big\|_{L_{x}^{\infty}}\big\|\widehat{{\Gamma}}^{I_{2}}\psi\big\|_{L_{x}^{2}}\d s\\
	&\lesssim C^{2}\varepsilon^{2}\int_{0}^{t}\langle s\rangle^{-3\delta}\langle s\rangle^{-\frac{3}{2}+\delta}\langle s\rangle^{\delta}\d s \lesssim C^{2}\varepsilon^{2}\int_{0}^{t}\langle s\rangle^{-\frac{3}{2}-\delta}\d s \lesssim C^{2}\varepsilon^{2},
	\end{aligned}
	\end{equation*}
	\begin{equation*}
	\begin{aligned}
	\mathcal{I}_{92}&\lesssim \sum_{\substack{|I_{1}|\le N\\ |I_{2}|\le N-7}}\int_{0}^{t}\langle s\rangle^{-3\delta}\left\|\frac{\big[\widehat{{\Gamma}}^{I_{1}}\psi\big]_{-}}{\langle s-r\rangle}\right\|_{L_{x}^{2}}\big\|\langle s-r\rangle\widehat{{\Gamma}}^{I_{2}}\psi\big\|_{L_{x}^{\infty}}\d s\\
	&\lesssim C\varepsilon\sum_{|I_{1}|\le N}\int_{0}^{t}\langle s\rangle^{-\frac{1}{2}-2\delta}\left\|\frac{\big[\widehat{{\Gamma}}^{I_{1}}\psi\big]_{-}}{\langle s-r\rangle}\right\|_{L_{x}^{2}}\d s \lesssim C^{\frac{3}{2}}\varepsilon^{\frac{3}{2}}.
	\end{aligned}
	\end{equation*}
	Gathering these estimates, we have proved~\eqref{est:GhostpsiN}.
	
	Second, from,~\eqref{est:hatGa},~\eqref{est:hatGfPhi},~\eqref{equ:HiddenPhi} and $N\ge 14$, we have 
\begin{equation*}
\sum_{|I|\le N}\int_{0}^{t}\langle s\rangle^{-3\delta}\int_{\R^{2}}\big|\big(\widehat{{\Gamma}}^{I}\psi\big)^{*}\gamma^{0}\widehat{{\Gamma}}^{I}(v\psi)\big|\d x \d s\lesssim \mathcal{I}_{93}+\mathcal{I}_{94}+\mathcal{I}_{95},
\end{equation*}

where
\begin{equation*}
\begin{aligned}
\mathcal{I}_{93}&=\sum_{|I|\le N}\sum_{|I_{1}|\le N}\sum_{|I_{2}|\le N-6}\int_{0}^{t}\langle s\rangle^{-3\delta}\int_{\R^{2}}\big|\widehat{{\Gamma}}^{I}\psi\big|\big|\big[\widehat{{\Gamma}}^{I_{1}}\psi\big]_{-}\big|\left|\Gamma^{I_{2}}v\right|\d x \d s,\\
\mathcal{I}_{94}&=\sum_{|I|\le N}\sum_{|I_{1}|\le N-7}\sum_{|I_{2}|\le N}\int_{0}^{t}\langle s\rangle^{-3\delta}\int_{\R^{2}}\big|\widehat{{\Gamma}}^{I}\psi\big|\big|\big[\widehat{{\Gamma}}^{I_{1}}\psi\big]_{-}\big|\left|\Gamma^{I_{2}}v\right|\d x \d s,\\
\mathcal{I}_{95}&=\sum_{|I|\le N}\sum_{|I_{1}|\le N-7}\sum_{|I_{2}|\le N}\int_{0}^{t}\langle s\rangle^{-3\delta}\int_{\R^{2}}\big|\big[\widehat{{\Gamma}}^{I}\psi\big]_{-}\big|\big|\widehat{{\Gamma}}^{I_{1}}\psi\big|\left|\Gamma^{I_{2}}v\right|\d x \d s.
\end{aligned}
\end{equation*}
	Using~\eqref{est:Bootpsi} and~\eqref{est:Bootv}, we find
\begin{equation*}
\begin{aligned}
\mathcal{I}_{93}
&\lesssim \sum_{\substack{|I|\le N\\ |I_{1}|\le N}}\sum_{|I_{2}|\le N-6}\int_{0}^{t}\langle s\rangle^{-3\delta}\big\|\widehat{{\Gamma}}^{I}\psi\big\|_{L_{x}^{2}}\big\|\widehat{{\Gamma}}^{I_{1}}\psi\big\|_{L_{x}^{2}}\left
\|\Gamma^{I_{2}}v\right
\|_{L_{x}^{\infty}} \d s\\
&\lesssim C^{3}\varepsilon^{3}\int_{0}^{t}\langle s\rangle^{-3\delta}\langle s\rangle^{\delta}\langle s\rangle^{\delta}\langle s\rangle^{-1}\d s \lesssim C^{3}\varepsilon^{3}\int_{0}^{t}\langle s\rangle^{-1-\delta}\d s\lesssim C^{3}\varepsilon^{3},
\end{aligned}
\end{equation*}

\begin{equation*}
\begin{aligned}
\mathcal{I}_{94}
&\lesssim \sum_{\substack{|I|\le N\\ |I_{2}|\le N}}\sum_{|I_{1}|\le N-7}\int_{0}^{t}\langle s\rangle^{-3\delta}\big\|\widehat{{\Gamma}}^{I}\psi\big\|_{L_{x}^{2}}\big\|\big[\widehat{{\Gamma}}^{I_{1}}\psi\big]_{-}\big\|_{L_{x}^{\infty}}\left\|\Gamma^{I_{2}}v\right\|_{L_{x}^{2}} \d s\\
&\lesssim C^{3}\varepsilon^{3}\int_{0}^{t}\langle s\rangle^{-3\delta}\langle s\rangle^{\delta}\langle s\rangle^{-\frac{3}{2}+\delta}\langle s \rangle^{2\delta}\d s \lesssim C^{3}\varepsilon^{3}\int_{0}^{t}\langle s\rangle^{-\frac{3}{2}+\delta}\d s \lesssim C^{3}\varepsilon^{3}.
\end{aligned}
\end{equation*}
Then, using again~\eqref{est:Bootpsi} and~\eqref{est:Bootv}, we have 
\begin{equation*}
\begin{aligned}
\mathcal{I}_{95}
&\lesssim C\varepsilon \sum_{\substack{|I|\le N\\ |I_{2}|\le N}}\int_{0}^{t}\langle s\rangle^{-\frac{3}{10}-2\delta}
\left\|\frac{\big[\widehat{{\Gamma}}^{I}\psi\big]_{-}}{\langle r-s\rangle^{\frac{3}{5}}}\right\|_{L_{x}^{2}}\left\|\frac{\Gamma^{I_{2}}v}{\langle r-s\rangle^{\frac{3}{5}}}\right\|_{L_{x}^{2}}\d s\\
&\lesssim C\varepsilon \sum_{|I|\le N}\int_{0}^{t}\langle s\rangle^{-5\delta}\left(\left\|\frac{\big[\widehat{{\Gamma}}^{I}\psi\big]_{-}}{\langle r-s\rangle^{\frac{3}{5}}}\right\|_{L_{x}^{2}}^{2}+\left\|\frac{\Gamma^{I}v}{\langle r-s\rangle^{\frac{3}{5}}}\right\|_{L_{x}^{2}}^{2}\right)\d s\lesssim C^{2}\varepsilon^{2}.
\end{aligned}
\end{equation*}
Gathering these estimates, we have proved~\eqref{est:GhostvN}.
	\end{proof}

\subsection{Proof of Proposition~\ref{pro:main}}\label{Se:End}
We prove the Proposition~\ref{pro:main} by improving all the estimates of $(\psi,v)$ in~\eqref{est:Bootpsi} and~\eqref{est:Bootv}.

\begin{proof}[Proof of Proposition~\ref{pro:main}]
	For any initial data $(\psi_{0},\vec{v}_{0})$ satisfying the smallness condition~\eqref{est:smallness}, we consider the corresponding solution $(\psi,v)$ of~\eqref{equ:DKG}. From the smallness condition~\eqref{est:smallness}, we see that 
	\begin{equation}\label{est:initialenergy}
	\begin{aligned}
	\sum_{|I|\le N-1}\mathcal{E}^{D}(0,\widehat{\Gamma}^{I}{\widetilde{\psi}})^{\frac{1}{2}}+\sum_{|I|\le N}\left(\mathcal{E}^{D}(0,\widehat{\Gamma}^{I}{\psi})^{\frac{1}{2}}+\|\langle r\rangle \widehat{\Gamma}^{I}\psi(0,x)\|_{L_{x}^{2}}\right)&\lesssim \varepsilon,\\
	\sum_{|I|\le N-1}\mathcal{G}_{1}(0,{{\Gamma}}^{I}\tilde{v})^{\frac{1}{2}}+\sum_{|I|\le N}\left(\mathcal{G}_{1}(0,{{\Gamma}}^{I}v)^{\frac{1}{2}}+\|\langle r\rangle \Gamma^{I}v(0,x)\|_{H_{x}^{1}}\right)&\lesssim \varepsilon.
	\end{aligned}
	\end{equation}

	\textbf{Step 1.} Closing the estimates in $\mathcal{E}^{D}(t,\widehat{\Gamma}^{I}{\psi})$.
	Note that, from \eqref{est:hatGa}, \eqref{est:Energyphi1}, \eqref{equ:Hiddenpsi}, \eqref{est:LtLxQ0}, \eqref{est:LtLxN1N2} and \eqref{est:initialenergy}, we have 
	\begin{equation*}
	\begin{aligned}
	\sum_{|I|\le N-1}\mathcal{E}^{D}(t,\widehat{{\Gamma}}^{I}\widetilde{\psi})^{\frac{1}{2}}
	&\lesssim \sum_{|I|\le N-1}\mathcal{E}^{D}(0,\widehat{{\Gamma}}^{I}\widetilde{\psi})^{\frac{1}{2}}+\sum_{|I|\le N-1}\int_{0}^{t}\big\|\Gamma^{I}Q_{0}(\psi,v)\big\|_{L_{x}^{2}}\d s \\
	&+\sum_{|I|\le N-1}\int_{0}^{t}\left(\big\|\Gamma^{I}\mathcal{N}_{1}(\psi,v)\big\|_{L_{x}^{2}}+\big\|\Gamma^{I}\mathcal{N}_{2}(\psi,v)\big\|_{L_{x}^{2}}\right)\d s\\
	&\lesssim \varepsilon+C^{2}\varepsilon^{2}+C^{3}\varepsilon^{3},
	\end{aligned}
	\end{equation*}
	 Then, from~\eqref{est:hatGa},~\eqref{est:pointGlobal} and $N\ge 14$, 
	 \begin{equation*}
	 \begin{aligned}
	 &\sum_{|I|\le N-1}\big\|\widehat{{\Gamma}}^{I}\big(\gamma^{\mu}\partial_{\mu}(v\psi)\big)\big\|_{L_{x}^{2}}\\
	 &\lesssim \sum_{\substack{|I_{1}|\le N\\ |I_{2}|\le N-4}}\left\|{{\Gamma}}^{I_{1}}v\right\|_{L_{x}^{2}} \big\|\widehat{{\Gamma}}^{I_{2}}\psi\big\|_{L_{x}^{\infty}}+\sum_{\substack{|I_{1}|\le N-4\\ |I_{2}|\le N}}\left\|{{\Gamma}}^{I_{1}}v\right\|_{L_{x}^{\infty}} \big\|\widehat{{\Gamma}}^{I_{2}}\psi\big\|_{L_{x}^{2}}\lesssim C^{2}\varepsilon^{2}.
	 \end{aligned}
	 \end{equation*}
	 Next, using~\eqref{est:hatGfPhi},~\eqref{est:Bootpsi},~\eqref{est:Bootv} and $N\ge 14$, we have 
	 \begin{equation*}
	 \begin{aligned}
	 &\sum_{|I|\le N}\bigg(\int_{0}^{t}\int_{\R^{2}}\frac{\big|\big[\widehat{{\Gamma}}^{I}(v\psi)\big]_{-}\big|^{2}}{\langle r-s\rangle^{\frac{6}{5}}}\d x \d s \bigg)^{\frac{1}{2}}\\
	 &\lesssim \sum_{\substack{|I_{1}|\le N\\ |I_{2}|\le N-7}}\left(\int_{0}^{t}\left(\big\|\big[\widehat{{\Gamma}}^{I_{2}}\psi\big]_{-}\big\|^{2}_{L_{x}^{\infty}}\int_{\R^{2}}\frac{\left|\Gamma^{I_{1}}v\right|^{2}}{\langle r-s\rangle^{\frac{6}{5}}}\d x\right)\d s \right)^{\frac{1}{2}}\\
	 &+\sum_{\substack{|I_{1}|\le N-6\\ |I_{2}|\le N}}\left(\int_{0}^{t}\left\|\Gamma^{I_{1}}v\right\|_{L_{x}^{\infty}}^{2}\big\|\widehat{{\Gamma}}^{I_{2}}\psi\big\|_{L_{x}^{2}}^{2}\d s \right)^{\frac{1}{2}}\lesssim C^{\frac{3}{2}}\varepsilon^{\frac{3}{2}}.
	 \end{aligned}
	 \end{equation*}
	 Combining the above estimates with the definition of $\widetilde{\psi}$ in \S\ref{Se:Hidden}, we obtain 
	 \begin{equation}\label{est:EnergypsiN-1}
	 \begin{aligned}
	 \sum_{|I|\le N-1}\mathcal{E}^{D}(t,\widehat{{\Gamma}}^{I}{\psi})^{\frac{1}{2}}&\lesssim \sum_{|I|\le N}\bigg(\int_{0}^{t}\int_{\R^{2}}\frac{\big|\big[\widehat{{\Gamma}}^{I}(v\psi)\big]_{-}\big|^{2}}{\langle r-s\rangle^{\frac{6}{5}}}\d x \d s \bigg)^{\frac{1}{2}}\\
	 &+\sum_{|I|\le N-1}\left(\mathcal{E}^{D}(t,\widehat{{\Gamma}}^{I}\widetilde{\psi})^{\frac{1}{2}}+\big\|\widehat{{\Gamma}}^{I}\big(\gamma^{\mu}\partial_{\mu}(v\psi)\big)\big\|_{L_{x}^{2}}\right)\\
	 &\lesssim \varepsilon+C^{\frac{3}{2}}\varepsilon^{\frac{3}{2}}+C^{2}\varepsilon^{2}+C^{3}\varepsilon^{3}\lesssim \varepsilon,
	 \end{aligned}
	 \end{equation}
	  for $\varepsilon$ small enough (depending on $C$).
	 Then, from~\eqref{est:Energyphi2},~\eqref{est:LtLx1vpsiN} and~\eqref{est:initialenergy}, we see that 
	 \begin{equation}\label{est:EnergypsiN}
	 \begin{aligned}
	 \sum_{|I|\le N}\mathcal{E}^{D}(t,\widehat{{\Gamma}}^{I}\psi)&\lesssim \sum_{|I|\le N}\int_{0}^{t}\int_{\R^{2}}\big|\big(\widehat{{\Gamma}}^{I}\psi\big)^{*}\gamma^{0}\widehat{{\Gamma}}^{I}(v\psi)\big|\d x \d s\\
	 &+\sum_{|I|\le N}\mathcal{E}^{D}(0,\widehat{{\Gamma}}^{I}\psi)\lesssim \varepsilon^{2}+C^{\frac{5}{2}}\varepsilon^{\frac{5}{2}}\langle t\rangle^{2\delta}.
	 \end{aligned}
	 \end{equation}
	 
	 Estimates~\eqref{est:EnergypsiN-1} and~\eqref{est:EnergypsiN} strictly improve the estimates of $\mathcal{E}^{D}(t,\widehat{{\Gamma}}^{I}\psi)$ in~\eqref{est:Bootpsi}.
	 
	 \smallskip
	 \textbf{Step 2.} Closing the estimates in $\mathcal{G}_{1}(t,{\Gamma}^{I}{v})$.
	 Note that, from~\eqref{est:hatGa}, \eqref{est:EnergyKG}, \eqref{equ:Hiddenv}, \eqref{est:LtLxN3N4} and~\eqref{est:initialenergy}, we have 
	 \begin{equation*}
	 \begin{aligned}
	 \sum_{|I|\le N-1}\mathcal{G}_{1}(t,{{\Gamma}}^{I}\tilde{v})^{\frac{1}{2}}
	 &\lesssim \sum_{|I|\le N-1}\mathcal{G}_{1}(0,{{\Gamma}}^{I}\tilde{v})^{\frac{1}{2}}+\sum_{|I|\le N-1}\int_{0}^{t}\big\|\Gamma^{I}\mathcal{N}_{3}(\psi,v)\big\|_{L_{x}^{2}}\d s \\
	 &+\sum_{|I|\le N-1}\int_{0}^{t}\big\|\Gamma^{I}\mathcal{N}_{4}(\psi,v)\big\|_{L_{x}^{2}}\d s\lesssim \varepsilon+C^{2}\varepsilon^{2}\lesssim \varepsilon,
	 \end{aligned}
	 \end{equation*}
	 for $\varepsilon$ small enough (depending on $C$). Then, from~\eqref{est:hatGa},~\eqref{est:pointGlobal} and $N\ge 14$, 
	 \begin{equation*}
	 \begin{aligned}
	 \sum_{|I|\le N}\big\|{{\Gamma}}^{I}\big(\psi^{*}\gamma^{0}\psi\big)\big\|_{L_{x}^{2}}
	 &\lesssim \sum_{\substack{|I_{1}|\le N\\ |I_{2}|\le N-4}}\left\|{{\Gamma}}^{I_{1}}v\right\|_{L_{x}^{2}} \big\|\widehat{{\Gamma}}^{I_{2}}\psi\big\|_{L_{x}^{\infty}}\lesssim C^{2}\varepsilon^{2}\lesssim \varepsilon.
	 \end{aligned}
	 \end{equation*}
	 Next, from~\eqref{est:hatGG},~\eqref{est:Bootpsi} and~\eqref{est:pointGlobal}, we have 
	 \begin{equation*}
	 \begin{aligned}
	 &\sum_{|I|\le N}\left(\int_{0}^{t}\int_{\R^{2}}\frac{\left|\Gamma^{I}\left(\psi^{*}\gamma^{0}\psi\right)\right|^{2}}{\langle r-s\rangle^{\frac{6}{5}}}\d x\d s \right)^{\frac{1}{2}}\\
	 &\lesssim \sum_{\substack{|I_{1}|\le N\\ |I_{2}|\le N-4}}\left(\int_{0}^{t}\left(\big\|\widehat{{\Gamma}}^{I_{2}}\psi\big\|^{2}_{L_{x}^{\infty}}\int_{\R^{2}}\frac{\big|\big[\widehat{\Gamma}^{I_{1}}\psi\big]_{-}\big|^{2}}{\langle r-s\rangle^{\frac{6}{5}}}\d x\right)\d s \right)^{\frac{1}{2}}\\
	 &+\sum_{\substack{|I_{1}|\le N-7\\ |I_{2}|\le N}}\left(\int_{0}^{t}\big\|\big[\widehat{{\Gamma}}^{I_{1}}\psi\big]_{-}\big\|^{2}_{L_{x}^{\infty}}\big\|\widehat{{\Gamma}}^{I_{2}}\psi\big\|^{2}_{L_{x}^{2}}\d s \right)^{\frac{1}{2}}\lesssim C^{\frac{3}{2}}\varepsilon^{\frac{3}{2}}.
	 \end{aligned}
	 \end{equation*}
	 Combining the above estimates with the definition of $\widetilde{v}$ in \S\ref{Se:Hidden}, we obtain 
	 \begin{equation}\label{est:EnergyvN-1}
	 \begin{aligned}
	 \sum_{|I|\le N-1}\mathcal{G}_{1}(t,{{\Gamma}}^{I}{v})^{\frac{1}{2}}
	 &\lesssim \sum_{|I|\le N-1}\mathcal{G}_{1}(t,{{\Gamma}}^{I}\widetilde{v})^{\frac{1}{2}}+\sum_{|I|\le N}\big\|{{\Gamma}}^{I}\big(\psi^{*}\gamma^{0}\psi\big)\big\|_{L_{x}^{2}}\\
	 &+\sum_{|I|\le N}\left(\int_{0}^{t}\int_{\R^{2}}\frac{\left|\Gamma^{I}\left(\psi^{*}\gamma^{0}\psi\right)\right|^{2}}{\langle r-s\rangle^{\frac{6}{5}}}\d x\d s \right)^{\frac{1}{2}}\lesssim \varepsilon.
	 \end{aligned}
	 \end{equation}
	 Then, from~\eqref{est:EnergyKG},~\eqref{est:LtLx2psiN} and~\eqref{est:initialenergy}, we see that 
	 \begin{equation}\label{est:EnergyvN}
	 \begin{aligned}
	 &\sum_{|I|\le N}\mathcal{G}_{1}(t,{{\Gamma}}^{I}v)^{\frac{1}{2}}\\
	 &\lesssim \sum_{|I|\le N}\left(\mathcal{G}_{1}(0,{{\Gamma}}^{I}v)^{\frac{1}{2}}+\int_{0}^{t}\left\|\Gamma^{I}\left(\psi^{*}\gamma^{0}\psi\right)\right\|_{L_{x}^{2}}\d x\right)\lesssim \varepsilon+C^{2}\varepsilon^{2}\langle t\rangle^{2\delta}.
	 \end{aligned}
	 \end{equation}
	 
	 Estimates~\eqref{est:EnergyvN-1} and~\eqref{est:EnergyvN} strictly improve the estimates of $\mathcal{G}_{1}(t,{{\Gamma}}^{I}v)$ in~\eqref{est:Bootv}.
	 
	 \smallskip
	 \textbf{Step 3.} Closing the energy estimates of $(\psi,v)$ in the exterior region. Note that, from~\eqref{est:Bootpsi}, we have 
	 \begin{equation*}
	 \sum_{|I|\le N-7}\left\|\widehat{{\Gamma}}^{I}\psi(t,x)\bchar_{\{r-2t\ge 1\}}\right\|_{L_{x}^{\infty}}\lesssim C\varepsilon\langle t\rangle^{-\frac{3}{2}+\delta}.
	 \end{equation*}
	 Therefore, from,~\eqref{est:hatGa},~\eqref{est:Bootpsi},~\eqref{est:Bootv}, $N\ge 14$, and the Cauchy-Schwarz inequality, 
	 \begin{equation*}
	 \begin{aligned}
	 &\sum_{|I|\le N}\int_{0}^{t}\|\langle r-s\rangle\chi(r-2s)|\widehat{\Gamma}^{I}(v\psi)|\|_{L_{x}^{2}}\d s \\
	 &\lesssim \sum_{\substack{|I_{1}|\le N\\|I_{2}|\le N-7}}\int_{0}^{t}\|\langle r-s\rangle\chi(r-2s)\Gamma^{I_{1}}v\|_{L_{x}^{2}}\big\|\big(\widehat{{\Gamma}}^{I_{2}}\psi\big)\bchar_{\{r-2s\ge 1\}}\big\|_{L_{x}^{\infty}}\d s \\
	 &+\sum_{\substack{|I_{1}|\le N-6\\|I_{2}|\le N}}\int_{0}^{t}\|\langle r-s\rangle\chi(r-2s)\widehat{\Gamma}^{I_{2}}\psi\|_{L_{x}^{2}}\big\|{{\Gamma}}^{I_{1}}v\big\|_{L_{x}^{\infty}}\d s \lesssim C^{2}\varepsilon^{2}\langle t\rangle^{\delta},
	 \end{aligned}
	 \end{equation*}
	 	 \begin{equation*}
	 \begin{aligned}
	 &\sum_{|I|\le N}\int_{0}^{t}\|\langle r-s\rangle\chi(r-2s)|{\Gamma}^{I}(\psi^{*}\gamma^{0}\psi)|\|_{L_{x}^{2}}\d s \\
	 &\lesssim \sum_{\substack{|I_{1}|\le N\\|I_{2}|\le N-7}}\int_{0}^{t}\|\langle r-s\rangle\chi(r-2s)\widehat{\Gamma}^{I_{1}}\psi\|_{L_{x}^{2}}\big\|\big(\widehat{{\Gamma}}^{I_{2}}\psi\big)\bchar_{\{r-2s\ge 1\}}\big\|_{L_{x}^{\infty}}\d s \lesssim C^{2}\varepsilon^{2}.
	 \end{aligned}
	 \end{equation*}
	 Based on~\eqref{equ:DKG},~\eqref{est:cutenergyphi},~\eqref{est:cutenergyv},~\eqref{est:initialenergy} and above two estimates, we obtain
	 \begin{equation*}
	 \begin{aligned}
	& \sum_{|I|\le N}\big\|\langle r-s\rangle\chi(r-2s)\big|\widehat{{\Gamma}}^{I}\psi\big|\big\|_{L_{x}^{2}}\\
	&\lesssim \sum_{|I|\le N}\big(\big\|\langle r\rangle \widehat{{\Gamma}}^{I}\psi(0,x)\big\|_{L_{x}^{2}}+\int_{0}^{t}\|\langle r-s\rangle\chi(r-2s)|\widehat{\Gamma}^{I}(v\psi)|\|_{L_{x}^{2}}\d s\big)\lesssim \varepsilon+C^{2}\varepsilon^{2}\langle t\rangle^{\delta},
	 \end{aligned}
	 \end{equation*}
	 \begin{equation*}
	 \begin{aligned}
	 & \sum_{|I|\le N}\big\|\langle r-s\rangle\chi(r-2s)\big|{{\Gamma}}^{I}v\big|\big\|_{L_{x}^{2}}\\
	 &\lesssim \sum_{|I|\le N}\big\|\langle r\rangle {{\Gamma}}^{I}v(0,x)\big\|_{H_{x}^{1}}+\sum_{|I|\le N}\big\|\langle r\rangle {{\Gamma}}^{I}\pt v(0,x)\big\|_{L_{x}^{2}}\\
	 &+\sum_{|I|\le N}\int_{0}^{t}\|\langle r-s\rangle\chi(r-2s)|{\Gamma}^{I}(\psi^{*}\gamma^{0}\psi)|\|_{L_{x}^{2}}\d s\lesssim \varepsilon+C^{2}\varepsilon^{2}.
	 \end{aligned}
	 \end{equation*}
	 These strictly improve the energy estimates of $(\psi,v)$ in the exterior region in the bootstrap assumption~\eqref{est:Bootpsi}-\eqref{est:Bootv} for $C$ large enough and $\varepsilon$ sufficiently small.
	 
	 \smallskip
	 {\textbf{Step 4.}} Closing the modified ghost weight estimates of $(\psi,v)$. From~\eqref{est:ModiGhostphi},~\eqref{est:ModighostKG}, \eqref{est:Bootv}, \eqref{est:GhostpsiN}, \eqref{est:GhostvN} and~\eqref{est:initialenergy}, we see that 
	 \begin{equation*}
	 \begin{aligned}
	 &\sum_{|I|\le N}\int_{0}^{t}\langle s\rangle^{-3\delta}\int_{\R^{2}}\frac{\big[\widehat{{\Gamma}}^{I}\psi\big]^{2}_{-}}{\langle r-s\rangle^{\frac{6}{5}}}\d x \d s\\
	 &\lesssim \sum_{|I|\le N}\left(\int_{\R^{2}}\big|\widehat{{\Gamma}}^{I}\psi_{0}\big|^{2}\d x +
	 \int_{0}^{t}\langle s\rangle^{-3\delta}\int_{\R^{2}}\big|\big(\widehat{{\Gamma}}^{I}\psi\big)^{*}\gamma^{0}\big(\widehat{{\Gamma}}^{I}(v\psi)\big)\big|\d x \d s
	 \right)\lesssim \varepsilon^{2}+C^{2}\varepsilon^{2},
	  \end{aligned}
	 \end{equation*}
	  \begin{equation*}
	 \begin{aligned}
	 &\sum_{|I|\le N}\int_{0}^{t}\langle s\rangle^{-5\delta}\int_{\R^{2}}\frac{\left(\Gamma^{I}v\right)^{2}}{\langle r-s\rangle^{\frac{6}{5}}}\d x \d s\\
	 &\lesssim \sum_{|I|\le N}\left(\mathcal{E}_{1}(0,\Gamma^{I}v) +
	 C\varepsilon\int_{0}^{t}\langle s\rangle^{-3\delta}\big\|\Gamma^{I}\big(\psi^{*}\gamma^{0}\psi\big)\big\|_{L_{x}^{2}} \d s
	 \right)\lesssim \varepsilon^{2}+C^{\frac{5}{2}}\varepsilon^{\frac{5}{2}}.
	 \end{aligned}
	 \end{equation*}
	  These strictly improve the modified ghost weight estimates of $(\psi,v)$ in the bootstrap assumption~\eqref{est:Bootpsi}-\eqref{est:Bootv} for $C$ large enough and $\varepsilon$ small enough.

	 \smallskip
	 {\textbf{Step 5.}} Closing the pointwise estimates of $(\psi,v)$. First, from~\eqref{est:Gaparf},~\eqref{est:hatparGa},~\eqref{est:pointwiseSG} and the definition of $\Psi$ in~\S\ref{Se:Hidden}, we see that 
	 \begin{equation*}
	 \begin{aligned}
	 \sum_{|I|\le N-7}\big|\widehat{{\Gamma}}^{I}\psi\big|
	 &\lesssim \langle t-r\rangle^{-1}\sum_{|I|\le N-7}\left\|S\Gamma^{I}\Psi\right\|_{L_{x}^{\infty}}\\
	 &+ \langle t-r\rangle^{-1}\sum_{|I|\le N-7}\left\|\Gamma\Gamma^{I}\Psi\right\|_{L_{x}^{\infty}}
	 \lesssim \varepsilon\langle t\rangle^{-\frac{1}{2}+\delta}\langle t-r\rangle^{-1}.
	 \end{aligned}
	 \end{equation*}
	 
	 Second, using again~\eqref{equ:Hiddenpsi-},~\eqref{est:pointwiseSG} and the definition of $\Psi$ in~\S\ref{Se:Hidden}, we have 
	 \begin{equation*}
	 \begin{aligned}
	 \sum_{|I|\le N-7}\big|[\widehat{{\Gamma}}^{I}\psi]_-\big|
	 &\lesssim \langle t+r\rangle^{-1}\sum_{|I|\le N-7}\left\|S\Gamma^{I}\Psi\right\|_{L_{x}^{\infty}}\\
	 &+ \langle t+r\rangle^{-1}\sum_{|I|\le N-7}\left\|\Gamma\Gamma^{I}\Psi\right\|_{L_{x}^{\infty}}
	 \lesssim \varepsilon\langle t+r\rangle^{-\frac{3}{2}+\delta}.
	 \end{aligned}
	 \end{equation*}
	 
	 Third, from~\eqref{est:smallness},~\eqref{est:KGLINI} and~\eqref{est:N3N4}, we find
	 \begin{equation*}
	 \begin{aligned}
	 \sum_{|I|\le N-6}\left|\Gamma^{I}\tilde{v}\right|&\lesssim \left(\varepsilon+C\varepsilon^{2}+C^{3}\varepsilon^{3}\right)\langle t+r\rangle^{-1}.
	 \end{aligned}
	 \end{equation*}
	 
	 On the other hand, from~\eqref{est:hatGG} and~\eqref{est:pointGlobal}, we see that 
	 \begin{equation*}
	 \sum_{|I|\le N-6}\big|\Gamma^{I}\big(\psi^{*}\gamma^{0}\psi\big)\big|\lesssim \sum_{\substack{|I_{1}|\le N-4\\ |I_{2}|\le N-4}}\big|\widehat{{\Gamma}}^{I_{1}}\psi\big|\big|\widehat{{\Gamma}}^{I_{2}}\psi\big|\lesssim C^{2}\varepsilon^{2}\langle t+r\rangle^{-1}.
	 \end{equation*}
	 
	 Combining the above two estimates with the definition of $\tilde{v}$ in \S\ref{Se:Hidden}, we obtain
	 \begin{equation*}
	 \sum_{|I|\le N-6}\left|\Gamma^{I}{v}\right|\lesssim \sum_{|I|\le N-6}\left(\left|\Gamma^{I}\tilde{v}\right|+\big|\Gamma^{I}\big(\psi^{*}\gamma^{0}\psi\big)\big|\right)\lesssim \left(\varepsilon+C^{2}\varepsilon^{2}+C^{3}\varepsilon^{3}\right)\langle t+r\rangle^{-1}.
	 \end{equation*}
	 These strictly improve the pointwise estimates of $(\psi,v)$ in the bootstrap assumption~\eqref{est:Bootpsi}-\eqref{est:Bootv} for $C$ large enough and $\varepsilon$ small enough.

	 \smallskip
	 \textbf{Step 6.} Conclusion. As a consequence of improving all the estimates in the bootstrap assumption \eqref{est:Bootpsi}-\eqref{est:Bootv}, for any initial data $(\psi_{0},\vec{v}_{0})$ satisfying \eqref{est:smallness}, we conclude that $T^{*}(\psi_{0},\vec{v}_{0})=\infty$. The proof of Proposition~\ref{pro:main} is complete.
	\end{proof}

\subsection{End of the proof of Theorem~\ref{thm:main}}\label{Se:Endthm}

We are in a position to complete the proof of Theorem~\ref{thm:main}.

\begin{proof}[End of the proof of Theorem~\ref{thm:main}]
	Note that, the global existence of $(\psi,v)$ and sharp pointwise decay~\eqref{est:thmpoint} are consequences of the Proposition~\ref{pro:main}. What is left is to show that the solution $(\psi,v)$ scatters to a free solution in the energy space $\mathcal{X}_{N}$.
	We split the proof of scattering as the following three steps.
	
	\smallskip
	\textbf{Step 1.} Linear theory of the Dirac and Klein-Gordon equations. 
	Recall that, for the Cauchy problem of the 2D linear Dirac equation,
	\begin{equation*}
	-i\gamma^{\mu}\partial_{\mu}\phi=G\quad\mbox{with}\quad \phi(0,x)=\phi_{0},
	\end{equation*}
	the solution $\phi$ is given by 
	\begin{equation*}
	\phi(t,x)=e^{-it\mathcal{D}}\phi_{0}+\int_{0}^{t}e^{-i(t-s)\mathcal{D}} \gamma^0 G(s)\d s,
	\end{equation*}
	where $e^{-it\mathcal{D}}$ can be written as 
	\begin{equation*}
	e^{-it\mathcal{D}}=\cos (t\sqrt{-\Delta})-i\frac{\sin (t\sqrt{-\Delta})}{\sqrt{-\Delta}}\mathcal{D},\quad \mbox{with}\ 
	\mathcal{D}=-i\gamma^{0}\gamma^{1}\partial_{1}-i\gamma^{0}\gamma^{2}\partial_{2}.
	\end{equation*}
	
	Recall also that, for the Cauchy problem of the 2D linear Klein-Gordon equation,
	\begin{equation*}
	-\Box u+u=F\quad\mbox{with}\quad (u,\pt u)_{|t=0}=(u_{0},u_{1}),
	\end{equation*}
	the solution $\vec{u}=(u,\pt u)$ is given by 
	\begin{equation*}
	\vec{u}(t,x)=\mathcal{S}(t)
	\left(\begin{aligned}
	u_{0}\\ u_{1}
	\end{aligned}\right)+\int_{0}^{t}\mathcal{S}(t-s)
	\left(\begin{aligned}
	&0\\ F&(s)
	\end{aligned}\right)\d s,
	\end{equation*}
	where $\mathcal{S}(t)$ can be written as 
	\begin{equation*}
	\mathcal{S}(t)=
	\begin{pmatrix}
	\cos \left(t\langle \nabla \rangle\right)& \langle \nabla \rangle^{-1}\sin  \left(t\langle \nabla\rangle\right)\\
	 -\langle \nabla \rangle\sin  \left(t\langle \nabla\rangle\right)& \cos \left(t\langle \nabla \rangle\right)
	\end{pmatrix}.
	\end{equation*}
	By a standard energy argument, we know that the Dirac flow $e^{-it\mathcal{D}}$ and the Klein-Gordon flow $\mathcal{S}(t)$ are unitary group on ${H}^{M-1}$ and $H^{M}\times H^{M-1}$ for $M\in \mathbb{N}^{+}$, respectively. More precisely, for any $M\in \mathbb{N}^{+}$, $f\in H^{M}$ and $\vec{f}\in \mathcal{H}^{M}$, we have 
	\begin{equation}\label{est:freeflow}
	\left\|e^{it\mathcal{D}}\right\|_{H^{M}\to H^{M}}=\left\|S(t)\right\|_{\mathcal{H}^{M}\to \mathcal{H}^{M}}=1,
	\end{equation}
	where $\mathcal{H}^{M}=H^{M}\times H^{M-1}$ for $M\in \mathbb{N}^{+}$. Recall that, for any $(t,s)\in (0,\infty)^{2}$, we have the following basic property of the operators,
	\begin{equation}\label{equ:operatorts}
	e^{-i(t+s)\mathcal{D}}=e^{-it\mathcal{D}}e^{-is\mathcal{D}}\quad \mbox{and}\quad 
	\mathcal{S}(t+s)=\mathcal{S}(t)\mathcal{S}(s).
	\end{equation}
	
	\smallskip
	{\textbf{{Step 2.}}} Scattering of $(\widetilde{\psi},\tilde{v})$. From \eqref{est:LtLxQ0}, \eqref{est:LtLxN1N2} and~\eqref{est:LtLxN3N4}, we see that 
	\begin{equation}\label{est:LtLxinfty}
	\begin{aligned}
	\int_{0}^{\infty}\left(\left\|\mathcal{N}_{3}(\psi,v)\right\|_{H^{N-1}}+\left\|\mathcal{N}_{4}(\psi,\psi^{*})\right\|_{H^{N-1}}\right)\d s &\lesssim C^{2}\varepsilon^{2},\\
	\int_{0}^{\infty}\left(\left\|\mathcal{N}_{1}(\psi,v)\right\|_{H^{N-1}}+\left\|\mathcal{N}_{2}(\psi,\psi^{*})\right\|_{H^{N-1}}+\left\|Q_{0}(\psi,v)\right\|_{H^{N-1}}\right)\d s &\lesssim C^{2}\varepsilon^{2}.
	\end{aligned}
	\end{equation}
	We set 
	\begin{equation*}
	\psi_{0\ell}=\widetilde{\psi}(0)+\int_{0}^{\infty}e^{is\mathcal{D}}\left(\mathcal{N}_{1}(\psi,v)+\mathcal{N}_{2}(\psi,\psi^{*})+2Q_{0}(\psi,v)\right)\d s,
	\end{equation*}
	\begin{equation*}
	\left(\begin{aligned}
	v_{0\ell}\\ v_{1\ell}
	\end{aligned}\right)=	\begin{pmatrix}
\tilde{v}\\ \pt \tilde{v}
	\end{pmatrix}(0)+\int_{0}^{\infty}\mathcal{S}(-s)\begin{pmatrix}
	0\\ \mathcal{N}_{3}(\psi,v)+\mathcal{N}_{4}(\psi,\psi^{*})
	\end{pmatrix}\d s.
	\end{equation*}
	Therefore, from~\eqref{equ:HiddenPsi},~\eqref{equ:Hiddenv},~\eqref{est:freeflow} and~\eqref{equ:operatorts}, we see that
	\begin{equation*}
	\begin{aligned}
	&\big\|\widetilde{\psi}(t)-e^{-it\mathcal{D}}\psi_{0\ell}\big\|_{H^{N-1}}\\
	&\le \int_{t}^{\infty}\left(\left\|\mathcal{N}_{1}(\psi,v)\right\|_{H^{N-1}}+\left\|\mathcal{N}_{2}(\psi,\psi^{*})\right\|_{H^{N-1}}+\left\|Q_{0}(\psi,v)\right\|_{H^{N-1}}\right)\d s,
	\end{aligned}
	\end{equation*} 
	\begin{equation*}
	\left\|	\begin{pmatrix}
	\tilde{v}\\ \pt \tilde{v}
	\end{pmatrix}(t)-\mathcal{S}(t)	\left(\begin{aligned}
	v_{0\ell}\\ v_{1\ell}
	\end{aligned}\right)\right\|_{\mathcal{H}^{N}}\le \int_{t}^{\infty}\left(\left\|\mathcal{N}_{3}(\psi,v)\right\|_{H^{N-1}}+\left\|\mathcal{N}_{4}(\psi,\psi^{*})\right\|_{H^{N-1}}\right)\d s.
	\end{equation*}
	Based on the above estimates and~\eqref{est:LtLxinfty}, we have 
	\begin{equation*}
	\lim_{t\to \infty}\big\|\widetilde{\psi}(t)-e^{-it\mathcal{D}}\psi_{0\ell}\big\|_{H^{N-1}}=\lim_{t\to \infty}	\left\|	\begin{pmatrix}
	\tilde{v}\\ \pt \tilde{v}
	\end{pmatrix}(t)-\mathcal{S}(t)	\left(\begin{aligned}
	v_{0\ell}\\ v_{1\ell}
	\end{aligned}\right)\right\|_{\mathcal{H}^{N}}=0,
	\end{equation*}
	which means that $(\widetilde{\psi},\tilde{v})$ scatters to a free solution in the energy space $\mathcal{X}_{N}=H^{N-1}\times H^{N}\times H^{N-1}$.
	
	\smallskip
	{\textbf{Step 3.}} Conclusion. Note that, from~\eqref{est:hatGa},~\eqref{est:Bootpsi},~\eqref{est:Bootv},~\eqref{est:pointGlobal} and $N\ge 14$, we have 
	\begin{equation*}
	\begin{aligned}
	\left\|i\gamma^{\mu}\partial_{\mu}(v\psi)\right\|_{H^{N-1}}
	&\lesssim \sum_{|I|\le N}\left\|\Gamma^{I}(v\psi)\right\|_{L_{x}^{2}}\\
	&\lesssim \sum_{\substack{|I_{1}|\le N\\ |I_{2}|\le N-4}}\left\|\Gamma^{I_{1}}v\right\|_{L_{x}^{2}}
	\big\|\widehat{\Gamma}^{I_{2}}\psi\big\|_{L_{x}^{\infty}}+\sum_{\substack{|I_{1}|\le N-6\\ |I_{2}|\le N}}\left\|\Gamma^{I_{1}}v\right\|_{L_{x}^{\infty}}
	\big\|\widehat{\Gamma}^{I_{2}}\psi\big\|_{L_{x}^{2}}\\
	&\lesssim C^{2}\varepsilon^{2}\langle t\rangle^{-\frac{1}{2}+2\delta}
	+C^{2}\varepsilon^{2}\langle t\rangle^{-1+\delta},
	\end{aligned}
	\end{equation*}
	\begin{equation*}
	\begin{aligned}
	&\left\|\psi^{*}\gamma^{0}\psi\right\|_{H^{N}}+\left\|\pt (\psi^{*}\gamma^{0}\psi)\right\|_{H^{N-1}}\\
	&\lesssim \sum_{|I|\le N}\left\|\Gamma^{I}\left(\psi^{*}\gamma^{0}\psi\right)\right\|_{L_{x}^{2}}\lesssim \sum_{\substack{|I_{1}|\le N\\ |I_{2}|\le N-4}}\big\|\widehat{{\Gamma}}^{I_{1}}\psi\big\|_{L_{x}^{2}}\big\|\widehat{{\Gamma}}^{I_{2}}\psi\big\|_{L_{x}^{\infty}}\lesssim C^{2}\varepsilon^{2}\langle t\rangle^{-\frac{1}{2}+\delta}.
	\end{aligned}
	\end{equation*}
	Based on the above two estimates, we have 
	\begin{equation}\label{est:limenergy}
	\lim_{t\to \infty}\left\|\left(i\gamma^{\mu}\partial_{\mu}(v\psi),\psi^{*}\gamma^{0}\psi,\pt (\psi^{*}\gamma^{0}\psi)\right)\right\|_{\mathcal{X}_{N}}=0.
	\end{equation}
	Gathering~\eqref{est:limenergy} and Step 2, we have shown that the solution $(\psi,v)$ scatters to a free solution in the energy space $\mathcal{X}_{N}$.
	\end{proof}

\section*{Acknowledgements}
The author S.D. owes great thanks to Zoe Wyatt (KCL) for helpful discussions.

\end{document}